\title[Multiplicity of periodic orbits and homoclinics near critical levels]{On the multiplicity of periodic orbits and homoclinics near critical energy levels of Hamiltonian systems in $\mathbb{R}^4$}
\author{Naiara V. de Paulo}
\address[Naiara V. de Paulo]{Universidade Federal de Santa Catarina,  Departamento de Mate\-m\'atica -- Rua Pomerode, 710 - Salto do Norte - Blumenau SC, Brazil 89065-300.}
\email{naiara.vergian@ufsc.br}
\author{Pedro A. S. Salom\~ao}
\address[Pedro A. S. Salom\~ao]{Universidade de S\~ao Paulo,  Instituto de Matem\'atica e Estat\'istica -- Departamento de Matem\'atica, Rua do Mat\~ao, 1010 - Cidade Universit\'aria - S\~ao Paulo SP, Brazil 05508-090.}
\email{psalomao@ime.usp.br}
\newcommand{\R}{\mathbb{R}}
\newcommand{\Z}{\mathbb{Z}}
\newcommand{\N}{\mathbb{N}}
\newcommand{\A}{\mathcal{A}}
\newcommand{\F}{\mathcal{F}}
\newcommand{\U}{\mathcal U}
\newcounter{newcounter}[section]
\numberwithin{equation}{section}
\numberwithin{newcounter}{section}
\numberwithin{figure}{section}
\numberwithin{footnote}{section}
\newtheorem{definition}[newcounter]{Definition}
\newtheorem{lem}[newcounter]{Lemma}
\newtheorem{prop}[newcounter]{Proposition}
\newtheorem{theo}[newcounter]{Theorem}
\begin{document}

\begin{abstract} We study two-degree-of-freedom Hamiltonian systems. Let us assume that the zero energy level of a real-analytic Hamiltonian function $H:\R^4 \to \R$ contains a saddle-center equilibrium point lying in a strictly convex sphere-like singular subset $S_0\subset H^{-1}(0)$. From previous work \cite{PS} we know that for any small energy $E>0$, the energy level $H^{-1}(E)$ contains a closed $3$-ball $S_E$ in a neighborhood of $S_0$ admitting a singular foliation called $2-3$ foliation. One of the binding orbits of this singular foliation is the Lyapunoff orbit $P_{2,E}$ contained in the center manifold of the saddle-center. The other binding orbit lies in the interior of $S_E$ and spans a one parameter family of disks transverse to the Hamiltonian vector field. In this article we show that the $2-3$ foliation forces the existence of infinitely many periodic orbits and infinitely many homoclinics to $P_{2,E}$ in $S_E$. Moreover, if the branches of the stable and unstable manifolds of $P_{2,E}$ inside $S_E$ do not coincide then the Hamiltonian flow on $S_E$ has positive topological entropy. We also present applications of these results to some classical Hamiltonian systems.
\end{abstract}

%

\maketitle

\tableofcontents

\section{Introduction}
Various phenomena in nature can be modeled by two-degree-of-freedom Hamiltonian systems. Due to the preservation of energy, one usually restricts the study of the dynamics to a fixed $3$-dimensional energy level. Global surfaces of sections, when they exist, provide additional reduction of the flow to an area preserving surface map. When the global surfaces of section are not available, one may still search for the so called systems of transversal sections as introduced by Hofer, Wysocki and Zehnder in \cite{fols}. These systems are singular foliations of the energy level whose singular set is formed by finitely many periodic orbits called bindings. The regular leaves are punctured embedded surfaces foliating the complement of the bindings. They are transverse to the Hamiltonian vector field and are asymptotic to the bindings at the punctures. Systems of transversal sections  may determine transition maps between some regular leaves and valuable information about the dynamics may be extracted from standard tools in surface discrete dynamics.

In this paper we study real-analytic Hamiltonian systems in $\R^4$ admitting a special type of system of transversal sections, the so called $2-3$ foliation. This is a singular foliation of a closed $3$-ball $B$ with precisely two bindings. One of the bindings is hyperbolic and lies in the boundary $\partial B$. It separates $\partial B$ into two hemispheres which are regular leaves of the foliation. The other binding lies in the interior of $B$ and spans a  family of disk-like regular leaves parameterized by an open interval. At the ends of this open interval, the family of disks `breaks' into the union of a cylinder connecting the two bindings and one of the hemispheres of $\partial B$, depending on the end of the interval, see Figure \ref{fig_23} below.

A $2-3$ foliation is proved to exist in \cite{PS} for small positive energies. More precisely, the zero energy level of the Hamiltonian function is supposed to contain a strictly convex sphere-like singular subset $S_0$ with a unique singularity corresponding to a saddle-center equilibrium point. Then for every small positive energy, the energy level contains a closed $3$-ball $S_E$ in a neighborhood of $S_0$ and $S_E$ admits a $2-3$ foliation, see \cite[Theorem 1.9]{PS}. The binding  $P_{2,E} \subset \partial S_E$ coincides with the Lyapunoff orbit in the center manifold of the saddle-center. It has Conley-Zehnder index $2$ and represents an obstruction for the existence of a disk-like global surface of section as constructed in \cite{convex}, see also \cite{HS1}.

The main result in this article asserts that the $2-3$ foliation obtained in \cite{PS} forces the existence of infinitely many periodic orbits and infinitely many homoclinics to $P_{2,E}$ inside $S_E$. We split the proof in two cases: if the branches of the stable and unstable manifolds of $P_{2,E}$ coincide then part of the dynamics inside $S_E$ is reduced to a homeomorphism of an open punctured disk which preserves a finite area and has infinite twist near the punctures. Infinitely many periodic orbits are thus derived from a generalization of the Poincar\'e-Birkhoff fixed point theorem due to Franks \cite{franks1}. If the invariant branches of the stable and unstable manifolds do not coincide then the real-analyticity of the Hamiltonian is used to find infinitely many transverse homoclinics.  In particular, there exist horseshoe-type subsystems implying positivity of the topological entropy. Finally, we apply our results to a couple of Hamiltonian systems arising in celestial mechanics and nanotechnology.

\section{Main result}
Let $H:\R^4 \to \R$ be a real-analytic function. Consider coordinates $(x_1,x_2,y_1,y_2)$ in $\R^4$ and let
$$
\omega_0= \sum_{i=1,2} dy_i \wedge dx_i
$$ be the standard symplectic form. The Hamiltonian vector field associated to $H$ is defined by
$$
i_{X_H}\omega_0 = -dH,
$$ and so it is given by  $X_H= J_0 \nabla H,$ where $J_0$ is the usual symplectic matrix $$J_0 = \left(\begin{array}{cc} 0 & {\rm I}_{2 \times 2}\\ -{\rm I}_{2 \times 2} & 0\end{array} \right) $$and $\nabla H$ is the gradient of $H$.

Let us assume that the energy level $H^{-1}(0)$ contains a saddle-center equilibrium point $p_c$. This is a critical point of $H$ so that the matrix $J_0 {\rm Hess}H(p_c)$ has a pair of real eigenvalues $\pm \alpha$ and a pair of purely imaginary eigenvalues $\pm \omega i$ for some $\alpha,\omega>0$. Here ${\rm Hess}H$ is the Hessian matrix of $H$.

By a theorem of J. Moser \cite{moser} and a refinement of H. R\"ussmann \cite{Russ} (see also Delatte \cite{del}) there exists a real-analytic symplectic change of coordinates $(x_1,x_2,y_1,y_2)=\varphi(q_1,q_2,p_1,p_2)\in \R^4,$ defined on neighborhoods $V,U\subset \R^4$ of $0,p_c$, respectively, so that
$$
\varphi^*\omega_0 = \sum_{i=1,2} dp_i \wedge dq_i,
$$ and, up to adding a constant to $H$ and multiplying it by $-1$, the Hamiltonian function $\varphi^*H$ takes the form
\begin{equation}\label{eq_normalform} K(q_1,q_2,p_1,p_2) = \bar K(I_1,I_2) = -\alpha I_1 + \omega I_2 + R(I_1,I_2),\end{equation}
where $I_1=q_1p_1$, $I_2=\frac{q_2^2+p_2^2}{2}$ and $R(I_1,I_2)=O(I_1^2+I_2^2)$.
According to our conventions, this last identity means that there exists $C>0$ so that $|R(I_1,I_2)| \leq C (I_1^2+I_2^2)$ on a small neighborhood of $(0,0)$.
The local coordinates $(q_1,q_2,p_1,p_2)$ near $p_c$ are called Moser's coordinates.

The trajectories of $\dot z= J_0\nabla K (z)$, $z=(q_1,q_2,p_1,p_2)$,  are given by
\begin{equation}\label{eqK}
\left\{\begin{aligned} & q_1(t)  =q_1(0)e^{ -\bar \alpha t},\\ & p_1(t) =p_1(0)e^{ \bar \alpha t},\\ & q_2(t) + ip_2(t) = (q_2(0) + ip_2(0))e^{-i \bar \omega t}, \end{aligned} \right.
\end{equation}
and project to the planes $(q_1,p_1)$ and $(q_2,p_2)$ as in Figure \ref{fig_selacentro}. Here
$$\begin{aligned} \bar \alpha(I_1,I_2) & := -\partial_{I_1} \bar K(I_1,I_2) = \alpha -
\partial_{I_1} R(I_1,I_2),\\\bar \omega(I_1,I_2) & := \partial_{I_2} \bar K(I_1,I_2) = \omega +
\partial_{I_2} R(I_1,I_2),\end{aligned}$$ are constant along the solutions of \eqref{eqK} since $I_1$ and $I_2$ are first integrals of the flow.

\begin{figure}[h!!]
  \centering
  \includegraphics[width=0.7\textwidth]{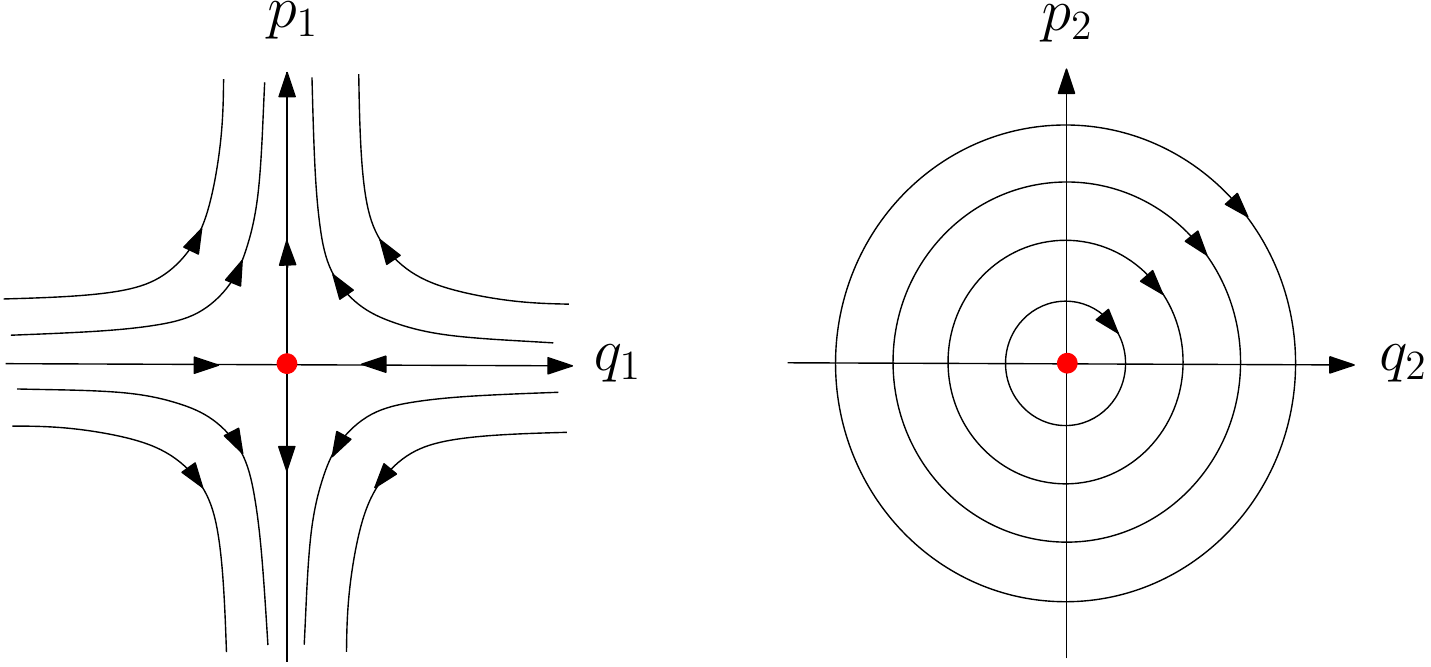}
  \caption{Local behavior of the flow near a saddle-center equilibrium point projected to the planes $(q_1, p_1)$ and $(q_2, p_2)$, respectively.}
  \label{fig_selacentro}
\hfill
\end{figure}

The projection of the energy levels to the $(q_1,p_1)$-plane is depicted in Figure \ref{fig_projecao}. The critical energy level $K^{-1}(0)$ in these local coordinates projects to the first and third quadrants of the plane $(q_1, p_1)$. Without loss of generality, we focus on the subset of the critical level projecting to the first quadrant.
\begin{figure}[ht!!]
  \centering
  \includegraphics[width=0.78\textwidth]{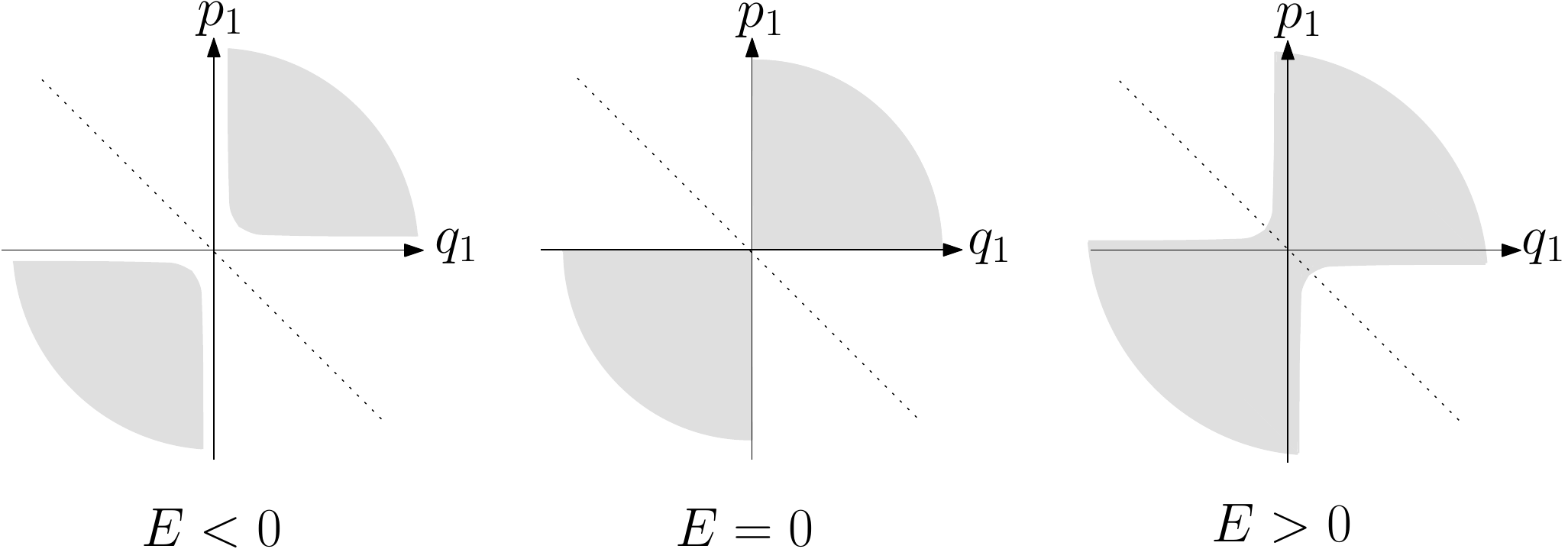}
  \caption{The projections of $K^{-1}(E)$ to the plane $(q_1,p_1)$ for $E<0$, $E=0$ and $E>0$, respectively.  }
  \label{fig_projecao}
\hfill
\end{figure}
For all $\delta>0$ small we let $N_0^\delta \subset K^{-1}(0)$ be the embedded $2$-sphere \cite[Lemma 1.4]{PS} defined by $$N_0^\delta:=K^{-1}(0)\cap \{q_1+p_1 =\delta\}.$$ Then $N_0^\delta$ bounds a topological closed $3$-ball $B_0^\delta \subset K^{-1}(0)$ given  by $$B_0^\delta:=\bigcup_{0\leq \epsilon \leq \delta} N_0^{\epsilon}=K^{-1}(0) \cap \{0\leq q_1+p_1 \leq\delta\}.$$ Notice that $B_0^\delta$ contains the saddle-center equilibrium as its center.

Our first global assumption on the Hamiltonian function $H$ is that  $\varphi(N_0^\delta)$ is also the boundary of an embedded closed $3$-ball $B_\delta \subset H^{-1}(0)$ which contains only regular points of $H$. We obtain what we call a sphere-like singular subset $$S_0:= \varphi(B_0^\delta) \cup B_\delta \subset H^{-1}(0),$$ which is homeomorphic to a $3$-sphere and contains only regular points of $H$, except for the saddle-center $p_c$ as its unique singularity. See Figure \ref{fig_nreg}.

\begin{figure}[ht!!]
  \centering
  \includegraphics[width=0.36\textwidth]{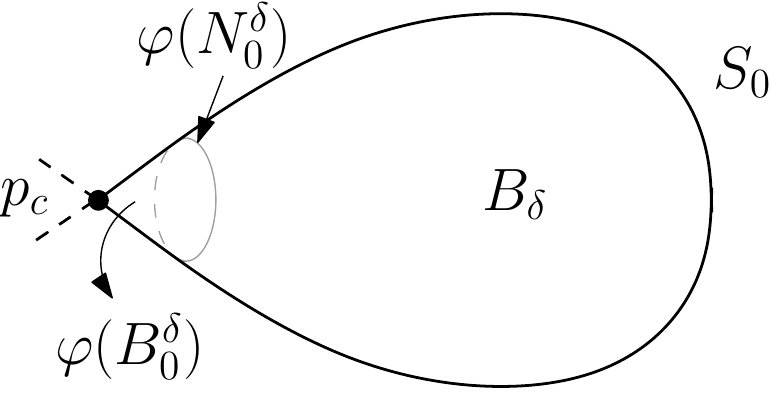}
  \caption{The sphere-like singular subset $S_0=\varphi(B_0^\delta) \cup B_\delta \subset H^{-1}(0).$ }
  \label{fig_nreg}
\hfill
\end{figure}

It follows from the assumptions on $S_0$ that, for all energies $E>0$ sufficiently small, the energy level $H^{-1}(E)$ contains a subset $S_E$ close to $S_0$ which is diffeomorphic to the closed $3$-ball and whose boundary is given in local coordinates by $$\varphi^{-1}(\partial S_E) = K^{-1}(E) \cap \{q_1+p_1 = 0\}.$$

From now on we shall identify the points $U \simeq V$ under the change of coordinates $\varphi$ in order to simplify the notation.  Locally, the projection of $S_E$ to the $(q_1,p_1)$-plane lies in $\{q_1+p_1\geq 0\}$. Notice that $\partial S_E$ contains the periodic orbit $$P_{2,E}:=K^{-1}(E) \cap \{q_1=p_1=0\},$$ called Lyapunoff orbit, which lies in the center manifold of $p_c$. See Figure \ref{fig_se}. It is well known that $P_{2,E}$ is hyperbolic inside its energy level $H^{-1}(E)$ and its Conley-Zehnder index equals $2$, see \cite[Proposition 4.5]{PS}.

\begin{figure}[ht!!]
  \centering
  \includegraphics[width=0.42\textwidth]{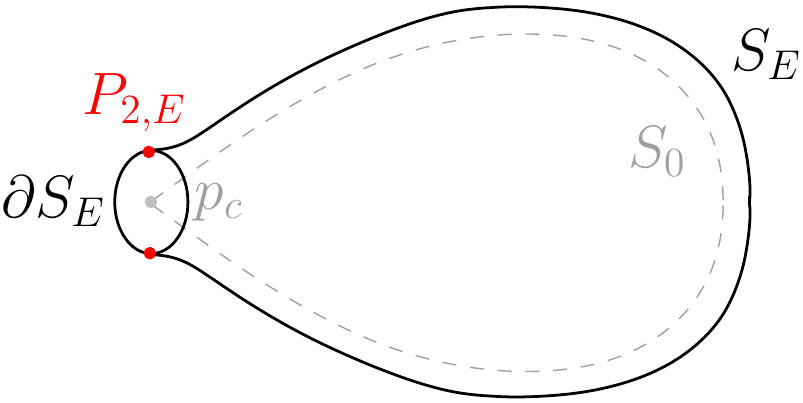}
  \caption{The embedded closed $3$-ball $S_E \subset H^{-1}(E),$ $E>0$ small.}
  \label{fig_se}
\hfill
\end{figure}

Since $\partial_{I_2} \bar K(0,0) = \omega \neq 0$, we can write $$I_2=I_2(I_1,E)=\frac{E}{\omega} +\frac{\alpha}{\omega}I_1 + O(I_1^2+E^2),$$ for $|E|,|I_1|$ sufficiently small with $I_2(0,0)=0$. Thus the periodic orbit $P_{2,E}$ is given by $$q_1=p_1=0, \ \ q_2^2+p_2^2 =2I_2(0,E)=2\frac{E}{\omega}+O(E^2).$$ The Hamiltonian period of $P_{2,E}$ is given by $$T^H_{2,E}= \frac{2\pi}{\bar \omega(0,I_2(0,E))},$$ where $\bar \omega(0,I_2(0,E)) = \partial_{I_2}\bar K(0,I_2(0,E)) = \omega + O(E).$

We are interested in studying the Hamiltonian dynamics on the $3$-ball $S_E\subset H^{-1}(E)$.  Consider the hemispheres of $\partial S_E$ given by $$\begin{aligned}U_{1,E} & = K^{-1}(E) \cap \{q_1+p_1=0,q_1 < 0\},\\ U_{2,E} & = K^{-1}(E) \cap \{q_1+p_1=0,q_1 > 0\},\end{aligned}$$ so that $\partial S_E = U_{1,E} \cup P_{2,E} \cup U_{2,E}.$ See Figure \ref{esfera}.

\begin{figure}[ht!!]
  \centering
  \includegraphics[width=0.75\textwidth]{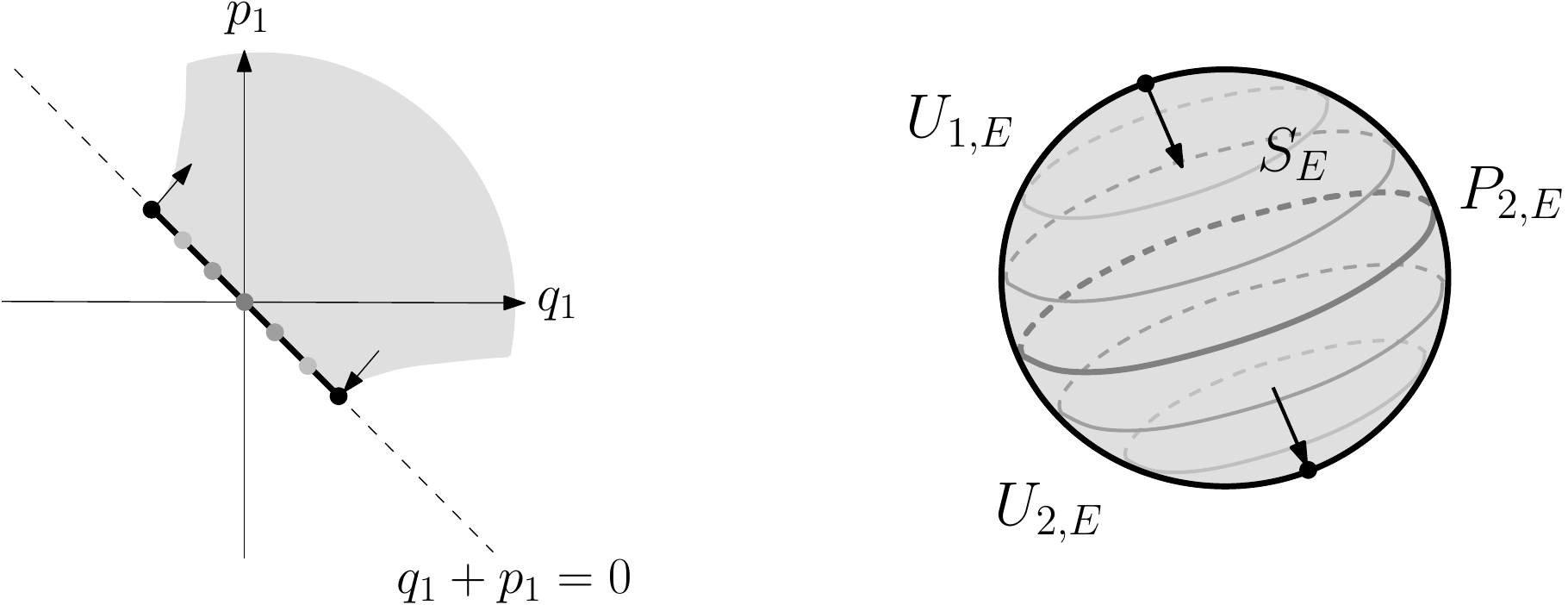}
  \caption{The embedded $2$-sphere $\partial S_E = U_{1,E} \cup P_{2,E} \cup U_{2,E} $ and its projection to the plane $(q_1,p_1)$.}
  \label{esfera}
\hfill
\end{figure}

The stable and unstable manifolds of $P_{2,E}$ are denoted by $W^s(P_{2,E})$ and $W^u(P_{2,E})$, respectively. Locally, such manifolds are given by $$\begin{aligned}W^s_{\rm loc}(P_{2,E}) := \{p_1=0, q_2^2+p_2^2 = 2I_2(0,E)\},\\  W^u_{\rm loc}(P_{2,E}):= \{q_1=0, q_2^2+p_2^2 = 2I_2(0,E)\}. \end{aligned}$$ Let us consider only the branches of $W^s_{\rm loc}(P_{2,E})$ and $W^u_{\rm loc}(P_{2,E})$ contained in $\dot S_E:= S_E \setminus \partial S_E$. 

\begin{definition}A $2-3$ foliation of $S_E$ adapted to the Hamiltonian flow is a singular foliation $\F_E$ of $S_E\subset H^{-1}(E)$ so that:
\begin{itemize}
\item[(i)] The singular set of $\F_E$ is formed by $P_{2,E} \subset \partial S_E$ and a periodic orbit $P_{3,E}\subset \dot S_E$, which is unknotted and has Conley-Zehnder index $3$.
\item[(ii)]  $\F_E$ contains the hemispheres $U_{1,E}$ and $U_{2,E}$ in $\partial S_E$  as regular leaves. These leaves are called rigid planes.
\item[(iii)] $\F_E$ contains a cylinder $V_E$ in $\dot S_E$ whose closure has boundary $P_{3,E}\cup P_{2,E}$. $V_E$ is called a rigid cylinder.
\item[(iv)] $\F_E$ contains a one parameter family of planes $D_{\tau,E}, \tau \in (0,1),$ foliating $\dot S_E \setminus (P_{3,E} \cup V_E)$. The closure of each $D_{\tau,E}$ has boundary  $P_{3,E}$. As $\tau \to 0^+$, $D_{\tau,E}$ $C^0$-converges  to $V_E \cup P_{2,E} \cup U_{1,E}$ and, as $\tau \to 1^-$, $D_{\tau,E}$ $C^0$-converges  to $V_E \cup P_{2,E} \cup U_{2,E}$.
\item[(v)] All regular leaves $U_{1,E},U_{2,E},V_E$ and $D_{\tau,E},\tau\in (0,1)$, are transverse to the Hamiltonian vector field $X_H$.
\end{itemize} See Figure \ref{fig_23}.
\end{definition}

\begin{figure}[h!!]
  \centering
  \includegraphics[width=0.5\textwidth]{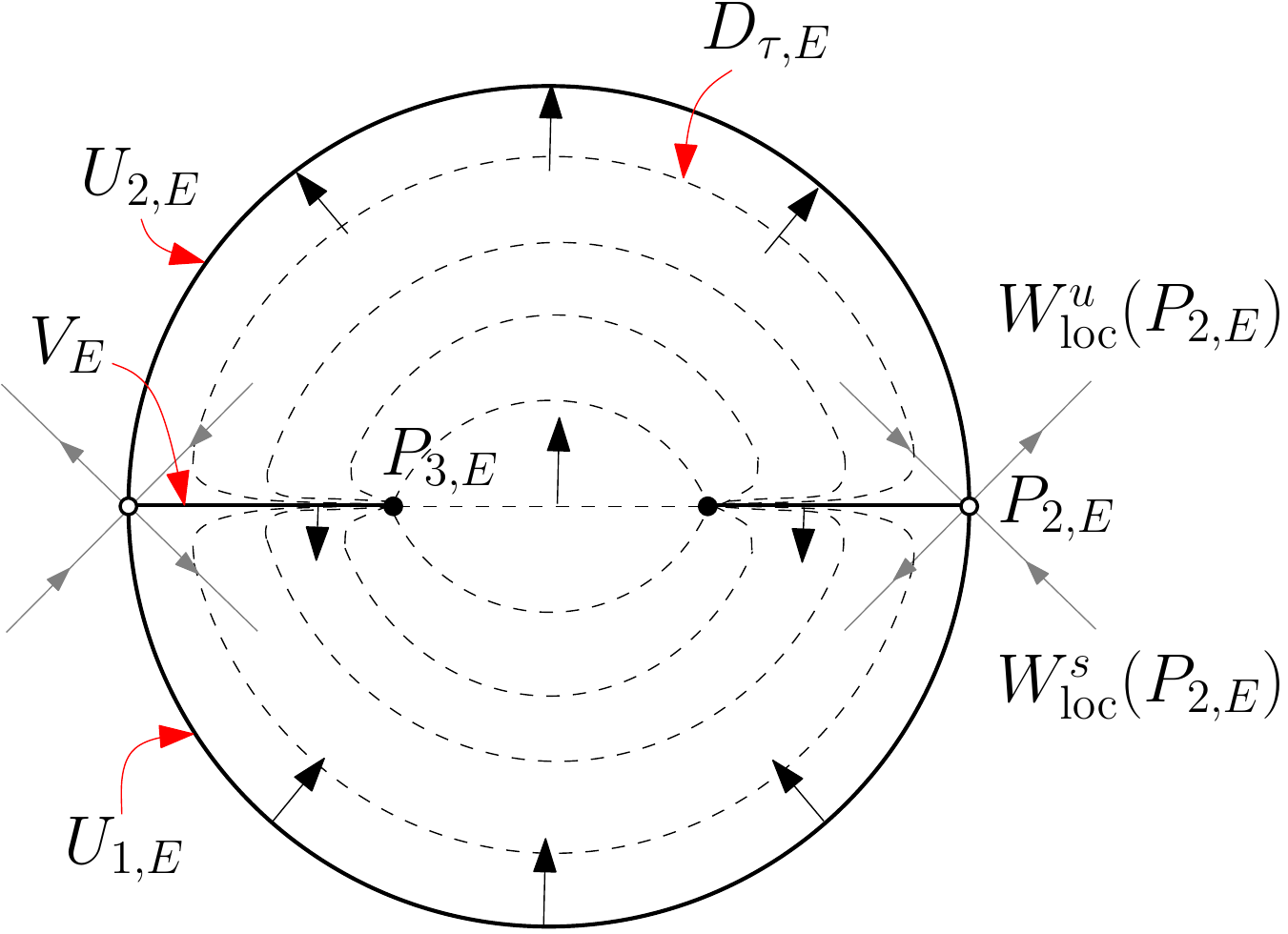}
  \caption{A section of a $2-3$ foliation of $S_E$. The black arrows point to the same direction of the Hamiltonian vector field.}
  \label{fig_23}
\hfill
\end{figure}

\begin{definition}We say that the sphere-like subset $S_0\subset H^{-1}(0)$ containing the saddle-center $p_c$ is a strictly convex singular subset of $H^{-1}(0)$ if $S_0$ bounds a convex subset of $\R^4$ and the Hessian of $H$ restricted to $T\dot S_0$ is everywhere positive-definite. Here $\dot S_0 = S_0 \setminus \{p_c\}$.  \end{definition}

It is proved in \cite{PS} that if $S_0$ is a strictly convex singular subset of $H^{-1}(0)$ then $S_E$ admits a $2-3$ foliation for all $E>0$ sufficiently small.

\begin{theo}\cite[Theorem 1.9 and Corollary 1.10]{PS}\label{teo_PS}
Let $H:\R^4 \to \R$ be a smooth Hamiltonian function. Assume that $H$ admits a saddle-center equilibrium point $p_c\in H^{-1}(0)$ so that, near $p_c$,  $H$ is real-analytic and takes the form \eqref{eq_normalform} for suitable local real-analytic symplectic coordinates.  Assume moreover that $p_c$  lies in a strictly convex singular subset $S_0\subset H^{-1}(0)$. Then, for all $E>0$ sufficiently small, the embedded closed $3$-ball $S_E\subset H^{-1}(E)$ near $S_0$, defined as above, admits a $2-3$ foliation adapted to the Hamiltonian flow. In particular, $P_{2,E}$ admits a homoclinic orbit inside $S_E\setminus \partial S_E$.
\end{theo}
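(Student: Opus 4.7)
The plan is to follow the pseudoholomorphic curves framework developed by Hofer, Wysocki and Zehnder, adapted to the singular convex setting near a saddle-center. First I would exploit the strict convexity of $S_0$ to realize $X_H$ as a positive reparametrization of the Reeb vector field of a contact form on $\dot S_E$, using a Liouville primitive of $\omega_0$ centered at a point inside the convex body bounded by $S_0$. Near $p_c$, Moser's normal form \eqref{eq_normalform} makes the contact structure and Reeb dynamics completely explicit, lets me identify $P_{2,E}$ as a hyperbolic Reeb orbit with $\cz=2$, and gives direct control on the hemispheres $U_{1,E}, U_{2,E}$: they are flow-invariant $2$-disks with boundary $P_{2,E}$, transverse to $X_H$ in their interiors, hence ready-made candidates for the two rigid planes of the foliation.

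Next, to produce the interior binding $P_{3,E}$ and the family of disks $D_{\tau,E}$, I would modify the flow on a neighborhood of $\partial S_E$ so as to cap off $S_E$ into a dynamically convex tight contact $3$-sphere whose Reeb dynamics in the interior coincide with those of $X_H$ on a large subset of $\dot S_E$. On this auxiliary sphere I would start a Bishop family of pseudoholomorphic disks seeded along one of the hemispheres $U_{i,E}$ (or a family of finite-energy planes asymptotic to $P_{2,E}$) and propagate it by the usual implicit function theorem plus SFT compactness. Index and automatic-transversality arguments, combined with the convexity hypothesis, force any limiting broken building to be a simple finite-energy foliation: a second binding orbit $P_{3,E}$ must appear, with odd Conley-Zehnder index equal to $3$; the limit configurations at the two ends of the parameter interval must break into $V_E \cup U_{i,E}$ glued along $P_{2,E}$, producing the rigid cylinder $V_E$ together with the prescribed convergence $D_{\tau,E}\to V_E\cup P_{2,E}\cup U_{i,E}$ for $i=1,2$ as $\tau\to 0^+,1^-$. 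Unknottedness of $P_{3,E}$ follows from the fact that it bounds the family of disks $D_{\tau,E}$.

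For the homoclinic assertion I would look at the breaking of the family $D_{\tau,E}$ as $\tau\to 0^+$. Since the limit configuration is the holomorphic building $V_E \cup P_{2,E}\cup U_{1,E}$, a generic point on $D_{\tau,E}$ close to the neck of the breaking sits on a Hamiltonian trajectory that spends arbitrarily long time near $P_{2,E}$ in both time directions as $\tau\to 0^+$. Passing to the limit, this trajectory converges uniformly on compact sets to an orbit in $\dot S_E$ whose forward and backward $\alpha$- and $\omega$-limits both equal $P_{2,E}$; that is, a homoclinic to $P_{2,E}$ inside $S_E\setminus \partial S_E$. A complementary topological check is that an arc on $D_{\tau,E}$ joining $W^u_{\rm loc}(P_{2,E})\cap \dot S_E$ to $W^s_{\rm loc}(P_{2,E})\cap \dot S_E$ through the foliation cannot leave $S_E$ and must therefore produce a homoclinic intersection in the limit.

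The main obstacle I expect is the analytic step of paragraph two: ensuring that the compactified moduli space of pseudoholomorphic curves underlying the $2-3$ foliation has exactly the combinatorial structure listed in the definition above, with no extraneous bubbles, multiple covers, or additional cascades. The key inputs to control this are the strict convexity of $S_0$, which rules out lower-index breakings by the index formula, and the Moser normal form plus real-analyticity near $p_c$, which give the precise asymptotic behavior of curves at $P_{2,E}$ needed both for the Fredholm computation and for the limiting argument producing the homoclinic.
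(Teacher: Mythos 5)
The foliation-construction part of the theorem is cited from [PS]; neither you nor the paper reproves it here, and your sketch (Bishop family of pseudoholomorphic disks, SFT compactness, convexity and index control, rigid planes from the hemispheres) is broadly consistent in spirit with that reference. The paper does, however, re-derive the homoclinic assertion in Section \ref{sec_existenciahomoc} via Propositions \ref{prop_homoclinic} and \ref{prop_homoclinic2}, and your argument for that part has a genuine gap.

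You claim that a point on $D_{\tau,E}$ near the neck of the breaking lies on a trajectory converging, as $\tau\to 0^+$, to an orbit homoclinic to $P_{2,E}$. That is not justified and, taken literally, fails: points near the neck converge to $P_{2,E}$ itself, so the naive limit (uniform on compact time intervals) of trajectories through them is a piece of $P_{2,E}$ or of the local stable/unstable manifolds, not a homoclinic orbit. More fundamentally, the SFT degeneration of the transverse leaves $D_{\tau,E}$ is a geometric statement about the cross-sections and carries no recurrence information about the return dynamics: with a merely smooth, non-area-preserving flow the same breaking of the foliation would occur, yet the unstable branch of $P_{2,E}$ could escape through $U_{2,E}$ and never return. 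Your "complementary topological check" likewise does not work; joining $W^u_{\rm loc}$ and $W^s_{\rm loc}$ by an arc inside a single transverse leaf does not force the (flow-invariant, $2$-dimensional) stable and unstable manifolds to intersect each other.

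The missing ingredient, and the content of Proposition \ref{prop_homoclinic2}, is symplectic recurrence through the transition maps $\Psi^g$ and $\Psi^l$ induced by the foliation. The disk $B^u_{\tau_0}\subset D_{\tau_0}$ bounded by $W^u_{\rm loc}(P_{2,E})\cap D_{\tau_0}$ and the disk $B^s_{\tau_1}\subset D_{\tau_1}$ bounded by $W^s_{\rm loc}(P_{2,E})\cap D_{\tau_1}$ have the same symplectic area $T_{2,E}>0$. If no forward image $B^u_j:=(\Psi^g\circ\Psi^l)^{j-1}\circ\Psi^g(B^u_{\tau_0})$ ever met $B^s_{\tau_1}$, these images would be pairwise disjoint subsets of the finite-area leaf $D_{\tau_1}$, all with area $T_{2,E}$, a contradiction. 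Hence for some finite $N$ one has $C^u_N\cap C^s_{\tau_1}\neq\emptyset$, and that intersection is the homoclinic. This area-preservation step is not a technical detail you can postpone; it is the reason the conclusion is true.
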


The main result of this paper is the following theorem which asserts that if the Hamiltonian function in Theorem \ref{teo_PS} is real-analytic then one obtains infinitely many periodic orbits and homoclinics to $P_{2,E}$ inside the closed $3$-ball $S_E$.

\begin{theo}\label{maintheo}
Let $H:\R^4 \to \R$ be a real-analytic Hamiltonian function. Assume that $H$ admits a saddle-center equilibrium point $p_c\in H^{-1}(0)$ so that  $H$ takes the form \eqref{eq_normalform} for suitable local real-analytic symplectic coordinates near $p_c$.  Assume moreover that $p_c$  lies in a strictly convex singular subset $S_0\subset H^{-1}(0)$. Let $S_E\subset H^{-1}(E), E>0$ small, be the topological closed $3$-ball near $S_0$, defined as above. Then for all $E>0$ sufficiently small there exist infinitely many periodic orbits and infinitely many homoclinics to $P_{2,E}$ inside $S_E \setminus \partial S_E$. Moreover, if the branches of $W^s(P_{2,E})$ and $W^u(P_{2,E})$ contained in $S_E\setminus \partial S_E$ do not coincide then the Hamiltonian flow restricted to $H^{-1}(E)$ has positive topological entropy. \end{theo}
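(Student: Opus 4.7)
The plan is to use the $2-3$ foliation to reduce the Hamiltonian dynamics on $\dot S_E$ to a two-dimensional discrete system on a suitable section, and then to appeal to classical results for area-preserving surface homeomorphisms and for transverse homoclinics.

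First I would exhibit a global section. The rigid cylinder $V_E$ is transverse to $X_H$, its closure has boundary $P_{2,E}\cup P_{3,E}$, and every orbit in $\dot S_E\setminus(P_{3,E}\cup V_E)$ that neither escapes through $U_{2,E}$ in the future nor through $U_{1,E}$ in the past must hit $V_E$. This follows because the one-parameter family $D_{\tau,E}$, $\tau\in(0,1)$, foliates $\dot S_E\setminus(P_{3,E}\cup V_E)$ and degenerates at the ends to $V_E\cup P_{2,E}\cup U_{1,E}$ and $V_E\cup P_{2,E}\cup U_{2,E}$ respectively, so $X_H$ drives $\tau$ monotonically. The first return map $\psi$ is then defined on the set of points of $V_E$ whose forward orbit hits $V_E$ again before exiting through $U_{2,E}$. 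I would verify that: (i) $\psi$ preserves a finite area on $V_E$ obtained from a primitive of $\omega_0|_{H^{-1}(E)}$; (ii) $\psi$ extends continuously to the boundary circle $P_{3,E}$ as a rigid rotation whose rotation number comes from the elliptic linearization of the index-$3$ orbit $P_{3,E}$; and (iii) the return time of $\psi$ diverges logarithmically as one approaches $W^s(P_{2,E})\cap V_E$, yielding an unbounded (``infinite'') twist near the $P_{2,E}$-side of $V_E$, coming from the hyperbolicity of $P_{2,E}$.

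\textbf{Case 1: the branches of $W^s(P_{2,E})$ and $W^u(P_{2,E})$ in $\dot S_E$ coincide.} Then this common invariant surface closes up the picture: the subset of orbits remaining in $S_E$ for all times is captured by $\psi$ as a homeomorphism of an open punctured disk (topologically a page minus the puncture at $P_{2,E}$), preserving a finite area form and having infinite twist at the puncture. I would then invoke Franks' generalization \cite{franks1} of the Poincar\'e--Birkhoff theorem: for an area-preserving homeomorphism of an open annulus with a periodic orbit and two realized rotation numbers $\rho_1<\rho_2$, every rational in $[\rho_1,\rho_2]$ is realized by a periodic orbit. Since the twist is infinite, we obtain infinitely many periodic orbits of $\psi$, hence infinitely many periodic orbits of $X_H$ in $\dot S_E$. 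Infinitely many homoclinic orbits to $P_{2,E}$ follow by a standard construction within the same machinery: they arise as orbits of $\psi$ whose forward and backward iterates converge to the puncture, equivalently as accumulation points of the family of periodic orbits whose period tends to infinity while their rotation number goes to infinity.

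\textbf{Case 2: the branches do not coincide.} By Theorem \ref{teo_PS}, there exists a homoclinic orbit $\Gamma_0$ to $P_{2,E}$ inside $\dot S_E$. Real-analyticity of $H$ makes $W^s(P_{2,E})$ and $W^u(P_{2,E})$ real-analytic $2$-submanifolds of the analytic $3$-manifold $H^{-1}(E)$; as they are distinct by hypothesis, $W^s(P_{2,E})\cap W^u(P_{2,E})$ is a $1$-dimensional analytic subvariety, namely $P_{2,E}$ together with a collection of homoclinic orbits. A tangency of infinite order between the two analytic surfaces along any such orbit would force them to coincide in a neighborhood, contradicting our assumption; hence at some point of $\Gamma_0$ (or of one of the homoclinics arising in its closure) the intersection is transverse. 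By the Smale--Birkhoff homoclinic theorem, a transverse homoclinic to the hyperbolic orbit $P_{2,E}$ produces a hyperbolic invariant set on which some iterate of the Poincar\'e map is conjugate to the Bernoulli shift on two symbols. This gives positive topological entropy of the flow on $H^{-1}(E)$ and, in particular, infinitely many periodic orbits and infinitely many transverse homoclinics to $P_{2,E}$.

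The hardest steps I foresee are (a) describing precisely the open subsets of $V_E$ on which $\psi$ and $\psi^{-1}$ are defined and, in Case 1, verifying that these sets together with the identified stable/unstable manifold close up topologically into an open punctured disk; (b) proving the infinite-twist condition in the exact form required by \cite{franks1}, which relies on the hyperbolic $\lambda$-lemma and the logarithmic blow-up of return times near a saddle; and (c) in Case 2, ruling out infinite-order analytic tangency of $W^s(P_{2,E})$ and $W^u(P_{2,E})$ along the given homoclinic, which is where real-analyticity (as opposed to mere smoothness) of $H$ becomes essential.
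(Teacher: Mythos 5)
Your overall strategy matches the paper's (reduce to an area-preserving map on a two-dimensional section and split on whether the invariant branches of $P_{2,E}$ coincide), but your Case~2 contains a genuine gap that the paper's argument is specifically designed to avoid. You argue that since $W^s(P_{2,E})$ and $W^u(P_{2,E})$ are distinct real-analytic surfaces, an infinite-order tangency along the homoclinic $\Gamma_0$ is impossible, ``hence at some point of $\Gamma_0$\dots the intersection is transverse.'' This does not follow. Distinct analytic surfaces can meet tangentially along a curve to arbitrary \emph{finite} order without ever being transverse (think of $\{z=0\}$ and $\{z=x^2\}$ in $\R^3$ meeting along the $y$-axis); ruling out infinite-order tangency only tells you the contact is of finite order, not that it is of order zero somewhere. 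The Smale--Birkhoff theorem requires genuine transversality, so your Case~2 is not closed. The paper handles precisely this difficulty: in Section~6 it constructs, in the angle-action chart near $P_{2,E}$, analytic arcs $\gamma_u\subset W^u$ and $\gamma_s\subset W^s$ meeting the circle $S_c$ with possibly only finite-order tangency, and then uses the \emph{infinite twist} of the local transition map (Lemma~5.1, estimate \eqref{c1} vs.\ \eqref{eq2}) to show that $l(\dot\gamma_u)$ spirals around $S_c$ so fast that it must cut $\gamma_s$ transversely infinitely many times near $S_c$. This spiraling argument, going back to Conley and Churchill--Rod, is the actual content needed, and it is absent from your outline; your step~(c) identifies only the easier half of it.

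Secondary comments: your choice of the rigid cylinder $V_E$ as section is not the one the paper uses. The paper works with the two disk-like leaves $D_{\tau_0}$ and $D_{\tau_1}$ near the ends of the family and studies $\Psi^g:D_{\tau_0}\to D_{\tau_1}$ and $\Psi^l:D_{\tau_1}\setminus B^s_{\tau_1}\to D_{\tau_0}\setminus B^u_{\tau_0}$; the ``number $N$'' from Proposition~4.2 is the number of iterates of $\Psi^g\circ\Psi^l$ needed before the image of $W^u$ meets $W^s$, and in Case~1 the paper distinguishes $N>1$ (collapse the $N$ disks to points and apply Franks' theorem for open genus-zero surfaces with a periodic orbit) from $N=1$ (build positively and negatively returning disks and apply Franks' fixed-point theorem for lifts). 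Your appeal to Franks via realized rotation numbers and an assumed periodic orbit on the annulus does not obviously cover the $N=1$ case, where a priori no interior periodic orbit of the return map is available; the returning-disk argument is exactly what supplies the missing periodic points there. Also, in Case~1 you do not need to produce homoclinics as accumulation points of periodic orbits: when the branches coincide, the common surface is filled by a one-parameter family of homoclinics, which is already infinitely many.
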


\section{Applications}

Hamiltonian functions admitting saddle-center equilibrium points are easily found in literature. For instance, if the Hamiltonian $H: \R^4 \to \R$ is written as kinetic plus potential energy
\begin{equation}\label{ecep}
H(x,y,p_x,p_y)=\frac{p_x^2+p_y^2}{2}+V(x,y),
\end{equation}
then $(x_c,y_c) \in \R^2$ is a saddle-type critical point for $V$ if and only if $p_c=(x_c,y_c, 0, 0) \in \R^4$ is a saddle-center equilibrium point for $H$.

In case $H$ has the form \eqref{ecep}, the convexity condition found in Theorems \ref{teo_PS} and \ref{maintheo} can be checked in terms of the potential function $V$ on the corresponding Hill's region.

\begin{prop}[\cite{Sa1}, Theorem 1]\label{teo_convexidade}
Let $H:\R^4\to \R$ be a Hamiltonian function given as in \eqref{ecep}, where $V:\R^2\to \R$ is a smooth function. Let $S\subset H^{-1}(E_0)$ be a sphere-like singular subset of the level set $H^{-1}(E_0)$, with a singularity $p_c$ corresponding to a saddle-center equilibrium point. Let $\pi:\R^4 \to \R^2$ be the projection $\pi(x,y,p_x,p_y)=(x,y)$ and let $B:=\pi(S)$. Then $S$ is a strictly convex singular subset of $H^{-1}(E_0)$ if and only if
\begin{equation}\label{positivo}2(E_0-V)(V_{xx}V_{yy}-V_{xy}^2)+V_{xx}V_y^2+V_{yy}V_x^2-2V_xV_yV_{xy}>0 \end{equation}
for all points in $\pi(S \setminus \{p_c\}) = B\setminus \{\pi(p_c)\}$. This statement also holds if $S$ is diffeomorphic to the $3$-sphere and inequality \eqref{positivo} holds at the disk-like region $B=\pi(S)$.
\end{prop}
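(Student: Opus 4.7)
The plan is to translate strict convexity of $S$ into a pointwise positive definiteness statement for $\text{Hess}\,H$ restricted to $T_zS$, and then, using the kinetic-plus-potential block structure together with the defining equation $|p|^2 = 2(E_0-V)$ of $S$, to collapse that statement into the scalar inequality \eqref{positivo}. First, I recall that a compact hypersurface in $\R^n$ bounding a domain is strictly convex if and only if its second fundamental form with respect to the outward unit normal is positive definite. For a regular level $S \subset H^{-1}(E_0)$ the outward normal is $\nabla H/|\nabla H|$, and a direct differentiation of $\nabla H$ along a tangent vector gives $II(v,v) = |\nabla H|^{-1}\,v^T\text{Hess}\,H\,v$ on $T_zS$. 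Thus strict convexity of $S$ away from $p_c$ is equivalent to $\text{Hess}\,H(z)|_{T_zS}$ being positive definite for every $z\in S\setminus\{p_c\}$.

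Next, I exploit the block structure
$$
\text{Hess}\,H = \begin{pmatrix} H_V & 0 \\ 0 & I_2 \end{pmatrix}, \qquad dH = (V_x,V_y,p_x,p_y),
$$
with $H_V := \text{Hess}\,V$, to reduce the restricted form to a $2\times 2$ quadratic form on the $(x,y)$-plane. At an interior point of the Hill's region $B = \pi(S)$, where $p := (p_x,p_y)\neq 0$, a tangent vector $v = (\xi,\eta)$ satisfies $\nabla V\cdot\xi + p\cdot\eta = 0$; splitting $\eta = -(\nabla V\cdot\xi)\,p/|p|^2 + t\,p^\perp$ with $p^\perp := (-p_y,p_x)$ cancels cross terms and gives
$$
v^T\text{Hess}\,H\,v \;=\; \xi^T H_V\,\xi \;+\; \frac{(\nabla V\cdot\xi)^2}{|p|^2} \;+\; t^2\,|p|^2.
$$
The $t$-part is automatically positive, and using $|p|^2 = 2(E_0-V)$, positive definiteness in $\xi$ becomes positive definiteness of
$$
M(x,y) \;:=\; 2(E_0-V)\,H_V + \nabla V\otimes\nabla V.
$$
At a zero-velocity point $(x,y,0,0)\in S$ (where $V = E_0$) the constraint degenerates to $\nabla V\cdot\xi=0$, and positive definiteness of $\text{Hess}\,H|_{T_zS}$ reduces to $\xi^T H_V\,\xi>0$ along the tangent of $\{V = E_0\}$, which is exactly \eqref{positivo} specialised to $V = E_0$.

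A direct $2\times 2$ expansion gives
$$
\det M \;=\; 2(E_0-V)\bigl[\,2(E_0-V)(V_{xx}V_{yy}-V_{xy}^2) + V_{xx}V_y^2 + V_{yy}V_x^2 - 2V_xV_yV_{xy}\,\bigr],
$$
so on the open Hill's region $\dot B := B\setminus\partial B$ the inequality \eqref{positivo} is equivalent to $\det M > 0$. To upgrade this to full positive definiteness of $M$, I would note that $\text{tr}\,M = 2(E_0-V)\Delta V + |\nabla V|^2$ is strictly positive in a one-sided neighbourhood of $\partial B\setminus\{\pi(p_c)\}$ (where $|\nabla V|>0$ and $E_0-V$ is small), so $M$ is positive definite there. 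Since $\dot B$ is connected ($B$ being disk-like) and a symmetric $2\times 2$ matrix with positive determinant has same-sign eigenvalues, the signature of $M$ cannot change across $\dot B$; positive definiteness near the boundary therefore propagates to all of $\dot B$. The most delicate step is this sign-constancy argument, which requires the disk-like geometry of $B$ for connectedness and a nontrivial smooth boundary arc along which the trace is manifestly positive.
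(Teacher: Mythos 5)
The paper itself does not prove this proposition --- it is quoted from \cite{Sa1} (Theorem 1) --- so your argument can only be measured against that source, not against an internal proof. Your computational core is correct and is the expected reduction: ${\rm Hess}\,H$ is block diagonal, the splitting $\eta=-\frac{(\nabla V\cdot\xi)}{|p|^2}p+t\,p^{\perp}$ at an interior point of the Hill's region reduces positive definiteness of ${\rm Hess}\,H|_{T_zS}$ to positive definiteness of $M=2(E_0-V){\rm Hess}\,V+\nabla V\otimes\nabla V$, the determinant identity you state is right, and you correctly notice the subtle point that \eqref{positivo} at a single interior point only says $\det M>0$; your upgrade via positivity of ${\rm tr}\,M$ near the regular part of $\partial B$ together with connectedness of the interior of the disk-like region is a legitimate and necessary step, and the zero-velocity (boundary) case, where \eqref{positivo} degenerates to $V_{xx}V_y^2+V_{yy}V_x^2-2V_xV_yV_{xy}>0$, is also handled correctly.

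The gap is in your first paragraph. The paper's definition of a strictly convex singular subset has two clauses: positive definiteness of ${\rm Hess}\,H$ on $T\dot S$ \emph{and} the requirement that $S$ bound a convex subset of $\R^4$. Your reduction settles the first clause, but in the direction ``\eqref{positivo} $\Rightarrow$ strictly convex'' you must also produce the second, and the fact you invoke --- that a compact hypersurface bounds a convex domain if and only if its second fundamental form is positive definite --- is both false as an ``only if'' (boundaries of convex bodies need only have positive semidefinite second fundamental form; this is harmless here since the definition contains the Hessian clause explicitly) and, more importantly, not applicable as an ``if'': $S$ is not a smooth closed hypersurface, it has the cone-like singularity at $p_c$, where $H^{-1}(E_0)$ is not regular. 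A Hadamard-type theorem therefore does not directly give that $S$ bounds a convex body, and one needs an additional local argument at the saddle-center (for instance via the quadratic normal form of $H$ at $p_c$, or an approximation/supporting-hyperplane argument across the singular point), which is precisely part of what \cite{Sa1} supplies. Until that is added, the ``if'' direction of the proposition, with strict convexity as the paper defines it, is incomplete; the same remark applies, more mildly, to the final smooth ($S\simeq S^3$) variant, where your appeal to the classical theorem does work.
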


If the Hamiltonian $H: \R^4 \to \R$ has the form
\begin{equation}
H(x,y,p_x,p_y)=\frac{1}{2}\left[(p_x-A_1(x,y))^2+(p_y-A_2(x,y))^2\right]+V(x,y),
\end{equation}
where the magnetic vector potential $A=(A_1,A_2)$ is linear,  then strict convexity of the singular  subset does not depend on $A$ and can  be directly checked using inequality \eqref{positivo} on the corresponding Hill's region $\pi(S \setminus \{p_c\})$. 

In the following sections we present some examples of Hamiltonians of the form kinetic plus potential energy for which our main results apply. In particular, such systems admit infinitely many periodic orbits and homoclinics to the Lyapunoff orbit near the critical energy level containing a saddle-center equilibrium. Motivated by the Allen-Cahn equation, Fusco-Gronchi-Novaga \cite{FGN1,FGN2} use variational methods to study the existence of periodic motions  of kinetic plus potential Hamiltonians.

\subsection{Buckled nanobeams}

The transverse displacement of a nanobeam subjected to a longitudinal compressive stress applied at both ends is studied in \cite{chakra1, CEW}. Under a certain compression, the linearized dynamics over the equilibrium state can be formulated in terms of a two-degree-of-freedom Hamiltonian system, which is obtained by a two-mode Galerkin truncation of the infinite dimensional dynamics described by the Euler-Bernoulli beam equation. The Hamiltonian function describing such a truncated dynamics is given by
\begin{equation}\label{ham_modal_simp}
H(x,y,p_x,p_y)=\frac{p_x^2+p_y^2}{2} + V(x,y),
\end{equation}
where the potential function $V$ has the form
\begin{equation}\label{pot_modal_simp}
V(x,y)=\alpha x^2+4\beta y^2+\frac{1}{2}\left(x^2+4y^2\right)^2
\end{equation}
with real parameters $\alpha<0$ and $\beta \neq 0$. Notice that $V$ is symmetric with respect to $x$ and $y$.

Let us assume that $\beta>0$. In this case, $H$ admits a saddle-center equilibrium  at $p_c:=(0,0,0,0)\in H^{-1}(0)$ corresponding to a saddle of the potential function $V$.  The critical energy level $H^{-1}(0)$ contains a pair of singular subsets $S$ and $S'$, both homeomorphic to $S^3$, intersecting at the common singularity $p_c$. Their projections $B$ and $B'$ to the plane $(x,y)$, respectively, are depicted in Figure \ref{hamnano}.
\begin{figure}[h]
\centering
\includegraphics[scale=.6]{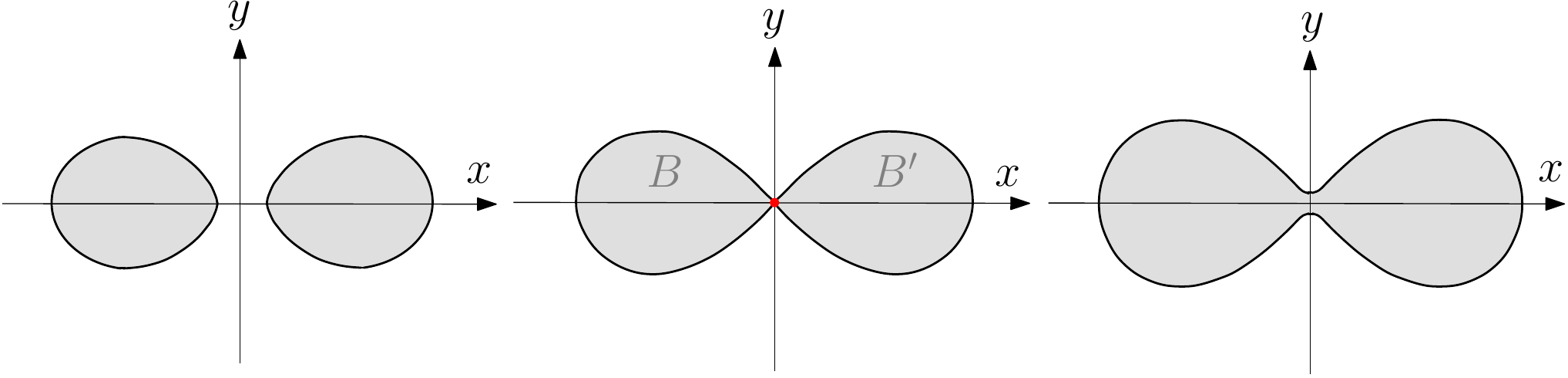}
\caption{Hill's regions of the Hamiltonian function defined by \eqref{ham_modal_simp} and \eqref{pot_modal_simp} for energies $E<0$, $E=0$ and $E>0$, respectively, for $|E|$ small. $S$ and $S'$ project to $B$ and $B'$, respectively.}
\label{hamnano}
\end{figure}

\begin{prop}\label{prop_convexity_nano}
Assume that $\alpha<0<\beta$. Then the sphere-like singular subsets $S$ and $S'$ are strictly convex.	
\end{prop}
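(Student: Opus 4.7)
The plan is to invoke Proposition~\ref{teo_convexidade} and verify the polynomial inequality \eqref{positivo} (with $E_0=0$) on the Hill's regions $B=\pi(S)$ and $B'=\pi(S')$. Since the potential $V$ is invariant under $(x,y)\mapsto(-x,y)$, which swaps $S$ and $S'$, and the left-hand side of \eqref{positivo} is preserved under this reflection, it suffices to verify the inequality on $B\setminus\{(0,0)\}$.

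Introducing the auxiliary variable $\rho := x^{2}+4y^{2}$, I would first observe that the partials of $V$ take the compact form
\begin{gather*}
V_{x} = 2x(\alpha+\rho), \quad V_{y} = 8y(\beta+\rho), \quad V_{xy} = 16xy,\\
V_{xx} = 2(\alpha+\rho+2x^{2}), \quad V_{yy} = 8(\beta+\rho+8y^{2}).
\end{gather*}
Substituting into \eqref{positivo} and grouping terms with the aid of the identity $x^{2}(\alpha+\rho)+4y^{2}(\beta+\rho)=V+\rho^{2}/2$, I expect the left-hand side to collapse to the factorization
\[
\Phi(x,y) \;=\; -16\,\rho^{2}\,\bigl[(\rho-3|\alpha|)(\rho+\beta) \;+\; 8(\beta+|\alpha|)\,y^{2}\bigr].
\]

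The remaining task is to show that the bracket is strictly negative on $B\setminus\{(0,0)\}$. On $B$ one has $\rho\in[0,\,2|\alpha|]$, so $\rho-3|\alpha|\leq -|\alpha|<0$ while $\rho+\beta>0$; hence the first summand is already negative. The nonnegative perturbation $8(\beta+|\alpha|)y^{2}$ is controlled by the defining inequality $V\leq 0$: the substitution $x^{2}=\rho-4y^{2}$ converts it into
\[
8(\beta+|\alpha|)\,y^{2} \;\leq\; 2|\alpha|\rho - \rho^{2}.
\]
A one-line computation gives $(3|\alpha|-\rho)(\rho+\beta) - (2|\alpha|\rho-\rho^{2}) = |\alpha|\rho + \beta(3|\alpha|-\rho)$, which is strictly positive on $B$ since $\rho\leq 2|\alpha|<3|\alpha|$. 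Chaining the two inequalities yields $(\rho-3|\alpha|)(\rho+\beta) + 8(\beta+|\alpha|)y^{2}<0$ on $B$, and therefore $\Phi>0$ on $B\setminus\{(0,0)\}$, as required by Proposition~\ref{teo_convexidade}.

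The main obstacle I anticipate is the algebraic simplification producing the factorization of $\Phi$: many monomials of degrees four, six, and eight in $(x,y)$ have to combine in a very specific pattern, and without committing to $\rho$ as the intermediate variable the computation is unwieldy. Once the identity for $\Phi$ is in hand, the sign analysis is an elementary consequence of the Hill's region bound $V\leq 0$ together with the fact that $\rho$ does not exceed $2|\alpha|$ on $B$.
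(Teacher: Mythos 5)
Your proof is correct and follows the same basic strategy as the paper: apply Proposition~\ref{teo_convexidade} and verify \eqref{positivo} on the Hill's region. The factorization you find,
$$\Phi=-16\rho^{2}\bigl[(\rho-3|\alpha|)(\rho+\beta)+8(\beta+|\alpha|)y^{2}\bigr],$$
is indeed equivalent to the one obtained in the paper ($T=-16\rho^{2}[\rho^{2}+\alpha(3\beta+3x^{2}+4y^{2})+\beta(x^{2}+12y^{2})]$; substitute $3x^{2}+4y^{2}=3\rho-8y^{2}$, $x^{2}+12y^{2}=\rho+8y^{2}$, $|\alpha|=-\alpha$). Where you part ways with the paper is in the sign analysis. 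The paper uses $V\leq0$ to get the crude bound $C\leq 3\alpha\beta+(\alpha+\beta)\rho$, then case-splits on $\mathrm{sgn}(\alpha+\beta)$; in the harder case $\alpha+\beta>0$ it parameterizes the boundary curve $V=0$, solves a quadratic in $U=\rho$, and verifies an explicit inequality. Your argument avoids the case split: you bound the nonnegative $y^{2}$-term directly by $2|\alpha|\rho-\rho^{2}$ using $V\leq0$ (in the rewritten form $\alpha\rho+4(\beta-\alpha)y^{2}+\tfrac12\rho^{2}\leq0$), and the rest is a one-line telescoping. This is a bit cleaner than the paper's treatment.

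The one point you assert without justification is the bound $\rho\in[0,2|\alpha|]$ on $B$, which you invoke twice (to conclude $\rho-3|\alpha|<0$ and to conclude $|\alpha|\rho+\beta(3|\alpha|-\rho)>0$). In the paper this bound requires parameterizing $\partial B$, but in your setup it is also immediate from $V\leq0$: since $\alpha<0$ and $x^{2}\leq\rho$ one has $\alpha\rho+\tfrac12\rho^{2}\leq\alpha x^{2}+4\beta y^{2}+\tfrac12\rho^{2}=V\leq0$, hence $\rho(\tfrac12\rho+\alpha)\leq0$ and $\rho\leq-2\alpha$. Including that one sentence would close the gap. The symmetry reduction to $B$ alone is harmless but also unnecessary — the estimates go through on $B\cup B'$ verbatim, which is how the paper phrases it.
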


\begin{proof}
	Consider the disks $B=\pi(S)$ and $B'=\pi(S')$, where $\pi:\R^4 \to \R^2$ is the canonical projection $\pi(x,y,p_x,p_y)=(x,y)$.  From  \eqref{ham_modal_simp}
	we see that $V\leq 0$ for all points in $B\cup B' \subset \pi(H^{-1}(0))$, i.e.,
	\begin{equation}\label{ineqVbarra}
	\alpha x^2+4\beta y^2+\frac{1}{2}\left(x^2+4y^2\right)^2 \leq 0, \,\,\,\, \forall (x,y) \in B\cup B'.
	\end{equation}
	
	According to Proposition \ref{teo_convexidade}, a necessary and sufficient condition for $S$ and $S'$ to be strictly convex singular subsets is that
	\begin{equation}\label{t}
	T:=-2V \det {\rm Hess} V+V_{xx}V_y^2+V_{yy}V_x^2-2V_xV_yV_{xy}>0
	\end{equation}
	for all points in $D:=B \cup B'\setminus \{(0,0)\}$. A straightforward computation shows that
	$$T=-16\left(x^2+4y^2\right)^2 \left[\left(x^2+4y^2\right)^2+\alpha\left(3\beta+3x^2+4y^2\right)+\beta\left(x^2+12y^2\right)\right],$$
	and hence we just need to prove that
	$$C:=\left(x^2+4y^2\right)^2+\alpha\left(3\beta+3x^2+4y^2\right)+\beta\left(x^2+12y^2\right)<0$$
	for all $(x,y) \in D$.
	
	Using \eqref{ineqVbarra} one can check that
	\begin{equation}\label{C}
	C \leq 3\alpha \beta+(\alpha+\beta)(x^2+4y^2)
	\end{equation}
	in $D$.
	If $\alpha+\beta \leq 0$ then inequality \eqref{C} directly implies that $C<0$ in $D$ since $\alpha<0<\beta$. Let us assume that $\alpha+\beta>0$.
	In this case,
	$$3\alpha \beta+(\alpha+\beta)(x^2+4y^2)<0  \Leftrightarrow x^2+4y^2 < -\frac{3\alpha \beta}{\alpha+\beta}$$
	and therefore, in order to prove that the function $C=C(x,y)$ is negative on $D$,  we need to check that $D$ is contained in the open subset bounded by the ellipse $$x^2+4y^2=-\frac{3\alpha \beta}{\alpha+\beta}.$$ It is sufficient to show  this condition for points in the boundary $\partial D$.
	
Let $U:\R^2 \to \R$ be given by $U(x,y):=x^2+4y^2$. Equality in \eqref{ineqVbarra} holds if and only if $(x,y) \in \partial D$. This means that
	\begin{equation}\label{2grau}
	U^2+2\alpha U+8(\beta-\alpha)y^2=0
	\end{equation}
	for all points in $\partial D$. The solutions of the polynomial \eqref{2grau} in $U$ are given by
	$$U=\frac{-2\alpha\pm\sqrt{4\alpha^2-32(\beta-\alpha)y^2}}{2}.$$
	Thus, the maximum value of $U$ on the boundary $\partial D$ is assumed when $y=0$, for which $U=-2\alpha >0$ and $x=\pm\sqrt{-2\alpha}$. Using again that $\alpha+\beta>0$, we see that $U(\pm\sqrt{-2\alpha},0)=-2\alpha < -\frac{3\alpha \beta}{\alpha+\beta}$
	is equivalent to $2\alpha^2 > \alpha\beta$, which clearly holds since $\alpha<0<\beta$. This proves that
	\begin{equation}\label{sup}
	 U(x,y)<-\frac{3\alpha \beta}{\alpha+\beta}, \ \ \forall (x,y) \in\partial D,
	\end{equation} as desired, and hence $C<0$ on $D$ in case $\alpha + \beta>0$ as well.
	
	We conclude that $T=T(x,y)$, given by \eqref{t}, is positive on $D$. Theorem \ref{teo_convexidade} implies that the subsets $S, S' \subset H^{-1}(0)$ containing the saddle-center $p_c=(0,0,0,0)$ are both strictly convex.
	\end{proof}

For all $E>0$ sufficiently small, the energy level $H^{-1}(E)$ contains closed $3$-balls $S_E$ and $S_E'$ near $S$ and $S'$, respectively, so that $\partial S_E=\partial S_E'$ and $W_E:=S_E \cup S_E'$ is an embedded $3$-sphere. The projection of $W_E\subset H^{-1}(E)$ to the plane $(x,y)$ is represented in the right-most drawing in Figure \ref{hamnano}.

It follows from Proposition \ref{prop_convexity_nano} and Theorem \ref{teo_PS} that both $S_E$ and $S_E'$ admit $2-3$ foliations, denoted by $\F_E$ and $\F_E'$, respectively, such that the Lyapunoff orbit $P_{2,E} \subset \partial S_E=\partial S_E'$ is a binding orbit for both $\F_E$ and $\F_E'$. In this case, the singular foliation $\F_E \cup \F_E'$ is what we call a $3-2-3$ foliation adapted to Hamiltonian flow on $W_E$, see Figure \ref{fig_323} and \cite[Remark 1.11]{PS}. Theorem \ref{maintheo} ensures the existence of infinitely many periodic orbits and infinitely many homoclinics to $P_{2,E}$ in each subset $S_E \setminus \partial S_E$ and $S_E'\setminus \partial S_E'$. 

\begin{theo}For all $E>0$ sufficiently small, $W_E$ admits a $3-2-3$ foliation. In particular, $W_E$ contains infinitely many periodic orbits and infinitely many homoclinics to the Lyapunoff orbit $P_{2,E}$ in the center manifold of the saddle-center $0\in \R^4$.  \end{theo}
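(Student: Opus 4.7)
The plan is to apply Theorems \ref{teo_PS} and \ref{maintheo} on each of the two sphere-like subsets $S$ and $S'$ separately, and then to observe that the two resulting $2-3$ foliations glue along their common boundary to give a $3-2-3$ foliation of $W_E$. To set up these applications, I first note that $H$ is real-analytic on $\R^4$ since the potential $V$ in \eqref{pot_modal_simp} is polynomial. The Hessian of $H$ at $p_c=0$ is ${\rm diag}(2\alpha,8\beta,1,1)$, so $J_0{\rm Hess}H(p_c)$ has real eigenvalues $\pm\sqrt{-2\alpha}$ and purely imaginary eigenvalues $\pm 2\sqrt{2\beta}\,i$, confirming that $p_c$ is a saddle-center. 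The Moser--R\"ussmann theorem invoked in Section 2 then furnishes real-analytic symplectic coordinates at $p_c$ in which $H$ takes the normal form \eqref{eq_normalform}. Strict convexity of $S$ and $S'$ is precisely Proposition \ref{prop_convexity_nano}.

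With these verifications in place, Theorem \ref{teo_PS} applies to each of the pairs $(H,S)$ and $(H,S')$ and produces, for all sufficiently small $E>0$, closed $3$-balls $S_E,S_E'\subset H^{-1}(E)$ carrying $2-3$ foliations $\F_E$ and $\F_E'$ adapted to $X_H$. To glue them, I use that in the Moser coordinates common to $S$ and $S'$ the Lyapunoff orbit $P_{2,E}=\{q_1=p_1=0\}\cap K^{-1}(E)$ lies in the center manifold of $p_c$ and is the same orbit independently of the side; likewise, $\partial S_E=\partial S_E'$ is cut out by $\{q_1+p_1=0\}$ and splits as $U_{1,E}\cup P_{2,E}\cup U_{2,E}$, so the two hemispheres $U_{1,E},U_{2,E}$ are simultaneously rigid planes of $\F_E$ and of $\F_E'$. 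The union $\F_E\cup \F_E'$ is thus a singular foliation of the embedded $3$-sphere $W_E=S_E\cup S_E'$ with bindings $P_{2,E},P_{3,E},P_{3,E}'$ of Conley-Zehnder indices $2,3,3$ respectively, that is, a $3-2-3$ foliation in the sense of \cite[Remark 1.11]{PS}.

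For the dynamical statement, Theorem \ref{maintheo} applies separately to $(H,S)$ and $(H,S')$, yielding infinitely many periodic orbits and infinitely many homoclinics to $P_{2,E}$ inside each of $S_E\setminus \partial S_E$ and $S_E'\setminus\partial S_E'$; taking their union inside $W_E$ gives the stated conclusion. The genuine analytic work in the argument is all contained in Proposition \ref{prop_convexity_nano}, which is already established; the main point that remains to articulate carefully is the compatibility of $\F_E$ and $\F_E'$ along the common boundary, and this is immediate from the Moser normal-form description of $P_{2,E}$, $U_{1,E}$, and $U_{2,E}$, which depends only on the germ of $H$ at $p_c$ and not on which side of $\partial S_E$ one works on.
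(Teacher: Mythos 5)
Your proposal is correct and follows essentially the same route as the paper: verify strict convexity via Proposition \ref{prop_convexity_nano}, invoke Theorem \ref{teo_PS} on each of $S$ and $S'$ to obtain $\F_E$ and $\F_E'$, observe that these share $P_{2,E}$ and the hemispheres of $\partial S_E=\partial S_E'$ so that $\F_E\cup\F_E'$ is a $3-2-3$ foliation, and then apply Theorem \ref{maintheo} to each side. The extra detail you supply (the polynomial potential being real-analytic, the eigenvalue computation $\pm\sqrt{-2\alpha}$ and $\pm 2\sqrt{2\beta}\,i$ confirming the saddle-center, the Moser--R\"ussmann normal form) is all accurate and makes explicit what the paper leaves implicit.
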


\begin{figure}[h!!]
	\centering
	\includegraphics[width=0.6\textwidth]{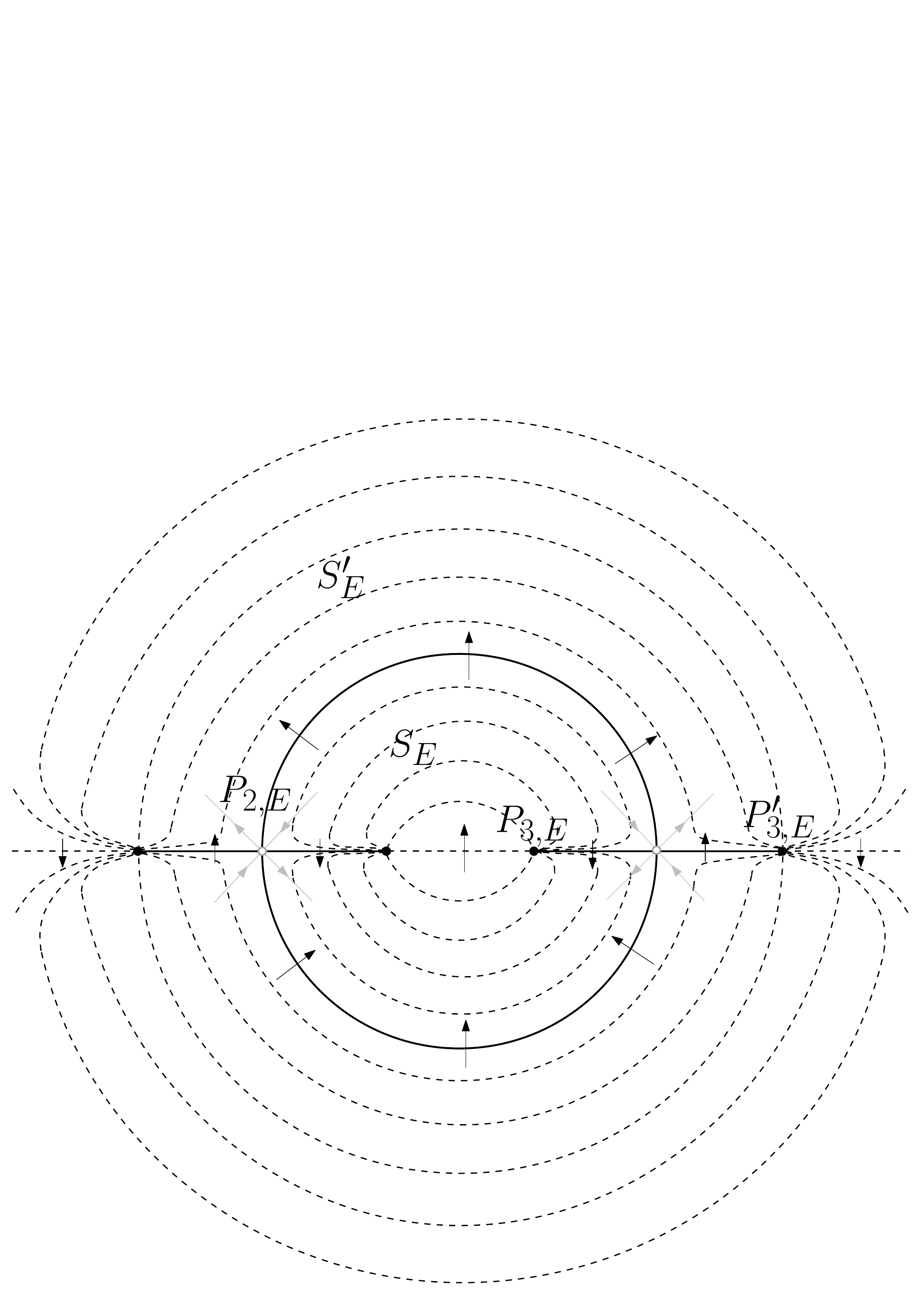}
	\caption{A section of a $3-2-3$ foliation on $W_E=S_E \cup S_E'$. The black arrows point to the same direction of the Hamiltonian vector field.}
	\label{fig_323}
	\hfill
\end{figure}

\subsection{The H\'enon-Heiles Hamiltonian}

Let $H:\R^4 \to \R$ be the Hamiltonian defined by
\begin{equation}\label{eq_hamHH}
H(x,y,p_x,p_y)=\frac{p_x^2+p_y^2}{2}+ \frac{x^2+y^2}{2}+bx^2y-\frac{y^3}{3}
\end{equation}
which depends on a parameter $0<b<1$.
The case $b=1$ is known as H\'enon-Heiles Hamiltonian and it models the motion of stars around a galactic center.

The Hamiltonian $H$ admits a saddle-center equilibrium at $p_c=(0,1,0,0) \in H^{-1}\left(\frac{1}{6}\right)$. Moreover, for all $0<b<1$, $p_c$ lies in a strictly convex singular subset $S_0 \subset H^{-1}\left(\frac{1}{6}\right)$, see \cite{Sa1} for a proof. The projection $B$ of $S_0$ to the $(x,y)$-plane is seen in Figure \ref{fig_hamHH}.

\begin{figure}[h!!]
  \centering
  \includegraphics[width=1\textwidth]{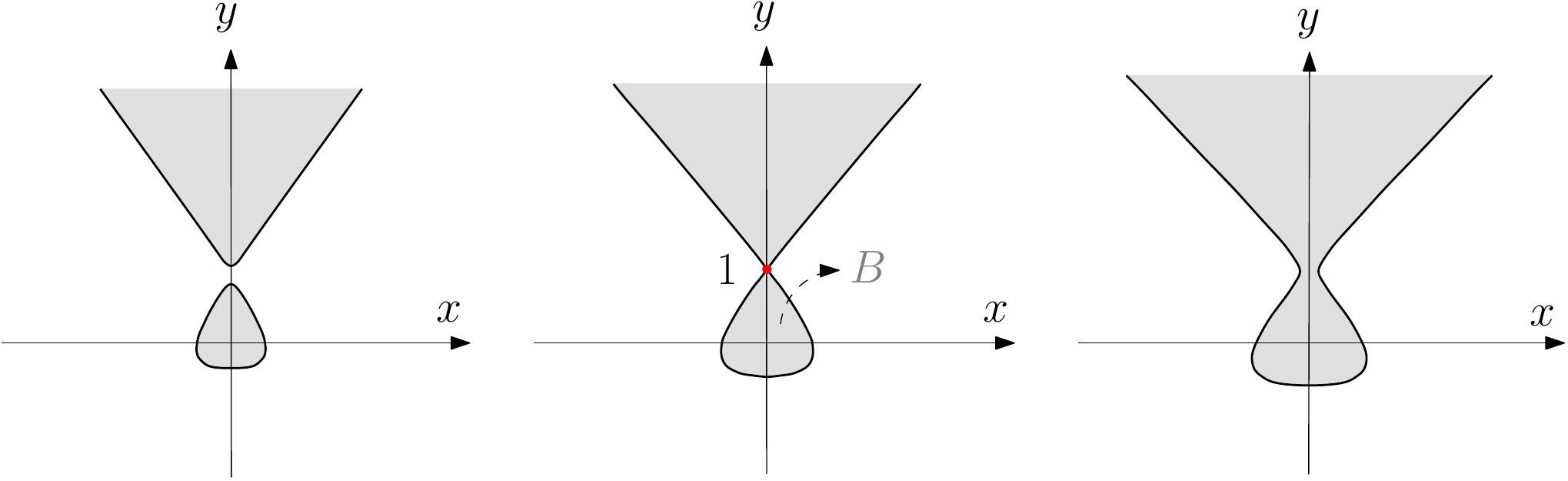}
  \caption{Hill's regions of the Hamiltonian function \eqref{eq_hamHH} for energies $E<\frac{1}{6}$, $E=\frac{1}{6}$ and $E>\frac{1}{6}$ respectively, with $\left|E-\frac{1}{6}\right|$ small.}
	\label{fig_hamHH}
\hfill
\end{figure}

As observed in \cite[\S 1.5]{PS}, Theorem \ref{teo_PS} gives a $2-3$ foliation $\F_E$ defined on an embedded closed $3$-ball $S_E\subset H^{-1}(E)$ near $S_0$ for each $E>\frac{1}{6}$ sufficiently close to $\frac{1}{6}$. One of the binding orbits of $\F_E$ is the Lyapunoff orbit $P_{2,E}\subset \partial S_E$.  Theorem \ref{maintheo} implies the following result.

\begin{theo}For every $E>\frac{1}{6}$ sufficiently close to $\frac{1}{6}$ the Hamiltonian flow on the closed $3$-ball $S_E \subset H^{-1}(E)$ admits infinitely many periodic orbits and infinitely many homoclinics to the Lyapunoff orbit $P_{2,E}$. Moreover, if the branches of the stable and unstable manifolds of $P_{2,E}$ inside $S_E$ do not coincide then the topological entropy of the Hamiltonian flow restricted to $H^{-1}(E)$ is positive.
\end{theo}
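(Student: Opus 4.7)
The plan is to split the argument according to whether the branches of $W^s(P_{2,E})$ and $W^u(P_{2,E})$ inside $\dot S_E := S_E\setminus \partial S_E$ coincide, and in each case use the $2-3$ foliation $\F_E$ provided by Theorem \ref{teo_PS} to reduce the dynamics to a surface map from which the conclusions can be extracted by classical theorems. The common first step is to build a first-return map: transversality condition (v) in the definition of $\F_E$ gives a transverse section built from a disk-like leaf $D_{\tau_0,E}$ (or from the open rigid cylinder $V_E$), and the way the family $\{D_{\tau,E}\}_{\tau\in(0,1)}$ degenerates at its endpoints into $V_E\cup P_{2,E}\cup U_{i,E}$ forces every orbit of $X_H$ that remains in $\dot S_E$ for all positive times to meet this section infinitely often. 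Since $X_H$ is Hamiltonian, the return map preserves a finite area on the section.

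\emph{Case 1: coinciding branches.} Under this hypothesis, the common surface $\Sigma := W^s(P_{2,E})\cap \dot S_E = W^u(P_{2,E})\cap\dot S_E$ is automatically foliated by a one-parameter family of homoclinic orbits to $P_{2,E}$, so infinitely many homoclinics come for free. To produce infinitely many periodic orbits I would argue that $\Sigma$, together with the rigid leaves of $\F_E$, bounds a region whose non-escaping dynamics is conjugate to an area-preserving homeomorphism of an open punctured disk, the puncture being associated to $P_{2,E}$. Using the Moser normal form \eqref{eqK}, trajectories entering the Moser neighborhood along $W^s_{\rm loc}(P_{2,E})$ and exiting along $W^u_{\rm loc}(P_{2,E})$ take a return time of order $|\log \mathrm{dist}|/\bar\alpha$ and rotate around $P_{2,E}$ by an unbounded angle as their distance to the puncture tends to zero; this supplies the infinite twist hypothesis at the puncture. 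Franks' generalization of the Poincar\'e-Birkhoff theorem \cite{franks1} then produces infinitely many interior periodic points of the return map, hence infinitely many periodic orbits of $X_H$ in $\dot S_E$.

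\emph{Case 2: non-coinciding branches.} Theorem \ref{teo_PS} already yields one homoclinic orbit to $P_{2,E}$ in $\dot S_E$, so $W^s(P_{2,E})\cap W^u(P_{2,E})$ is non-empty. The real-analyticity of $H$ implies that $W^s(P_{2,E})$ and $W^u(P_{2,E})$ are real-analytic surfaces, and the standing assumption that their branches in $\dot S_E$ do not coincide forces this analytic intersection to be a proper subset; tracking the non-coincidence through the Moser normal form and the analytic parametrizations of the invariant manifolds then yields at least one transverse homoclinic intersection between $W^s(P_{2,E})$ and $W^u(P_{2,E})$. The Birkhoff-Smale homoclinic theorem applied to this transverse homoclinic point produces a hyperbolic horseshoe in $\dot S_E$, which delivers simultaneously infinitely many periodic orbits, infinitely many transverse homoclinics to $P_{2,E}$, and positive topological entropy for the Hamiltonian flow on $H^{-1}(E)$.

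\emph{Main obstacle.} The principal difficulty lies in Case 1, where one must rigorously identify the relevant section together with its return map as an area-preserving homeomorphism of an open punctured disk, and verify the infinite twist condition precisely at the puncture corresponding to $P_{2,E}$. This demands a careful global description of how the leaves $D_{\tau,E}$ asymptote to the rigid pieces as $\tau\to 0^+$ and $\tau\to 1^-$, combined with fine local estimates on return times and winding angles derived from the normal form \eqref{eqK} near the saddle-center. A secondary technical subtlety is the real-analytic argument in Case 2 promoting non-coincidence of branches to a genuine transverse intersection, which must rule out the possibility of infinite-order tangencies between the two analytic invariant surfaces.
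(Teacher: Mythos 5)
The theorem you are proving is, in the paper, a direct application of the general Theorem \ref{maintheo}; the only input specific to the H\'enon-Heiles system \eqref{eq_hamHH} is the fact that $S_0\subset H^{-1}(1/6)$ is a strictly convex singular subset for all $0<b<1$, which the paper cites from \cite{Sa1} just before the statement. Your proposal never verifies or even mentions this, yet your entire plan begins by invoking the $2-3$ foliation from Theorem \ref{teo_PS}, which requires precisely that convexity hypothesis. This is the one step specific to the H\'enon-Heiles Hamiltonian and cannot be skipped.

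Beyond that missing hypothesis check, you are essentially re-sketching the proof of Theorem \ref{maintheo}, and your decomposition into coinciding versus non-coinciding branches does mirror Sections \ref{sec_coincide} and \ref{sec_naocoincide}. Two gaps remain. In Case 2, non-coincidence of the real-analytic branches of $W^s(P_{2,E})$ and $W^u(P_{2,E})$ does \emph{not} by itself produce a transverse homoclinic point: two non-coinciding analytic arcs can meet with a tangency of finite contact order, which is not transverse. You worry about ruling out infinite-order tangencies, but those are automatically excluded by analyticity once the branches do not coincide; the genuine obstacle is a finite-order tangency. The paper's mechanism for promoting such a tangential homoclinic to transverse ones is the infinite twist of the local transition map, Lemma \ref{twist}: the image $l(\dot\gamma_u)$ of an arc of the unstable curve spirals monotonically around the stable circle $S_c$, and a comparison of angular derivatives (bounded below by a multiple of $1/(I_2-I_2^c)$ for the spiral, bounded above by a power law for the stable arc) shows this spiral crosses $\gamma_s$ transversely infinitely many times near $S_c$. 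In Case 1, you posit a return map on a once-punctured disk, but the number $N$ of iterates of $\Psi^g\circ\Psi^l$ needed for the unstable circle to reach the stable circle may exceed one, in which case the quotient $\widetilde D_1$ has $N$ punctures cyclically permuted; the paper handles $N>1$ via Franks' theorem for area-preserving homeomorphisms of genus-zero surfaces \cite{franks2}, and uses positively and negatively returning disks with \cite{franks1,franks_erratum} only for the sub-case $N=1$.
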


\subsection{Charged particles in planetary magnetospheres}

The motion of a dust particle in a planetary magnetosphere, whose dynamics is dominated by gravitational and eletromagnetic forces, is studied in \cite{CPM}, see also \cite{dullin,hhs,mhh,xuwuma}. Under some strong simplifying assumptions, this problem can be reduced to a two-degree-of-freedom Hamiltonian system in $\R^4$ with Hamiltonian function given by

\begin{equation}\label{hamsemresto}
H(x,z,p_x,p_z)=\frac{p_x^2+p_z^2}{2}+V(x,z),
\end{equation} where $V$ is the potential function
\begin{equation}\label{potential}
V(x,z)=-\epsilon x+\frac{a^2}{3}x^3+\frac{c^2}{2}z^2
\end{equation}
depending on the parameters $a,c,\epsilon>0.$

The points
$$p_1=\left(-\frac{\sqrt{\epsilon}}{a},0\right) \hspace{1cm} \mbox{ and } \hspace{1cm} p_2=\left(\frac{\sqrt{\epsilon}}{a},0\right)$$
are critical points of $V$ and hence $P_1:=(p_{1},0,0)$ and $P_2:=(p_{2},0,0)$ are equilibrium points of the Hamiltonian flow of $H$. Their energies are given by
\begin{equation}\label{E1}
E_1=H(P_1)=\frac{2\epsilon\sqrt{\epsilon}}{3a} \hspace{1cm} \mbox{ and } \hspace{1cm} E_2=H(P_2)=-\frac{2\epsilon\sqrt{\epsilon}}{3a}.
\end{equation}
Since $\mbox{Hess}V(x,z)=\mbox{Diag}\left( 2a^2x,c^2\right)$ we see that $P_1$ is a saddle-center equilibrium point of $H$ and $P_2$ corresponds to a local minimum. We shall prove that for any $a,c,\epsilon>0$ the saddle-center $P_1$ lies in a strictly convex singular subset $S_0 \subset H^{-1}(E_1)$.

Let $\pi:\R^4 \to \R^2$ be the canonical projection  \begin{equation}\label{projection1}
\pi(x,z,p_x,p_z)=(x,z).
\end{equation}
The projection $\pi(H^{-1}(E_1))$  has a subset $B$ homeomorphic to the closed disk which contains  $\pi(P_1)=p_1$. The boundary $\partial B$ is a closed curve in $\R^2$, which is regular except at  $p_1$, and its points satisfy $V=E_1$. See Figure \ref{fig_ham_magneto}. Since $V\left(-\sqrt{\epsilon}/a,0\right) = V\left(2\sqrt{\epsilon}/a,0\right)=E_1$ we see from \eqref{potential} that
\begin{equation}\label{projection2}
B\subset \left\{(x,z)\in \R^2 \mid -\frac{\sqrt{\epsilon}}{a} \leq x \leq  2\frac{\sqrt{\epsilon}}{a} \right\}.
\end{equation}

The subset $S_0:=\pi^{-1}(B) \cap H^{-1}(E_1)$ contains the saddle-center $P_1$, is homeomorphic to the $3$-sphere $S^3$ and $\dot S_0:=S_0\setminus \{P_1\}$ is a regular hypersurface of $\R^4$.

\begin{figure}[h!!]
	\centering
	\includegraphics[width=1\textwidth]{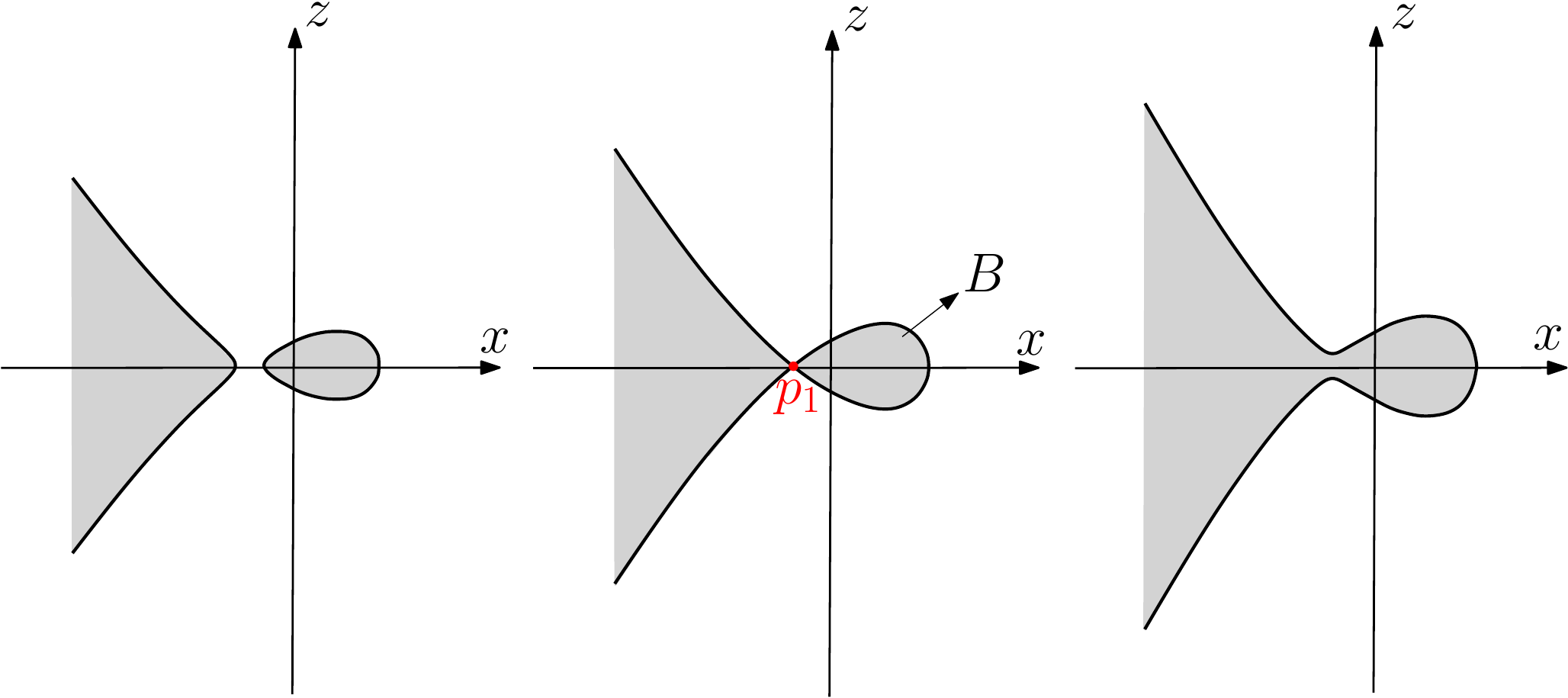}
	\caption{Hill's regions of the Hamiltonian function \eqref{hamsemresto} for energies $E<E_1$, $E=E_1$ and $E>E_1$ respectively, with $|E-E_1|$ small.}
	\label{fig_ham_magneto}
	\hfill
\end{figure}

\begin{prop}\label{prop_magneto}For all $a,c,\epsilon>0$, the sphere-like singular subset $S_0=\pi^{-1}(B) \cap H^{-1}(E_1)$ is strictly convex.
\end{prop}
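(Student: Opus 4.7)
The plan is to apply Proposition \ref{teo_convexidade}: strict convexity of $S_0$ is equivalent to the strict positivity of
\begin{equation*}
T := 2(E_1 - V)(V_{xx}V_{zz} - V_{xz}^2) + V_{xx}V_z^2 + V_{zz}V_x^2 - 2V_xV_zV_{xz}
\end{equation*}
throughout $B \setminus \{p_1\}$. For the potential \eqref{potential} one has $V_x = a^2x^2 - \epsilon$, $V_z = c^2 z$, $V_{xx} = 2a^2 x$, $V_{zz} = c^2$, and $V_{xz} = 0$, so the cross term drops and
\begin{equation*}
T = 2a^2c^2 x\bigl[2(E_1 - V) + c^2z^2\bigr] + c^2(a^2x^2 - \epsilon)^2.
\end{equation*}

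The key algebraic observation is that the $z$-dependence cancels in the bracket: direct substitution yields $2(E_1 - V) + c^2z^2 = 2E_1 + 2\epsilon x - \frac{2a^2}{3}x^3$. Introducing the shorthand $\mu := \sqrt{\epsilon}/a$, so that $E_1 = \frac{2\epsilon\mu}{3}$ and $p_1 = (-\mu,0)$, both surviving polynomials factor cleanly as $2E_1 + 2\epsilon x - \frac{2a^2}{3}x^3 = \frac{2a^2}{3}(x+\mu)^2(2\mu - x)$ and $(a^2x^2 - \epsilon)^2 = a^4(x-\mu)^2(x+\mu)^2$, sharing the factor $(x+\mu)^2$. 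Collecting,
\begin{equation*}
T = a^4c^2(x+\mu)^2\left[\frac{4}{3}x(2\mu-x) + (x-\mu)^2\right],
\end{equation*}
and expanding the bracket produces the quadratic $-\frac{1}{3}(x-3\mu)(x+\mu)$, yielding the final identity
\begin{equation*}
T = \frac{a^4c^2}{3}(x+\mu)^3(3\mu - x).
\end{equation*}

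Finally I would use the bound \eqref{projection2}, namely $-\mu \leq x \leq 2\mu$ for every $(x,z) \in B$. On this range $(x+\mu)^3 \geq 0$ and $3\mu - x \geq \mu > 0$, so $T \geq 0$ with equality precisely on the line $\{x = -\mu\}$. Since $V(-\mu, z) = E_1 + \frac{c^2}{2}z^2$, a point of $B$ with $x = -\mu$ must have $z = 0$ and hence coincide with $p_1$. Therefore $T > 0$ on $B \setminus \{p_1\}$ and Proposition \ref{teo_convexidade} delivers the conclusion. The only nontrivial step is spotting the $z$-cancellation and the shared $(x+\mu)^2$ factor in the two polynomial pieces; once these are identified, the rest is routine polynomial algebra.
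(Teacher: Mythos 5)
Your proof is correct and follows essentially the same route as the paper's: both compute the convexity quantity $T$ from Proposition \ref{teo_convexidade}, observe the $z$-cancellation, arrive at the same quartic (the paper writes it as $\frac{c^2}{3}g(x)$ and, after substituting $u=x\,a/\sqrt{\epsilon}$, as $-(u+1)^3(u-3)$ up to sign, which is precisely your $\frac{a^4c^2}{3}(x+\mu)^3(3\mu-x)$), and then invoke the bound \eqref{projection2} to conclude positivity on the relevant interval. The only cosmetic difference is that the paper rescales to $u$ before factoring while you factor directly in $x$, and you spell out explicitly why the endpoint $x=-\mu$ contributes only the excluded point $p_1$, a step the paper leaves implicit.
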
	

\begin{proof}
In order to prove that $S_0$ is strictly convex we need to verify the condition
$$M:=2(E_1-V)\det {\rm Hess}V+V_{xx}V_z^2+V_{zz}V_x^2-2V_xV_zV_{xz}>0,$$
for all points in $\dot B:=B\setminus \{p_1\}$, see Proposition \ref{teo_convexidade}.  A direct computation shows that
$$M(x,z)=\frac{c^2}{3}g(x),$$
where
$$
g(x)=-a^4x^4+6a^2\epsilon x^2+8a\epsilon\sqrt{\epsilon}x+3\epsilon^2.
$$
So it is sufficient to check that $g(x)>0$ for all $x\in \left(-\frac{\sqrt{\epsilon}}{a}, 2\frac{\sqrt{\epsilon}}{a}\right].$

Defining $u:=\frac{a}{\sqrt{\epsilon}} x$, we reduce to proving that $q(u)>0$ on $(-1,2]$, where $$q(u)=-u^4+6u^2+8u+3.$$ One can easily see that $u=-1$ and $u=3$ are roots of $q$ with multiplicities $3$ and $1$, respectively, and thus $q$ is positive on  $(-1, 2]$. Consequently, $M$ is positive  on $\dot B$.
We conclude from Theorem \ref{teo_convexidade} that  $S_0 \subset H^{-1}(E_1)$ is a strictly convex singular subset of $H^{-1}(E_1)$ which contains the saddle-center $P_1$.
\end{proof}

Proposition \ref{prop_magneto} together with Theorems \ref{teo_PS} and \ref{maintheo} imply the following theorem.

\begin{theo}Fix $\epsilon,a,c>0$ and let $E_1$ be as in \eqref{E1}.  For every $E-E_1>0$ sufficiently small the closed $3$-ball $S_E\subset H^{-1}(E)$  admits a $2-3$ foliation adapted to the Hamiltonian flow. Moreover, $S_E$ admits  infinitely many periodic orbits and infinitely many homoclinics to $P_{2,E}$. Since $H$ is integrable, the branches of the stable and unstable manifolds of $P_{2,E}$ inside $S_E$ coincide.
\end{theo}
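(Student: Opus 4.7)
The plan is first to verify the hypotheses of Theorems \ref{teo_PS} and \ref{maintheo} and then to handle the integrability claim separately. Since $H$ is polynomial, it is real-analytic on all of $\R^4$; the Moser--R\"ussmann theorem puts it into the local normal form \eqref{eq_normalform} near the saddle-center $P_1$; and Proposition \ref{prop_magneto} supplies the strict convexity of the sphere-like singular subset $S_0 \subset H^{-1}(E_1)$. These ingredients directly yield the $2-3$ foliation on $S_E$ and the existence of infinitely many periodic orbits and homoclinics to $P_{2,E}$ inside $\dot S_E$ for every $E-E_1>0$ sufficiently small.

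For the coincidence of the branches of $W^{s}(P_{2,E})$ and $W^{u}(P_{2,E})$ contained in $\dot S_E$, the plan is to exhibit Liouville integrability explicitly. The potential is separable,
\[
V(x,z)=V_1(x)+V_2(z),\quad V_1(x)=-\epsilon x+\frac{a^2}{3}x^3,\quad V_2(z)=\frac{c^2}{2}z^2,
\]
so $H$ decomposes as $H=H_1(x,p_x)+H_2(z,p_z)$ into two Poisson-commuting one-degree-of-freedom Hamiltonians, and $(H_1,H_2)$ provides a pair of functionally independent first integrals in involution.

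Next I would restrict attention to the joint level set carrying $P_{2,E}$. Since $P_{2,E}$ freezes $(x,p_x)$ at the saddle $(-\sqrt{\epsilon}/a,0)$ of $V_1$ and oscillates in $(z,p_z)$ on an ellipse of energy $E-E_1$, one has $H_1\equiv E_1$ and $H_2\equiv E-E_1$ along the whole invariant set $W^{s,u}(P_{2,E})$. In the $(x,p_x)$-plane, the identity
\[
V_1(x)-E_1=\frac{a^2}{3}\left(x+\frac{\sqrt{\epsilon}}{a}\right)^{\!2}\left(x-\frac{2\sqrt{\epsilon}}{a}\right)
\]
exhibits the level $\{H_1=E_1\}$ as the union of the hyperbolic saddle, a bounded homoclinic loop $\gamma$ contained in $[-\sqrt{\epsilon}/a,\,2\sqrt{\epsilon}/a]$, and an unbounded branch escaping to $x\to-\infty$. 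Taking the product of $\gamma$ with the $(z,p_z)$-ellipse $\{H_2=E-E_1\}$ yields a $2$-torus in $\R^4$ all of whose orbits are bi-asymptotic to $P_{2,E}$; by the Liouville structure this torus coincides with both $W^s(P_{2,E})\cap\dot S_E$ and $W^u(P_{2,E})\cap\dot S_E$, giving the desired coincidence.

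The main point to control, more than a genuine obstacle, is that the unbounded branch of the planar separatrix does not contribute inside $S_E$. This follows from \eqref{projection2} together with the continuity of $\pi(S_E)$ as $E \to E_1^+$, which confines the $x$-coordinate to a small neighborhood of $[-\sqrt{\epsilon}/a,\,2\sqrt{\epsilon}/a]$ and excludes the escape branch with $x\to-\infty$. The remainder of the argument is automatic from Theorems \ref{teo_PS} and \ref{maintheo}.
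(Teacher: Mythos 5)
Your proposal is correct and follows the paper's route for the bulk of the statement: real-analyticity of the polynomial $H$, the Moser--R\"ussmann normal form at $P_1$, and Proposition \ref{prop_magneto} feed into Theorems \ref{teo_PS} and \ref{maintheo}, which is exactly what the paper does (the paper's ``proof'' is the single sentence preceding the theorem). Where you go beyond the paper is in the final claim: the paper simply asserts ``since $H$ is integrable, the branches coincide'' without proof, whereas you actually demonstrate it via the separation $H=H_1(x,p_x)+H_2(z,p_z)$, the factorization $V_1(x)-E_1=\frac{a^2}{3}(x+\sqrt{\epsilon}/a)^2(x-2\sqrt{\epsilon}/a)$ exhibiting the planar homoclinic loop $\gamma$, and the product torus $\gamma\times\{H_2=E-E_1\}$ carrying the coinciding branches. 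This is a genuine and welcome completion of the omitted step; the factorization checks out (double root at $x=-\sqrt{\epsilon}/a$, simple root at $x=2\sqrt{\epsilon}/a$, matching leading coefficient $a^2/3$), and your confinement argument correctly excludes the unbounded escape branch of the separatrix. One small remark: the confinement is in fact automatic from the construction of $\dot S_E$ in Moser coordinates (only the branch on the $q_1+p_1\ge 0$ side enters $\dot S_E$), so the appeal to continuity of $\pi(S_E)$ is a safety net rather than a necessity, but it does no harm.
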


\section{Existence of a homoclinic to the Lyapunoff orbit}\label{sec_existenciahomoc}

Consider the $2-3$ foliation $\F_E$ on the $3$-ball $S_E \subset H^{-1}(E)$ as the one obtained in Theorem \ref{teo_PS}, for $E>0$ small. The foliation $\F_E$ contains a one parameter family of planes $D_{\tau,E}$, $\tau \in (0,1)$, each one transverse to the Hamiltonian vector field $X_H$, so that the closure of $D_{\tau,E}$ has the periodic orbit $P_{3,E}$ as boundary. We shall study the first return map to such leaves, where it is defined, in order to prove multiplicity of periodic orbits and homoclinics to $P_{2,E}$ in $\dot S_E=S_E \setminus \partial S_E$.
In the following we fix $E>0$  small and assume the existence of the $2-3$ foliation $\F_E$ on $S_E \subset H^{-1}(E)$. From now on we may omit the dependence on $E$ in the notation for simplicity.

For all $0<\tau_0<1$ sufficiently close to $0$, the branch $W^u_{\rm loc}(P_{2,E})$ of the local unstable manifold of $P_{2,E}$ inside $\dot S_E$ intersects the plane $D_{\tau_0}$ on an embedded circle $C^u_{\tau_0}$.  All points in the interior of the closed disk $B^u_{\tau_0}\subset D_{\tau_0}$ bounded by $C^u_{\tau_0}$ correspond to trajectories just entering $S_E$ through the hemisphere $U_{1,E}$.
Similarly, for $0<\tau_1<1$ sufficiently close to $1$,   the branch of the local stable manifold $W^s_{\rm loc}(P_{2,E})$ inside $\dot S_E$ intersects $D_{\tau_1}$ on an embedded circle denoted by $C^s_{\tau_1}$. All points in the interior of the closed disk $B^s_{\tau_1}\subset D_{\tau_1}$ bounded by $C^s_{\tau_1}$ correspond to trajectories which exit $S_E$ through the hemisphere $U_{2,E}$.

Due to the existence of the $2-3$ foliation on $S_E$, the Hamiltonian flow induces the following symplectomorphisms:
\begin{itemize}
\item A global transition map
\begin{equation}\label{psig}\Psi^g: D_{\tau_0} \to D_{\tau_1}\end{equation}
defined as follows: if $x\in D_{\tau_0}$, then $\Psi^g(x)$ is the first point in the positive trajectory through $x$ which hits $D_{\tau_1}$. Such diffeomorphism preserves the canonical symplectic form restricted to the disks $D_{\tau_0}$ and $D_{\tau_1}$.
\item A local transition map
\begin{equation}\label{psil}\Psi^l: D_{\tau_1} \setminus B^s_{\tau_1}\to D_{\tau_0}\setminus B^u_{\tau_0}\end{equation}
defined as follows: if $x\in D_{\tau_1} \setminus B^s_{\tau_1}$, then $\Psi^l(x)$ is the first point in the positive trajectory through $x$ which hits the annulus $D_{\tau_0}\setminus B^u_{\tau_0}$. Such diffeomorphism preserves the canonical symplectic form restricted to the annuli $D_{\tau_1} \setminus B^s_{\tau_1}$ and $D_{\tau_0}\setminus B^u_{\tau_0}$. Later on we shall use Moser's coordinates to describe $\Psi^l$ near the boundary component $C^s_{\tau_1}$.
\end{itemize}
The existence of the global and local maps above follows from the fact that the Conley-Zehnder index of $P_{3,E}$ is $3$ and by a precise description of the asymptotic behavior of the regular leaves $D_\tau, 0<\tau<1,$ close to $P_{3,E}$. See \cite{PS} and \cite{convex}.

Using these transition maps we can prove that the $2-3$ foliation in $S_E$ forces the existence of at least one homoclinic orbit to  $P_{2,E}$ contained in $\dot S_E$.

\begin{prop}\label{prop_homoclinic}The open $3$-ball  $\dot S_E$ contains at least one homoclinic orbit to $P_{2,E}$. \end{prop}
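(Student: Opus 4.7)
The plan is to construct a homoclinic by iterating the return map $\Phi := \Psi^l \circ \Psi^g$ on $D_{\tau_0}$, starting from the closed disk $B^u_{\tau_0}$, and using area preservation to force the iterates eventually to meet $B^s_{\tau_1}$.

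First I would identify the relevant symplectic areas. Let $\lambda_0$ be a Liouville primitive of $\omega_0$. By Stokes' theorem, $\text{area}(B^u_{\tau_0}) = \int_{C^u_{\tau_0}} \lambda_0$. Since $W^u(P_{2,E})$ is a two-dimensional invariant surface whose tangent bundle contains $X_H$ --- and $X_H$ lies in the kernel of $\omega_0|_{H^{-1}(E)}$ --- one has $d\lambda_0 \equiv 0$ on $W^u(P_{2,E})$, so $\lambda_0$-integrals along loops in $W^u(P_{2,E})$ depend only on free-homotopy class. Because $C^u_{\tau_0}$ is freely homotopic to $P_{2,E}$ inside $W^u(P_{2,E})$, this yields $\text{area}(B^u_{\tau_0}) = \int_{P_{2,E}} \lambda_0 =: A_2 > 0$; symmetric reasoning on $W^s(P_{2,E})$ gives $\text{area}(B^s_{\tau_1}) = A_2$. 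The analogous computation on the leaf $D_{\tau_0}$, using its asymptotic behaviour near $P_{3,E}$, furnishes $\text{area}(D_{\tau_0}) =: A_3 < +\infty$, with $A_3 > A_2$ since $B^u_{\tau_0}$ is a proper subset of $D_{\tau_0}$.

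Next I would argue by contradiction, assuming no homoclinic to $P_{2,E}$ exists in $\dot S_E$, and set $B_0 := B^u_{\tau_0}$. The key topological observation is that if, at some step $k$, $\Psi^g(B_k) \cap B^s_{\tau_1}$ were nonempty, then $\Psi^g(B_k)$ and $B^s_{\tau_1}$ would be topological closed disks of equal area $A_2$ in the plane $D_{\tau_1}$ with nonempty intersection; the Jordan curve theorem then forces either the boundary circles $\Psi^g(\partial B_k)$ and $C^s_{\tau_1}$ to meet --- in which case any intersection point lies on $W^u(P_{2,E}) \cap W^s(P_{2,E})$ and yields a homoclinic orbit --- or the two disks to nest and, by equal areas, to coincide, yielding an entire circle of homoclinics. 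Either alternative contradicts the standing hypothesis, so $\Psi^g(B_k) \cap B^s_{\tau_1} = \emptyset$ and one may set $B_{k+1} := \Phi(B_k) \subset D_{\tau_0} \setminus B^u_{\tau_0}$, again a closed disk of area $A_2$. Iterating produces an infinite sequence of disks $\{B_k\}_{k \geq 0}$.

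Finally I would check pairwise disjointness and close the argument: by construction $B_k \subset D_{\tau_0} \setminus B_0$ for $k \geq 1$, and if $x \in B_i \cap B_j$ with $0 \leq i < j$, applying the appropriate inverse branches of $\Phi$ exhibits a point in $B_0 \cap B_{j-i}$, contradicting $B_{j-i} \subset D_{\tau_0} \setminus B_0$. Thus the $B_k$ are pairwise disjoint, and area preservation gives
$$
(N+1)\, A_2 \;=\; \sum_{k=0}^{N} \text{area}(B_k) \;\leq\; \text{area}(D_{\tau_0}) \;=\; A_3
$$
for every $N \geq 0$, which is impossible since $A_2 > 0$. The resulting contradiction produces the claimed homoclinic. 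The main technical obstacle is the area identification in the first step: one must handle the non-compactness of the leaf $D_{\tau_0}$ (whose asymptotic boundary is $P_{3,E}$) and verify via the Moser normal form that $C^u_{\tau_0}$ wraps once around $W^u(P_{2,E})$ with the orientation making $\int_{C^u_{\tau_0}} \lambda_0 = A_2$; once these areas are pinned down, the iteration and Jordan-curve arguments are routine.
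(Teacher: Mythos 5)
Your proof is correct and follows essentially the same strategy as the paper's (Proposition \ref{prop_homoclinic2}): iterate the composed transition map starting from the unstable-manifold disk, use that all iterates are pairwise disjoint closed disks of a fixed positive symplectic area, and conclude from the finite area of the leaf that after finitely many steps the stable-manifold disk must be met, whence a homoclinic. The only differences are cosmetic --- you iterate $\Psi^l\circ\Psi^g$ on $D_{\tau_0}$ where the paper iterates $\Psi^g\circ\Psi^l$ on $D_{\tau_1}$, you phrase the termination as a contradiction rather than a constructive bound on $N$, and you spell out both the Stokes/Liouville computation identifying the disk areas with $\int_{P_{2,E}}\lambda_0$ and the Jordan-curve dichotomy (disjoint or nested, hence equal) that the paper leaves implicit.
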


Proposition \ref{prop_homoclinic} follows directly from Proposition \ref{prop_homoclinic2} below. It relies on standard arguments based on preservation of area, see \cite{BGS} and \cite{fols}. We include it here for completeness.

\begin{prop}\label{prop_homoclinic2} Let $\Psi^g$ and $\Psi^l$ be the global and local symplectomorphisms defined in \eqref{psig} and \eqref{psil}, respectively. Let $C^u_{\tau_0}$ and $C^s_{\tau_1}$ be the intersections of $W^u_{\rm loc}(P_{2,E})$ and $W^s_{\rm loc}(P_{2,E})$ with $D_{\tau_0}$ and $D_{\tau_1}$ respectively. Then there exists $N\in \N^*$ so that $(\Psi^g \circ \Psi^l)^{N-1} \circ \Psi^g|_{C^u_{\tau_0}}$ is well-defined and
$$(\Psi^g \circ \Psi^l)^{N-1} \circ \Psi^g(C^u_{\tau_0}) \cap C^s_{\tau_1}\neq \emptyset.$$ \end{prop}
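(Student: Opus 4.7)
The plan is to argue by contradiction, exploiting that $\Psi^g$ and $\Psi^l$ are symplectomorphisms together with the fact that the leaves of the $2-3$ foliation carry finite $\omega_0$-area. Suppose no such $N$ exists; then for every $n \geq 1$ for which the iteration is defined, the curve
\[
C_n := (\Psi^g \circ \Psi^l)^{n-1} \circ \Psi^g(C^u_{\tau_0}) \subset D_{\tau_1}
\]
is a simple closed curve disjoint from $C^s_{\tau_1}$, and in order for the iteration to extend by one more step $C_n$ must lie in the open region $D_{\tau_1} \setminus \overline{B^s_{\tau_1}}$, which is the domain of $\Psi^l$. Each such $C_n$ bounds a topological disk $B_n \subset D_{\tau_1}$ obtained by tracking the image of $B^u_{\tau_0}$ under the corresponding iterated composition, and by symplecticity one has $\int_{B_n} \omega_0 = A := \int_{B^u_{\tau_0}} \omega_0 > 0$ for every admissible $n$.

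I would then invoke the finite total $\omega_0$-area of $D_{\tau_1}$---a consequence of the pseudo-holomorphic nature of the leaves of the $2-3$ foliation near the binding $P_{3,E}$, cf.\ \cite{PS,convex}---to conclude, via a Poincar\'e-recurrence argument applied to the area-preserving first-return map $\Psi^g \circ \Psi^l$ on $D_{\tau_1} \setminus \overline{B^s_{\tau_1}}$, that two distinct iterates $B_n$ and $B_m$ must have overlapping interiors. Retracing trajectories through this overlap produces two points of $C^u_{\tau_0}$ whose forward orbits coincide after finitely many returns; since both orbits lie on the unstable manifold $W^u(P_{2,E})$ and distinct trajectories of the flow cannot agree, such a coincidence must be witnessed by a crossing of some intermediate curve $C_k$ with $C^s_{\tau_1}$, contradicting the standing assumption. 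This is essentially the standard argument given in \cite{BGS,fols}.

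The main obstacle I anticipate is upgrading an overlap of the \emph{interior} disks $B_n$ (which alone would only produce a recurrent or periodic orbit of $\Psi^g \circ \Psi^l$) to an actual intersection of the \emph{boundary} curve $C_k$ with $C^s_{\tau_1}$. This requires carefully tracking the topological position of $C_n$ relative to $B^s_{\tau_1}$ at each step---it may nest $B^s_{\tau_1}$, be nested inside it, or be disjoint from it---and exploiting the fact that $C^u_{\tau_0}$ lies on $W^u(P_{2,E})$ so that any identified forward-orbit coincidence must come from a point of $W^u \cap W^s$, forcing a crossing of the boundary curves rather than a mere interior overlap.
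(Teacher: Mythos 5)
Your strategy is in the right direction---it rests on area preservation, the equal positive areas of the successive disks, and the finiteness of the symplectic area of $D_{\tau_1}$---but you have set the argument up by contradiction in a way that manufactures a difficulty, and the difficulty you correctly anticipate at the end is not resolved. You want to invoke a recurrence-type argument to force two disks $B_n$ and $B_m$ to overlap, and then derive a contradiction from the overlap; but you then observe (rightly) that an interior overlap does not obviously produce a crossing of some $C_k$ with $C^s_{\tau_1}$, and your proposed remedy is flawed: the points in the interior overlap are \emph{not} on $W^u(P_{2,E})$ (the interior of $B^u_{\tau_0}$ consists of transit points that have just entered $S_E$ through $U_{1,E}$, not of points on the local unstable manifold), so the ``distinct trajectories cannot agree'' reasoning does not get off the ground.

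The paper's proof sidesteps this entirely by establishing \emph{disjointness} of the successive disks directly, which removes the need for any recurrence argument or any ``upgrade'' from interior overlap to boundary crossing. The key observation is that $\Psi^l$ maps $D_{\tau_1}\setminus B^s_{\tau_1}$ into $D_{\tau_0}\setminus B^u_{\tau_0}$, and therefore the image of $\Psi := \Psi^g\circ\Psi^l$ lies in $\Psi^g(D_{\tau_0}\setminus B^u_{\tau_0}) = D_{\tau_1}\setminus B^u_1$, where $B^u_1 = \Psi^g(B^u_{\tau_0})$. Hence $B^u_1$ is disjoint from the image of $\Psi$; together with injectivity of $\Psi$ this makes all the disks $B^u_j = \Psi^{j-1}(B^u_1)$ pairwise disjoint (an intersection $B^u_j \cap B^u_k \neq \emptyset$ with $j>k$ would force $\Psi^{j-k}(B^u_1)\cap B^u_1\neq\emptyset$, impossible). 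Once the $B^u_j$ are known to be pairwise disjoint with equal area $T_{2,E}>0$ inside the finite-area region $D_{\tau_1}\setminus B^s_{\tau_1}$, the process terminates after finitely many steps by pure counting, which is the desired $N$. If you incorporate the observation that ${\rm im}\,\Psi$ avoids $B^u_1$ into your argument, your overlap can never happen, and the rest of your sketch collapses to the paper's direct counting argument; as written, however, the crucial step from overlap to contradiction is missing.
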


\begin{proof}If $\Psi^g(C^u_{\tau_0}) \cap C^s_{\tau_1}\neq \emptyset$ then $N=1$ and the proof is finished. Otherwise $\Psi^g(C^u_{\tau_0})$ bounds a closed disk $B^u_1:=\Psi^g(B^u_{\tau_0})$ which is necessarily contained in $D_{\tau_1}\setminus B^s_{\tau_1}$. This follows from the fact that $\Psi^g$ preserves an area form and both disks $B^u_{\tau_0}$ and $B^s_{\tau_1}$ have the same area $T_{2,E}$. In this case $\Psi^g \circ \Psi^l|_{B^u_1}$ is well-defined and we define $B^u_2:=\Psi^g \circ \Psi^l(B^u_1)$. If $\partial B^u_2 \cap C^s_{\tau_1}\neq \emptyset$, then $N=2$ and the proof is finished. Otherwise, the closed disk $B^u_2$, which also has symplectic area $T_{2,E}$, is contained in $D_{\tau_1}\setminus B^s_{\tau_1}$ and must be disjoint from $B^u_1$ (note that $\Psi^l(B^u_1)$ is contained in $D_{\tau_0}\setminus B^u_{\tau_0}$). In this case  $\Psi^g \circ \Psi^l|_{B^u_2}$ is well-defined and we define $B^u_3:=\Psi^g \circ \Psi^l(B^u_2)$. Arguing inductively, if $\partial B^u_{j}\cap C^s_{\tau_1} \neq \emptyset$, then $N=j$ and the proof is finished. Otherwise,  the closed disk $B^u_j$, which also has symplectic area $T_{2,E}$, is contained in $D_{\tau_1}\setminus (B^s_{\tau_1} \cup (\cup_{1\leq k< j} B^u_{k}))$. In this case  $\Psi^g \circ \Psi^l|_{B^u_j}$ is well-defined and we define $B^u_{j+1}:=\Psi^g \circ \Psi^l(B^u_j)$. Since the symplectic area of $D_{\tau_1}\setminus B^s_{\tau_1}$ is finite and all $B^u_j$ are disjoint and have the same symplectic area $T_{2,E}>0$, this process has to terminate after finitely many steps, i.e., there exists $N\in \N$ so that $\Psi^g \circ \Psi^l|_{B^u_{N-1}}$ is well-defined and $B^u_N:= \Psi^g \circ \Psi^l(B^u_{N-1})$ intersects $B^s_{\tau_1}$.  In particular, $(\Psi^g \circ \Psi^l)^{N-1} \circ \Psi^g|_{C^u_{\tau_0}}$ is well-defined and $(\Psi^g \circ \Psi^l)^{N-1} \circ \Psi^g(C^u_{\tau_0}) \cap C^s_{\tau_1}\neq \emptyset,$ finishing the proof of the proposition. \end{proof}

Let $N\in\N$ be as in Proposition \ref{prop_homoclinic2}. From now on we use the notations
$$\begin{aligned}
B^u_N & := (\Psi^g \circ \Psi^l)^{N-1} \circ \Psi^g(B^u_{\tau_0}) \subset D_{\tau_1}\\
C^u_N & := \partial B^u_N=(\Psi^g \circ \Psi^l)^{N-1} \circ \Psi^g(C^u_{\tau_0}) \subset D_{\tau_1}.
\end{aligned}$$
 We know that $C^u_N \cap C_{\tau_1}^s \neq \emptyset$ and we consider two distinct situations: either $C^u_N\neq C_{\tau_1}^s$ or $C^u_N=C_{\tau_1}^s$.

In the first case where $C^u_N\neq C_{\tau_1}^s$, we shall prove that the Hamiltonian flow on $\dot S_E$ admits infinitely many transverse homoclinic orbits to $P_{2,E}$ and such transversality implies positivity of the topological entropy. In particular, we obtain infinitely many periodic orbits in $\dot S_E$.  This is discussed in Section \ref{sec_naocoincide}.

In the second case where $C^u_N=C_{\tau_1}^s$, we immediately obtain infinitely many homoclinic orbits to $P_{2,E}$ in $\dot S_E$. Moreover,
the Hamiltonian flow defines a symplectomorphism $\Psi^N: \A \to \A$, where
 $\Psi:=\Psi^g \circ \Psi^l$ and $\A$ consists of $D_{\tau_1}$  with finitely many disjoint closed disks removed
$$\A:= D_{\tau_1} \setminus \bigcup_{j=0}^{N-1} \Psi^{-j}\left(B^s_{\tau_1}\right).$$
Such map  describes the dynamics on an invariant open subset  $\U_\A \subset \dot S_E$ given by the trajectories in $\dot S_E$ intersecting $\A$. Moreover, $\Psi$ has an infinite twist near the inner boundary components of $\A$. Using results on area preserving homeomorphisms of the open annulus due to J. Franks, we shall derive infinitely many periodic points of $\Psi$, which correspond to infinitely many periodic orbits in $\dot S_E$. This is proved in Section \ref{sec_coincide}.

\section{Infinite twist of the local transition map} \label{sec_twist}

In this section we construct local models for neighborhoods of $C^s_{\tau_1} \subset D_{\tau_1}$ and $C^u_{\tau_0}\subset D_{\tau_0}$, for a fixed $E>0$ sufficiently small. We find suitable coordinates in order to describe the infinite twist of the local transition map $\Psi^l$ near $C^s_{\tau_1}$.

Take $\delta>0$ small. Consider the real-analytic open annuli $R^s_\delta, R^u_\delta \subset  K^{-1}(E)$ given in Moser's coordinates $(q_1,q_2,p_1,p_2)$ by
\begin{equation}\label{R}
\begin{aligned}
R^s_\delta&:=\left\{(q_1,q_2,p_1,p_2):q_1=\delta,-\frac{\delta}{2}< p_1< \frac{\delta}{2}\right\}\cap K^{-1}(E),\\
R^u_\delta&:=\left\{(q_1,q_2,p_1,p_2):p_1=\delta,-\frac{\delta}{2}< q_1< \frac{\delta}{2}\right\} \cap K^{-1}(E),
\end{aligned}
\end{equation}
and let $A^s_\delta \subset R^s_\delta$ and $A^u_\delta \subset R^u_\delta$ be defined by
\begin{equation}\label{secoes}
\begin{aligned}
A^s_\delta&:=\left\{(q_1,q_2,p_1,p_2):q_1=\delta,0< p_1< \frac{\delta}{2}\right\} \cap K^{-1}(E),\\
A^u_\delta&:=\left\{(q_1,q_2,p_1,p_2):p_1=\delta,0< q_1< \frac{\delta}{2}\right\} \cap K^{-1}(E).
\end{aligned}
\end{equation}
See Figure \ref{fig_secoes}. Both $R_\delta^s$ and $R_\delta^u$ are transverse to the Hamiltonian vector field $X_K$.
Observe that $A^s_\delta$ and $A^u_\delta$ correspond to non-transit trajectories of the Hamiltonian flow, i.e., those which do not cross
the separating 2-sphere $\{q_1+p_1=0\} \cap K^{-1}(E)$. In fact, $A^s_\delta$ is mapped onto $A^u_\delta$ by the local Hamiltonian flow.

Since $\partial_{I_1}\bar K(0,0)=-\alpha\neq 0$, it follows from the implicit function theorem that there exists a real-analytic function $f$ defined in a neighborhood of $(0,0)\in \R^2$ such that
\begin{equation}\label{I1}
I_1=f(I_2,E)=\frac{\omega}{\alpha}I_2 -\frac{E}{\alpha} +O(I_2^2+E^2)  \mbox{ with } \bar K(f(I_2,E),I_2)=E.
\end{equation}
For the sake of simplicity we  omit the dependence of $f$ on $E$ since $E>0$ small is fixed.

Let $0<I_2^\sharp<I_2^c<I_2^*$ be so that $$f(I_2^\sharp)=-\frac{\delta^2}{2}, f(I_2^c)=0 \mbox{ e } f(I_2^*)=\frac{\delta^2}{2}.$$
Taking $\delta>0$ sufficiently small, we see that the local stable manifold $W^s_{\rm loc}(P_{2,E})$ intersects the annulus $R_\delta^s$ along the real-analytic circle
\begin{equation}\label{Ss}
S^s_\delta:=\{q_1=\delta,p_1=0, q_2^2+p_2^2=2I_2^c\}
\end{equation}
and the local unstable manifold $W^u_{\rm loc}(P_{2,E})$ intersects the annulus $R_\delta^u$ on the real-analytic circle
\begin{equation}\label{Su}
S^u_\delta:=\{q_1=0,p_1=\delta, q_2^2+p_2^2=2I_2^c\}.
\end{equation}
See Figure \ref{fig_secoes}. 

\begin{figure}[ht!]
  \centering
  \includegraphics[width=0.4\textwidth]{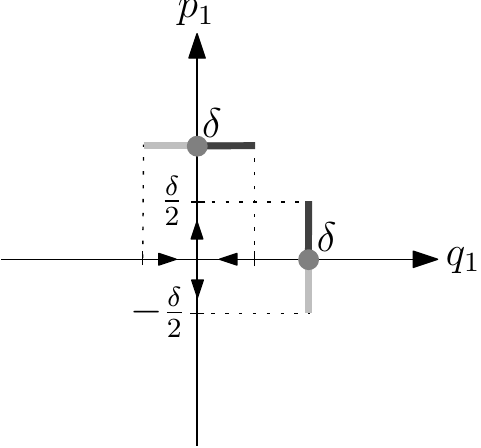}
  \caption{The transverse sections $R^s_\delta$ and $R_\delta^u$, defined in \eqref{R}, projected to the plane $(q_1,p_1)$. Each dark gray line represents the projection of one of the annuli $A^s_\delta$ and $A_\delta^u$, defined in \eqref{secoes}, and each gray dot represents the projection of one of the circles $S^s_\delta$ and $S_\delta^u$, given in \eqref{Ss} and \eqref{Su}, respectively.}
  \label{fig_secoes}
\hfill
\end{figure}

From now on it will be convenient to consider angle-action coordinates $(\theta,I_2)$ instead of the coordinates $(q_2,p_2)$, which are defined by the following relations:
$$q_2=\sqrt{2I_2} \cos\theta \mbox{ and } p_2=\sqrt{2I_2} \sin\theta.$$
In such coordinates, the circles $S^s_\delta$ and $S^u_\delta$ correspond to
\begin{equation}\label{Sc}
S_{c}:=\left\{(\theta,I_2) \in (\R/2\pi \Z)\times \R^+:\, I_2=I_2^c\right\},
\end{equation}
the open annuli $A^s_\delta$ and $A_\delta^u$ correspond to
\begin{equation}\label{Ac*}
A_{c}^{*}:=\left\{(\theta,I_2) \in (\R/2\pi \Z)\times \R^+:\, I_2^c< I_2<I_2^*\right\}
\end{equation}
and the open annuli $R^s_\delta$ and $R_\delta^u$ correspond to
\begin{equation}\label{Asust*}
A_{\sharp}^{*}:=\left\{(\theta,I_2) \in (\R/2\pi \Z)\times \R^+:\, I_2^\sharp < I_2 <I_2^*\right\}.
\end{equation}

We introduce the following real-analytic symplectic charts
\begin{equation}\label{eq_fisudelta0}\varphi_\delta^s: A_\sharp^*\to R^s_\delta \mbox{ and } \varphi_\delta^u : A_\sharp^*\to R^u_\delta\end{equation}
given by
\begin{equation}\label{eq_fisudelta}
\begin{aligned}
\varphi_\delta^s(\theta,I_2)=&\left(\delta, \sqrt{2I_2} \cos\theta,\frac{f(I_2)}{\delta}, \sqrt{2I_2} \sin\theta\right),\\
\varphi_\delta^u(\theta,I_2)=&\left(\frac{f(I_2)}{\delta}, \sqrt{2I_2} \cos\theta,\delta, \sqrt{2I_2} \sin\theta\right),
\end{aligned}
\end{equation}
which satisfy $\varphi^s_\delta(S_c)=S^s_\delta$, $\varphi^s_\delta(A_c^*)=A^s_\delta$, $\varphi^u_\delta(S_c)=S^u_\delta$ and $\varphi^u_\delta(A_c^*)=A^u_\delta$.
As a result, we obtain the real-analytic local model $(A_\sharp^*,A_c^*,S_c)$ for both $(R^s_\delta, A^s_\delta, S^s_\delta)$ and $(R^u_\delta, A^u_\delta, S^u_\delta)$. Notice that $(\varphi_\delta^u)^*\omega_0= (\varphi_\delta^s)^* \omega_0= d\theta \wedge dI_2$, where $\omega_0 = dp_1 \wedge dq_1 + dp_2 \wedge dq_2$.

Since $R^u_\delta, R^s_\delta, D_{\tau_0}$ and $D_{\tau_1}$ are transverse to the Hamiltonian vector field, one may use the positive Hamiltonian flow to obtain symplectomorphisms
\begin{equation}\label{eq_deltasu}
\psi^s_{\tau_1,\delta}: V_{\tau_1}^s \to R_\delta^s\mbox{ and }\psi^u_{\delta,\tau_0}: R_\delta^u \to V_{\tau_0}^u
\end{equation}
for $\delta>0$ sufficiently small, where $V^s_{\tau_1} \subset D_{\tau_1}$ and $V^u_{\tau_0} \subset D_{\tau_0}$ are suitable annular neighborhoods of the circles $C_{\tau_1}^s\subset D_{\tau_1}$ and $C_{\tau_0}^u\subset D_{\tau_0}$, respectively, such that $\psi^s_{\tau_1,\delta}(C_{\tau_1}^s)=S_\delta^s$ and $\psi^u_{\delta,\tau_0}(S_\delta^u)=C_{\tau_0}^u$.
Denoting
\begin{equation}\label{N}
N_{\tau_1}^s:=V_{\tau_1}^s \setminus B_{\tau_1}^s \mbox{ and } N_{\tau_0}^u:=V_{\tau_0}^u \setminus B_{\tau_0}^u,
\end{equation}
we observe that $\psi^s_{\tau_1,\delta}(N_{\tau_1}^s)=A_\delta^s$ and $\psi^u_{\delta,\tau_0}(A_\delta^u)=N_{\tau_0}^u$.

Using the maps defined in \eqref{eq_fisudelta0} and \eqref{eq_deltasu}, we can consider from now on the angle-action coordinates $(\theta,I_2) \in A_\sharp^*$ on both annular neighborhoods $V_{\tau_1}^s\subset D_{\tau_1}$ and $V_{\tau_0}^u\subset D_{\tau_0}$, in such a way that $\psi_{\tau_1,\delta}^s$ and $\psi_{\delta,\tau_0}^u$ correspond to the identity maps in coordinates $(\theta,I_2)$.
All this construction 
provides a local model $(A_\sharp^*,A_c^*,S_c)$ for both $(V^s_{\tau_1}, N^s_{\tau_1},$ $C^s_{\tau_1})$ and $(V^u_{\tau_0}, N^u_{\tau_0}, C^u_{\tau_0})$. The importance of this local model is that the transition maps induced by the Hamiltonian flow are real-analytic.


Now we give a description of the local transition map $\Psi^l: D_{\tau_1} \setminus B^s_{\tau_1} \to D_{\tau_0}\setminus B^u_{\tau_0}$, defined in \eqref{psil}, near the inner boundary circles $C^s_{\tau_1} = \partial B^s_{\tau_1}$ and $C^u_{\tau_0} = \partial B^u_{\tau_0}$,  making use of the angle-action coordinates $(\theta,I_2)\in A_c^*$ on both $N_{\tau_1}^s\subset D_{\tau_1}$ and $N_{\tau_0}^u\subset D_{\tau_0}$.

By observing the behavior of the flow in local coordinates $(q_1,q_2,p_1,p_2)$, see Figure \ref{fig_selacentro}, we notice that positive trajectories starting at $A^s_\delta$ must hit $A_\delta^u$. Since these solutions satisfy \eqref{eqK}, one can compute the time that an orbit starting at $A_\delta^s$ takes to reach $A_\delta^u$. This is given in angle-action coordinates by
$$T(\theta,I_2)=t(I_2):=-\frac{1}{\bar\alpha(I_2)} \ln \frac{f(I_2)}{\delta^2},$$
where \begin{equation}\label{alphabar}\bar\alpha(I_2)=\alpha-\partial_{I_1}R(f(I_2),I_2).\end{equation}
Note that $f(I_2)$ is positive on $A_c^*$ and,  as $I_2 \to (I_2^c)^+$, $\bar\alpha(I_2)$ converges to $\alpha-\partial_{I_1}R(0,I_2^c)>0$ and $t(I_2)$ converges to $+\infty$.

The restriction $\Psi^l: N_{\tau_1}^s \to N_{\tau_0}^u$ of the local transition map is represented by a map $l:A_\delta^s \to A_\delta^u$, which in coordinates $(\theta,I_2)\in A_c^*$ is written as
\begin{equation}\label{local}
\begin{aligned}
l &: A_c^* \to A_c^*\\
l(\theta,I_2)&= (\theta+\Delta \theta(I_2), I_2),
\end{aligned}
\end{equation}
where the variation $\Delta \theta$ is given by
\begin{equation}\label{variacao}
\Delta \theta(I_2)=-\bar\omega(I_2)\,t(I_2)=\frac{\bar\omega(I_2)}{\bar\alpha(I_2)} \ln \frac{f(I_2)}{\delta^2}.
\end{equation}
Here \begin{equation}\label{omegabar}\bar\omega(I_2)=\omega+\partial_{I_2}R(f(I_2),I_2),\end{equation} which converges to $\omega+\partial_{I_2}R(0,I_2^c)$ $>0$ as $I_2\to (I_2^c)^+$. Notice that $l$ preserves $I_2$.

Since $f$ is real-analytic and has a simple zero  at $I_2^c>0$, one can find a real-analytic function $\eta$ satisfying
\begin{equation}\label{eta}
f(I_2)=(I_2-I_2^c)\eta(I_2) \mbox{ with } \eta(I_2^c)\neq 0.
\end{equation}
From \eqref{variacao} and \eqref{eta} we obtain the following estimate for $\Delta\theta$
\begin{equation}\label{variacaolambda}
\Delta \theta(I_2)=\frac{\bar \omega(I_2)}{\bar \alpha(I_2)} \ln (I_2-I_2^c)+\Lambda(I_2),
\end{equation}
where
\begin{equation}\label{Lambda}\Lambda(I_2)=\frac{\bar\omega(I_2)}{\bar\alpha(I_2)}\ln \frac{\eta(I_2)}{\delta^2}\end{equation} is a real-analytic function near $I_2^c$. Since $f(I_2)>0$ on $A_c^*$, we have that $\eta(I_2)>0$ on this domain.

Now we use  \eqref{variacaolambda} in order to prove that the local transition map $l$ has a monotone infinite twist near the inner boundary component $S_c$ of the annulus $A_c^*$.

\begin{lem}\label{twist}
Let $\gamma:[0,1) \to A_c^* \cup S_c$ be a real-analytic curve such that $\gamma(0) \in S_c$ and $\gamma(t) \in A_c^*$ for all $t \in (0,1)$. Then, for all $\epsilon>0$ sufficiently small, the curve $\{l\circ\gamma(t): t\in (0,\epsilon)\}$ is a real-analytic spiral turning monotonically around $S_c$ in the clockwise direction and accumulating on $S_c$  as $t\to 0^+$. See Figure \ref{fig_espiral}.
\end{lem}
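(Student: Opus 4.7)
The plan is to differentiate the angular component of $l\circ\gamma(t)$ and extract a dominant $1/t$ singularity from the logarithmic term in \eqref{variacaolambda}, using the real-analytic structure of $\gamma$ in a fundamental way.

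First, write $\gamma(t)=(\theta(t),I_2(t))$. Real-analyticity together with $I_2(0)=I_2^c$ and $I_2(t)>I_2^c$ for $t\in(0,1)$ implies that $I_2-I_2^c$ has a finite order of vanishing at $0$, so
$$I_2(t)-I_2^c=t^k h(t),\qquad k\geq 1,\ h\text{ real-analytic},\ h(0)>0,$$
the positivity of $h(0)$ being forced by the sign condition on $I_2(t)-I_2^c$ for small $t>0$.  In particular, $I_2(t)\in(I_2^c,I_2^*)$ for $t\in(0,\epsilon)$ with $\epsilon>0$ sufficiently small, so $l\circ\gamma$ is well defined and real-analytic as a curve in $A_c^*$ on $(0,\epsilon)$, with $I_2$-coordinate converging to $I_2^c$ as $t\to 0^+$. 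This already shows that the image is a real-analytic arc accumulating on $S_c$.

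Next, differentiate the angular coordinate $\Theta(t):=\theta(t)+\Delta\theta(I_2(t))$. From \eqref{variacaolambda},
$$\Delta\theta'(I_2)=\frac{\bar\omega(I_2)}{\bar\alpha(I_2)}\cdot\frac{1}{I_2-I_2^c}+\left(\frac{\bar\omega}{\bar\alpha}\right)'(I_2)\ln(I_2-I_2^c)+\Lambda'(I_2).$$
Substituting $I_2(t)-I_2^c=t^k h(t)$ and $I_2'(t)=t^{k-1}(kh(t)+th'(t))$, and using that $\bar\omega(I_2(t))/\bar\alpha(I_2(t))\to \bar\omega(I_2^c)/\bar\alpha(I_2^c)>0$, the dominant contribution becomes
$$\Delta\theta'(I_2(t))\,I_2'(t)=\frac{k\,\bar\omega(I_2^c)}{\bar\alpha(I_2^c)}\cdot\frac{1}{t}+O(t^{k-1}\ln t)+O(1),$$
while $\theta'(t)$ stays bounded near $t=0$. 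Hence $\Theta'(t)\to+\infty$ as $t\to 0^+$, and in particular $\Theta'(t)>0$ and $\Theta(t)\to-\infty$ on a sufficiently small interval $(0,\epsilon)$.

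Finally, strict monotonicity of $\Theta$ on $(0,\epsilon)$ together with $\Theta(t)\to-\infty$ shows that $l\circ\gamma(t)$ executes infinitely many full rotations in the direction of decreasing $\theta$, which is the clockwise direction in the $(\theta,I_2)$-annulus; combined with $I_2(t)\to I_2^c$ and the analyticity already noted, this yields the real-analytic clockwise spiral claimed. The main subtlety is that the order of vanishing $k$ may exceed $1$, but the estimate above isolates the $1/t$-behavior independently of $k$, so neither the monotonicity nor the direction of winding is affected: they are governed uniquely by the positive ratio $\bar\omega(I_2^c)/\bar\alpha(I_2^c)$ inherited from the normal form \eqref{eq_normalform}.
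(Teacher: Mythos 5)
Your argument is correct and essentially identical to the paper's: real-analyticity gives $I_2(t)-I_2^c=c_0t^n+O(t^{n+1})$ with $n\geq1$, $c_0>0$, and the logarithmic term in $\Delta\theta$ contributes a dominant $c/t$ singularity to $\frac{d}{dt}\left[\theta(t)+\Delta\theta(I_2(t))\right]$, forcing monotone clockwise winding with angle diverging to $-\infty$ as $t\to0^+$. The only expository nit is that $\Theta(t)\to-\infty$ follows from the non-integrable $c/t$ lower bound rather than merely from $\Theta'(t)\to+\infty$; since you do establish that $1/t$ bound explicitly, the substance is sound and coincides with the paper's proof.
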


\begin{figure}[h!!]
  \centering
  \includegraphics[width=0.35\textwidth]{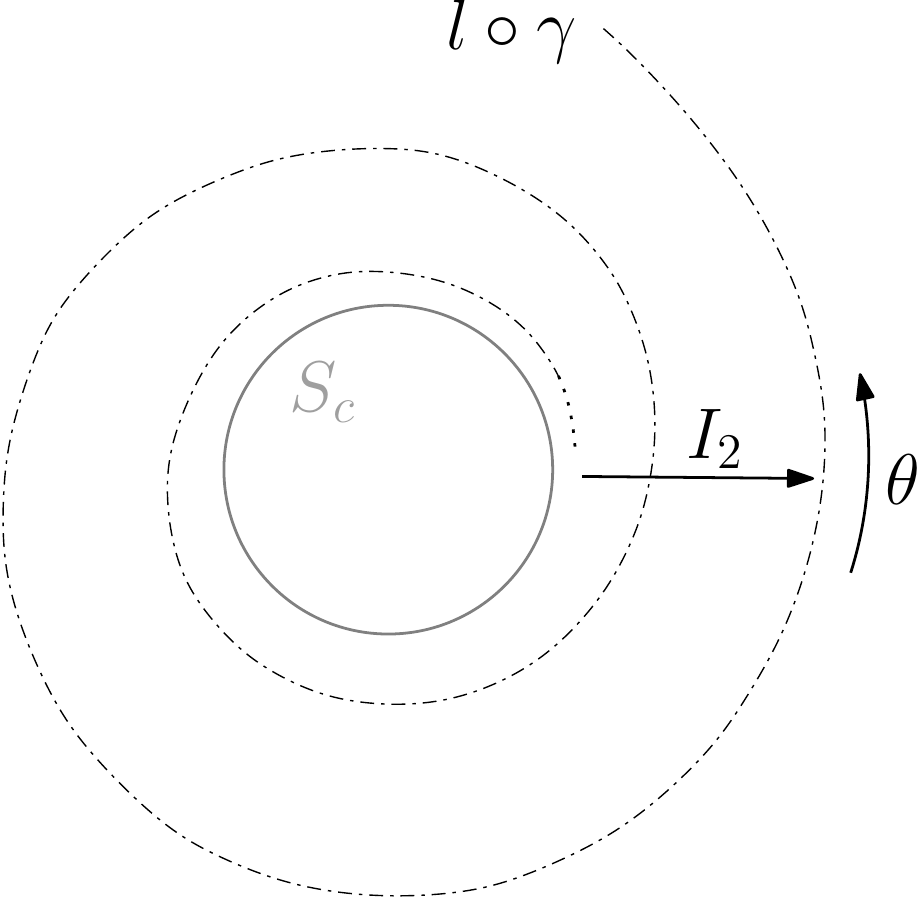}
  \caption{Infinite twist of the local transition map $l$ near $S_c$.}
  \label{fig_espiral}
\hfill
\end{figure}

\begin{proof}
The point $\gamma(0)\in S_c$ is represented by $(\theta_0, I_2^c)$ and the curve $\gamma$ can be written as $\gamma(t)=(\theta(t),I_2(t))$, where $\theta$ and
$I_2$
are real-analytic functions satisfying $\theta(0)=\theta_0$ and $I_2(0)=I_2^c$. Since $\gamma$ is real-analytic, $\gamma(0) \in S_c$ and $\gamma(t) \in A_c^*$ for all $t \in (0,1)$, we can find $\epsilon_0>0$ such that  the restriction of $I_2$ to the interval $[0,\epsilon_0)$ is a strictly increasing function. In fact, since $I_2(t)$ is real-analytic, we have $$I_2(t)-I_2^c = c_0t^n +O(t^{n+1})$$ for some $c_0>0$, $n\in \N^*$ and all $t\geq 0$ small. Hence $$I_2'(t) =nc_0t^{n-1} +O(t^n).$$

 On the other hand, \eqref{variacaolambda} asserts that the variation $\Delta \theta$ on $\{\gamma(t), t\in (0,\epsilon_0)\}$ is given by
$$\Delta \theta(I_2(t))=\frac{\bar\omega(I_2(t))}{\bar\alpha(I_2(t))} \ln (I_2(t)-I_2^c)+\Lambda(I_2(t)),$$
where $\Lambda(I_2(t))$ is a real-analytic function near $t=0$. Hence
$$\begin{aligned}
\frac{d}{dt}[\theta(t)+\Delta \theta(I_2(t))] & = \theta'(t)+\frac{d}{dt}\left[\frac{\bar\omega(I_2(t))}{\bar\alpha(I_2(t))}\right] \ln (I_2(t)-I_2^c)\\
& +\frac{\bar\omega(I_2(t))}{\bar\alpha(I_2(t))} \frac{I_2'(t)}{I_2(t)-I_2^c}+ \Lambda'(I_2(t))I_2'(t).
\end{aligned}$$
Since $\theta$ and $\Lambda$ are both real-analytic near $t=0$ and since $\frac{\bar \omega(I_2^c)}{\bar \alpha(I_2^c)}>0$, we can find $0<\epsilon_1<\epsilon_0$  and a constant $c>0$ so that
$$
\frac{d}{dt}[\theta(t)+\Delta \theta(I_2(t))] > O(\ln t) + \frac{2c}{n}\frac{nc_0t^{n-1} +O(t^n)}{c_0t^n+O(t^{n+1})} > c\frac{1}{t}>0
$$
for all $t \in (0,\epsilon_1)$.

We conclude that $\theta(t)+ \Delta \theta(I_2(t))$ decreases monotonically to $-\infty$ as $t \to 0^+$ and, therefore, the curve
$$l\circ \gamma(t)=(\theta(t)+ \Delta \theta(I_2(t)), I_2(t)), t \in (0,\epsilon_1),$$ is a spiral that turns monotonically in the clockwise direction around $S_c$ and accumulates on $S_c$ as $t \to 0^+$.
\end{proof}

\section{Positive topological entropy}\label{sec_naocoincide}

We keep using the notations established in Sections \ref{sec_existenciahomoc} and \ref{sec_twist}.
Let $\Psi^g$ and $\Psi^l$ be the global and local symplectomorphisms defined in \eqref{psig} and \eqref{psil}, respectively. According to Proposition \ref{prop_homoclinic2}, we can find $N\in\N$ such that $C_N^u \cap C^s_{\tau_1}\neq \emptyset$, where $C_N^u=(\Psi^g \circ \Psi^l)^{N-1} \circ \Psi^g(C^u_{\tau_0})$, $C^u_{\tau_0}$ and $C^s_{\tau_1}$ are the intersections of $W^u_{\rm loc}(P_{2,E})$ and $W^s_{\rm loc}(P_{2,E})$ with $D_{\tau_0}$ and $D_{\tau_1}$, respectively, and $0<\tau_0<\tau_1<1$, with $\tau_0$ near $0$ and $\tau_1$ near $1$.


In this section we assume that the branches of the invariant manifolds $W^u(P_{2,E})$ and $W^s(P_{2,E})$ inside $S_E$ do not coincide. In particular, this implies that $C^u_N$ and $C^s_{\tau_1}$ do not coincide as well. In this case, we shall prove that even if $C^u_N$ and $C^s_{\tau_1}$ do not intersect transversely, we may consider higher iterates of the map $\Psi^g \circ \Psi^l$ in order to find infinitely many transverse intersections of $W^u(P_{2,E})$ and $W^s(P_{2,E})$ in $S_E$. As a consequence of these transverse intersections, we obtain infinitely many periodic orbits and infinitely many homoclinics to $P_{2,E}$ inside $S_E$. Moreover, the flow on  $H^{-1}(E)$ has positive topological entropy. Such dynamical properties follow from the existence of an invariant subset $\Lambda_E\subset S_E$ so that the flow restricted to $\Lambda_E$ semi-conjugates to a Bernoulli shift of infinite type, see \cite{BGS} and \cite{moser2}.


As we have seen in Section \ref{sec_twist}, $(A_\sharp^*, A_c^*, S_c)$ is a local model for $(V^s_{\tau_1}, N^s_{\tau_1}, C^s_{\tau_1})$ and $(V^u_{\tau_0}, N^u_{\tau_0}, C^u_{\tau_0})$, where $A_\sharp^*$ is the annular neighborhood of $S_c$ defined in \eqref{Asust*}, with the circle $S_c$ defined in \eqref{Sc} being the inner boundary circle of the annulus $A_c^*$ given in  \eqref{Ac*}, and $V^s_{\tau_1} \subset D_{\tau_1}$, $V^u_{\tau_0}\subset D_{\tau_0}$ are, respectively, neighborhoods of the circles $C^s_{\tau_1}$ and $C^u_{\tau_0}$, which  are the inner boundary circles of the annuli $N_{\tau_1}^s$ and $N_{\tau_0}^u$ given in \eqref{N}, respectively.

Since the Hamiltonian function $H$ and Moser's coordinates are real-analytic, $C^u_N \cap V^s_{\tau_1}$ is represented by a real-analytic curve $\sigma_N^u \subset A_\sharp^*$ (possibly with many connected components) and  homoclinic points in $C^u_N \cap C^s_{\tau_1}$ correspond to intersection points in $\sigma_N^u \cap S_c$.

We are assuming that $C^u_N$ and $C^s_{\tau_1}$ intersect each other but do not coincide. It follows that each point in $\sigma_N^u \cap S_c$ is isolated since $\sigma_N^u$ and $S_c$ are real-analytic curves. Moreover, since the disks $B_N^u$ and $B^s_{\tau_1}$ have the same symplectic area $T_{2,E}>0$, we conclude that $C_N^u \cap N_{\tau_1}^s \neq \emptyset$ and, therefore, $\sigma_N^u \cap A_c^* \neq \emptyset$.

\begin{definition}\label{ext}
We say that $p\in\sigma_N^u \cap S_c$ is a special homoclinic point if one of the following properties holds:
 \begin{itemize}
 \item[(i)]  the connected component $\gamma_p$ of $\sigma_N^u$ containing  $p$ crosses the circle $S_c$ at $p$. This means that there are points in $\gamma_p$ arbitrarily close to $p$ satisfying $I_2 < I_2^c$ and points arbitrarily close to $p$ satisfying $I_2 > I_2^c$.
\item[(ii)] $\sigma_N^u$ does not contain points satisfying $I_2 < I_2^c$.
 \end{itemize}
\end{definition}
\begin{figure}[ht!]
  \centering
  \includegraphics[width=0.75\textwidth]{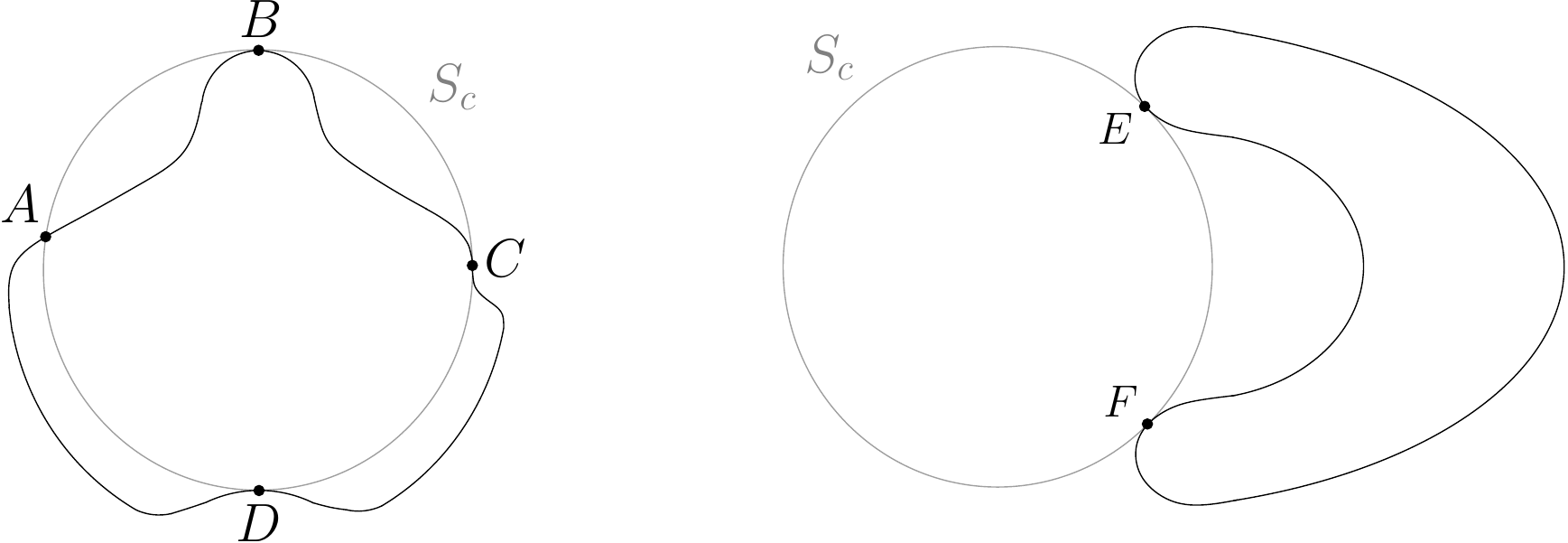}
  \caption{In this figure $A$, $C$, $E$ and $F$ are special homoclinic points: $A$ and $C$ are crossing points as in Definition \ref{ext} {\it (i)}, and $E$ and $F$ are special as in Definition \ref{ext} {\it (ii)}. The points $B$ and $D$ are not special homoclinic points.}
  \label{fig_special}
\hfill
\end{figure}
We can always find a special homoclinic point $p\in \sigma_N^u \cap S_c$. In fact, if $(B_N^u \setminus C^u_N) \cap (B^s_{\tau_1} \setminus C^s_{\tau_1}) \neq \emptyset$ then a crossing point as in (i) must exist. This follows from the fact that $B^u_N$ and $B^s_{\tau_1}$ do not coincide and have the same symplectic area $T_{2,E}>0$. Otherwise, if $(B_N^u \setminus C^u_N) \cap (B^s_{\tau_1} \setminus C^s_{\tau_1})=\emptyset$ then a special point as in (ii) must exist. See Figure \ref{fig_special}.

Observe that since $\sigma_N^u$ and $S_c$ are real-analytic curves, either $\sigma_N^u$ and $S_c$ cross each other transversely at $p$
or $\sigma_N^u$ meets $S_c$ tangentially at $p$ with contact of finite order.

The dynamics near a transverse homoclinic point is very rich and rather complicated. In particular, it forces the existence of infinitely many homoclinics  and infinitely many periodic orbits. This is well known since Poincar\'e \cite{poincare}, see also Smale \cite{smale}.

In the following we prove that any neighborhood of a special homoclinic point (in the sense of Definition \ref{ext}) contains infinitely many transverse homoclinic points, as outlined by Conley\footnote{Churchill and Rod also addressed this question in a more general setup, see \cite[Theorem 1.1, Remark 1.8 (b) and (c)]{CR}.} in \cite{conley}.
More specifically, we use the real-analyticity of the flow and the infinite twist of the local transition map in order to show that a special homoclinic point in $\sigma_N^u \cap S_c$ implies the existence of infinitely many  transverse intersections between $(g \circ l)^N(\sigma_N^u \cap A_c^*)$ and $S_c$, where $g$ and $l$ are local representations of the global and the local transition maps $\Psi^g$ and $\Psi^l$, respectively, in angle-action coordinates $(\theta,I_2)$. For a more precise expression of $g$ in coordinates $(x,y)=(\theta, I_2 -I_2^c)$, see the proof of Lemma \ref{lem_global} below.


Consider a special homoclinic point $p \in \sigma_N^u \cap S_c$.
Let us denote by $\gamma_u \subset A_c^* \cup S_c$ a real-analytic arc contained in $\sigma_N^u$ which has one end point in $p \in S_c$ and such that $\dot\gamma_u:=\gamma_u \setminus \{p\}$ is contained in $A_c^*$. Observe that
$$\gamma_u \subset F^{N-1} \circ g(S_c),$$ where $F:=g \circ l$.

Now let $\gamma_s \subset  A_c^* \cup S_c$ be a real-analytic arc with an end point $q \in S_c$ such that $\gamma_s \setminus \{q\}$ is contained in $A_c^*$. Assume that $\gamma_s$ satisfies 
$$F^{N-1} \circ g(\gamma_s)\subset S_c \mbox{ and } F^{N-1} \circ g(q)=p.$$ The hypothesis of $p$ being a special homoclinic point implies that the arcs $\gamma_u$ and $\gamma_s$ exist. 

Using Lemma \ref{twist} we see that $l(\dot\gamma_u)$ is a real-analytic spiral turning monotonically around $S_c$ in the clockwise direction  and accumulating on $S_c$. Therefore, $l(\dot\gamma_u)$ intersects $\gamma_s$ infinitely many times. Note that each of these intersections corresponds to a homoclinic orbit to $P_{2,E}$. Now we prove that such intersections are transverse near $S_c$.

The points $p\in \gamma_u \cap S_c$ and $q\in \gamma_s \cap S_c$ are represented in angle-action coordinates by $(\vartheta_p, I_2^c)$ and $(\vartheta_q, I_2^c)$. We can assume that $\vartheta_p=\vartheta_q=0$ without loss of generality.
Since $p$ is an isolated intersection point, we can write
$$\gamma_u=\{(\vartheta_{\gamma_u}(I_2),I_2): I_2 \in [I_2^c,I_2^c+\epsilon)\}$$
for some $\epsilon>0$ small, where $\vartheta_{\gamma_u}$ is a real-analytic function in $(I_2^c,I_2^c+\epsilon)$ satisfying $\vartheta_{\gamma_u}(I_2^c)=0$. As mentioned before either $\sigma_N^u$ and $S_c$ cross each other transversely at $p$ or they meet tangentially at $p$ with contact of finite order.
Then either $\vartheta_{\gamma_u}\equiv 0$ or $\vartheta_{\gamma_u}$ can be written as
\begin{equation}\label{vartetagamau}
\vartheta_{\gamma_u}(I_2)=(I_2-I_2^c)^n \beta(I_2),
\end{equation}
where $\beta$ is a real-analytic function satisfying $\beta(I_2^c) \neq 0$, $a\geq 0$ and $n>0$ is either an integer in case the intersection is transverse or $n=\frac{1}{k}$ for an integer $k>0$ in case the intersection is tangential with contact of finite order.

As we have seen in Section \ref{sec_twist}, the local transition map $l: A_c^* \to A_c^*$ is defined by $l(\theta,I_2)=(\theta+\Delta\theta(I_2),I_2)$, where the variation $\Delta\theta(I_2)$ is given in \eqref{variacaolambda}. Thus, the real-analytic curve $l(\dot\gamma_u)$ assumes the following form
$$l(\dot\gamma_u)=\{(\vartheta_{l(\dot \gamma_u)}(I_2):=\vartheta_{\gamma_u}(I_2)+\Delta\theta(I_2), I_2): I_2 \in (I_2^c,I_2^c+\epsilon)\}.$$
From \eqref{variacaolambda} and \eqref{vartetagamau} we obtain, for $I_2-I_2^c>0$ small,
$$\vartheta_{l(\dot \gamma_u)}(I_2)=(I_2-I_2^c)^n \beta(I_2)+\frac{\bar \omega(I_2)}{\bar \alpha(I_2)} \ln (I_2-I_2^c)+\Lambda(I_2),$$
where $\Lambda(I_2)$ is a real-analytic function near $I_2^c$. It follows that
$$\begin{aligned} \frac{d\vartheta_{l(\dot \gamma_u)}}{dI_2}(I_2) & =n(I_2-I_2^c)^{n-1}\beta(I_2)+(I_2-I_2^c)^n\beta'(I_2)\\ & +\frac{\bar \omega(I_2)}{\bar \alpha(I_2)} \frac{1}{I_2-I_2^c}+\partial_{I_2}\!\left(\frac{\bar \omega(I_2)}{\bar \alpha(I_2)}\right)\ln (I_2 - I_2^c)+\Lambda'(I_2).\end{aligned}$$
Hence, in both cases $n \in \N^*$ or $n=\frac{1}{k}$ with $k \in \N^*$, we  find a  constant $c_1>0$ so that \begin{equation}\label{c1}\frac{d\vartheta_{l(\gamma_u)}}{dI_2}(I_2)>c_1 \frac{1}{I_2-I_2^c}\end{equation} for all $I_2- I_2^c>0$ sufficiently small. Inequality \eqref{c1} also holds in the case $\vartheta_{\gamma_u} \equiv 0$.

Now let $\vartheta_{\gamma_s}(I_2)$ be a function (real-analytic in $(I_2^c,I_2^c+\epsilon)$) determined by $\gamma_s$ as follows
$$\gamma_s=\{(\vartheta_{\gamma_s}(I_2), I_2): I_2 \in [I_2^c,I_2^c+\epsilon)\},$$
where $\vartheta_{\gamma_s}(I_2^c)=0$. As before, either $\vartheta_{\gamma_s}\equiv 0$ or $\vartheta_{\gamma_s}$ can be written as
\begin{equation}\label{vartetagamas}
\vartheta_{\gamma_s}(I_2)=(I_2-I_2^c)^m \varsigma(I_2),
\end{equation}
where $\varsigma$ is a real-analytic function satisfying $\varsigma(I_2^c) \neq 0$ and $m>0$ is either an integer or assumes the form $m=\frac{1}{j}$ for some integer $j>1$, depending on the way $\gamma_s$ meets $S_c$ at the point $q$. In both cases we can choose  $\lambda \in (0,1)$ so that
\begin{equation}\label{eq2}\frac{d\vartheta_{\gamma_s}}{dI_2}(I_2)=m(I_2-I_2^c)^{m-1}\varsigma(I_2)+(I_2-I_2^c)^m \varsigma'(I_2)<c_1\frac{1}{(I_2-I_2^c)^{1-\lambda}},\end{equation}
for all $I_2-I_2^c>0$ small. Inequality above also holds in the case $\vartheta_{\gamma_s}\equiv 0$.

Using \eqref{c1} and \eqref{eq2} we conclude $$\frac{d\vartheta_{l(\dot \gamma_u)}}{dI_2}(I_2)>\frac{d\vartheta_{\gamma_s}}{dI_2}(I_2)$$
for all $I_2>I_2^c$ near $I_2^c$, proving that $l(\dot\gamma_u)$ intersects $\gamma_s$ transversely at infinitely many points approaching $(\theta, I_2)=(0, I_2^c)$. See Figure \ref{fig_inter_transversal}.

\begin{figure}[ht!]
  \centering
  \includegraphics[width=0.5\textwidth]{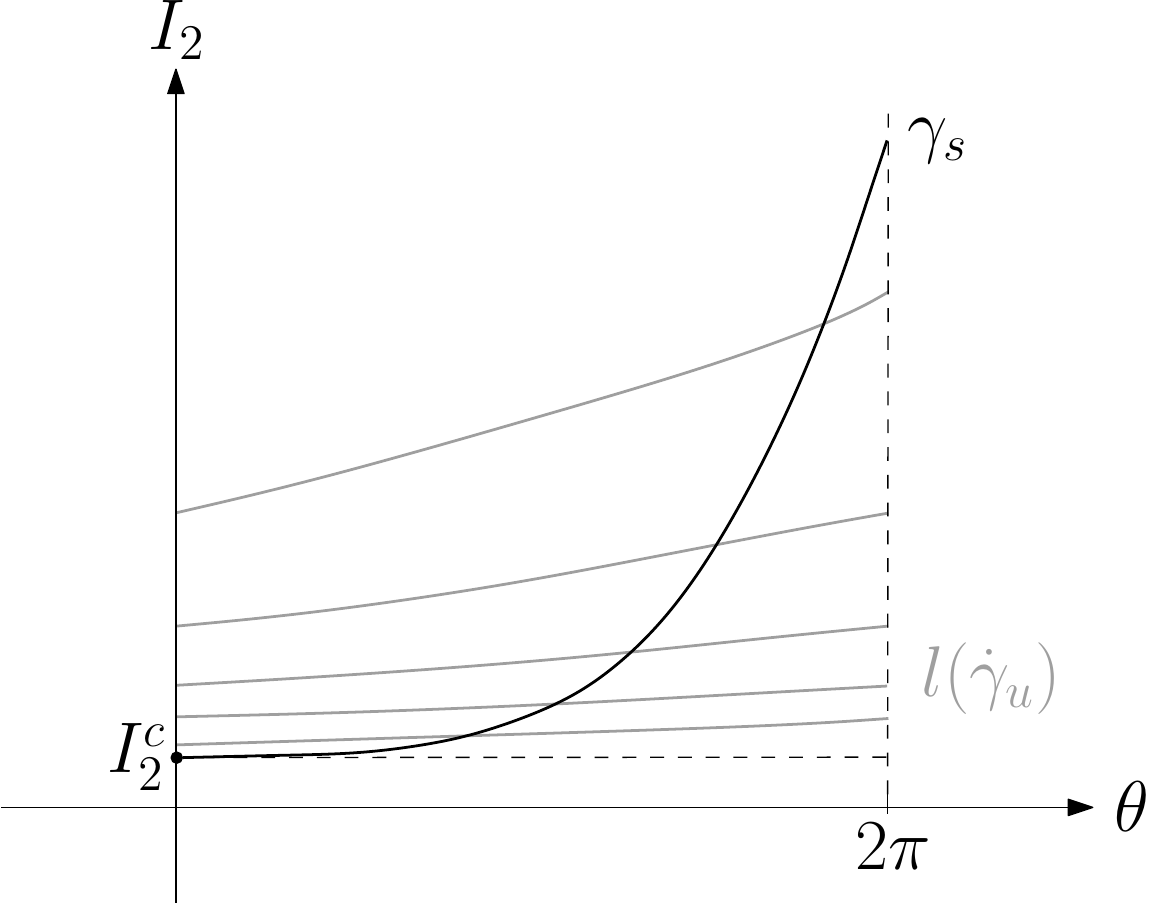}
  \caption{Representation of the curves $l(\dot\gamma_u)$ and $\gamma_s$ in angle-action coordinates $(\theta, I_2)$. }
  \label{fig_inter_transversal}
\hfill
\end{figure}

Transverse intersection points between $l(\dot\gamma_u)$ and $\gamma_s$ correspond to transverse homoclinic orbits to $P_{2,E}$. Therefore we have proved that $P_{2,E}$ admits infinitely many transverse homoclinic orbits in $\dot S_E$. It is well known that the existence of a transverse homoclinic orbit implies infinitely many nearby periodic orbits and positivity of topological entropy. See \cite[Chapter III]{moser2} for a discussion of this result and other interesting dynamical properties implied by the transverse homoclinic orbits such as the existence of an invariant subset of $\dot S_E$ so that the flow restricted to it is semi-conjugated to a Bernoulli shift with infinitely many symbols.


\section{Area preserving homeomorphisms of the open annulus}\label{sec_coincide}

Now we deal with the case where the branches of $W^s(P_{2,E})$ and $W^u(P_{2,E})$ inside $S_E$ coincide. In this situation the circles $C^u_N=\Psi^{N-1} \circ \Psi^g(C^u_{\tau_0})$ and $C^s_{\tau_1}$ coincide in $D_{\tau_1}$, where $N\in\N^*$ is as in Proposition \ref{prop_homoclinic2}. Once again we follow notations established in Sections \ref{sec_existenciahomoc} and \ref{sec_twist}.
It immediately follows that $P_{2,E}$ admits infinitely many homoclinics in $\dot S_E$. Moreover, in this case the Hamiltonian flow defines a symplectomorphism $\Psi: \A \to \A$, where $\Psi:=\Psi^g \circ \Psi^l$ and $\A$ consists of $D_{\tau_1}$ with $N$ disjoint closed disks removed
$$\A:= D_{\tau_1} \setminus \bigcup_{j=0}^{N-1} \Psi^{-j}\left(B^s_{\tau_1}\right).$$
Note that the assumption $C^u_N=C^s_{\tau_1}$ implies that $\Psi^{-j}\left(B^s_{\tau_1}\right)=\Psi^{N-1-j} \circ \Psi^g\left(B^u_{\tau_0}\right)$ for all $j=0, \ldots, N-1$. The map $\Psi$ describes the dynamics on an invariant open subset $\U_\A \subset \dot S_E$, which is given by the trajectories in $\dot S_E$ intersecting $\A$ transversely. Thus the Hamiltonian flow restricted to $\U_\A$ admits $\A$ as a global surface of section where $\Psi$ is the first return map. Observe that $\Psi$ preserves the outer boundary component of $\A$ (corresponding to the binding $P_{3,E}$), i.e., $\Psi$ maps points close to $P_{3,E}$ into points close to $P_{3,E}$. The inner boundary components of $\A$ are permuted by $\Psi$.

Using 
results due to J. Franks on area preserving homeomorphisms of the open annulus \cite{franks1, franks2,franks_erratum}, we shall prove that $\Psi:\A \to \A$ has infinitely many periodic points, which correspond to infinitely many periodic orbits of the Hamiltonian flow in $\dot S_E$.

We start by defining the following equivalence relation in $D_{\tau_1}$:
$$x \sim y \Leftrightarrow \exists\, j\in \{0,\ldots, N-1\} \mbox{ s.t. } x,y \in\Psi^{-j}\left(B^s_{\tau_1}\right).$$
In this way we obtain an open disk $\widetilde D_1:=D_{\tau_1}/_\sim$ whose topology is induced by the natural projection $\Pi: D_{\tau_1} \to \widetilde D_1$. Observe that $\Pi$ collapses the closed disks $\Psi^{-j}\left(B^s_{\tau_1}\right) \subset D_{\tau_1}$ into distinct points $p_j \in \widetilde D_1$, $j=0, \ldots, N-1$. Notice also that $\Pi|_\A$ is a bijection onto $\widetilde D_1 \setminus \{p_0,\ldots,p_{N-1}\}$ and hence $\Pi(\A)$ admits a natural  smooth structure so that $\Pi|_\A$ is a smooth diffeomorphism onto $\Pi(A)$.  Consider the finite area form $\omega_1$ on $\Pi(\A)=\widetilde D_1\setminus\{p_0,\ldots,p_{N-1}\}$ induced by $(\Pi|_\A)_* \omega_0$, where $\omega_0$ is the symplectic form on $D_{\tau_1}$ induced by the ambient symplectic structure.
Using $\Pi$ and the symplectomorphism $\Psi:\A \to \A$ we obtain an area preserving homeomorphism $\Phi: \widetilde D_1 \to \widetilde D_1$ defined on $\Pi(\A)$ by $\Phi = \Pi \circ \Psi \circ \Pi^{-1}$ and by declaring  that $\Phi(p_j)=p_{j-1}, \forall j=1,\ldots,N-1$ and $\Phi(p_0)=p_{N-1}$. The area form $\omega_1$ naturally extends to a finite area form on $\widetilde D_1$ which is $\Phi$-invariant.

Let us first assume that $N>1$. Note that $\Phi$ must have a fixed point $\bar p\not\in\{p_0,\ldots,p_{N-1}\}$ and $\Phi|_{\widetilde D_1 \setminus\{\bar p\}}$ is an area preserving homeomorphism of the open annulus (homotopic to the identity) with a periodic orbit $\{p_0,\ldots,p_{N-1}\}$. In this case we can directly apply  Franks' theorem in \cite{franks2} to conclude that $\Phi$ has infinitely many periodic points in $\widetilde D_1$ which implies that  the Hamiltonian flow admits infinitely many periodic orbits in $\U_\A \subset \dot S_E$.

Now let us assume $N=1$. In this case $\A$ is given by the open annulus $$D_{\tau_1} \setminus B^s_{\tau_1}\simeq (\R/ 2\pi \Z) \times (0,1)$$ and the symplectomorphism $\Psi=\Psi^g \circ \Psi^l: \A \to \A$ can be seen as a diffeomorphism of $(\R / 2\pi \Z) \times (0,1)$ preserving a finite area form.
Recall the local model $(A_c^*,S_c)$ for $(N^s_{\tau_1}, C^s_{\tau_1})$, where $S_c$ and $A_c^*$ are defined in \eqref{Sc} and \eqref{Ac*}, respectively, and $N^s_{\tau_1} \subset D_{\tau_1} \setminus B^s_{\tau_1}$ is defined in \eqref{N}.
In angle-action coordinates $(\theta,I_2)$, $S_c$ is represented by $\{I_2=I_2^c\}$, where $I_2^c>0$ is such that $f(I_2^c)=0$, see \eqref{I1}.
Observe that $(\theta,I_2-I_2^c)\in (\R/ 2\pi \Z) \times (0,\epsilon),$ with $\epsilon>0$ sufficiently small, can be used as coordinates in $\A$ near its inner boundary component $C^s_{\tau_1}$ via the identifications above.

Let $\pi: \R \times (0,1) \to  (\R/2\pi \Z)\times (0,1)$ be the universal covering map
\begin{equation}\label{eq_covmap}
\pi(x,y)=(x \mbox{ mod } 2\pi, y),\forall (x,y)\in \R \times (0,1).
\end{equation}
With the identification above, we have
$$(x,y)=(\theta,I_2-I_2^c), \forall (x,y)\in \R \times (0,\epsilon),$$
where $(\theta,I_2)$ are the angle-action coordinates and $\theta$ is now viewed as $\R$-valued.

Let $\widetilde\Psi: \R \times (0,1) \to \R \times (0,1)$ be a lift of $\Psi$ with respect to $\pi$. The infinite twist behavior of the local transition map proved in Lemma \ref{twist} implies that $\widetilde\Psi$ also has an infinite twist near the lower boundary component $\R\times \{0\}$, as we show in the following lemma.

\begin{lem}\label{lem_global}There exist $0<\epsilon'<\epsilon$ and real-analytic functions $\Theta_1(x),$ defined in $\R$, $H_1(x,y)$ and $H_2(x,y)>0$, defined in $\R \times (-\epsilon',\epsilon')$, all of them $2\pi$-periodic in $x$, such that, denoting $(\tilde x(x,y),\tilde y(x,y))=\widetilde \Psi(x,y)$, we have
\begin{equation}\label{xytil}
\begin{aligned}
\tilde x(x,y) & =x+\Delta(y) +\Theta_1(x+ \Delta(y))+yH_1\left(x+\Delta(y),y\right),\\
\tilde y(x,y) &=y H_2\left(x+\Delta(y),y\right),
\end{aligned}
\end{equation}
for all $(x,y)\in \R\times (0,\epsilon'),$ where
\begin{equation}\label{delta}
\Delta(y):=\frac{\bar \omega(y+I_2^c)}{\bar \alpha(y+I_2^c)} \ln y + \Lambda(y+I_2^c),
\end{equation} and $\bar \alpha, \bar \omega$ and $\Lambda$ are real-analytic functions defined in \eqref{alphabar}, \eqref{omegabar} and \eqref{Lambda}, respectively.
Moreover, $\widetilde \Psi$ satisfies the following infinite twist property near  $\R \times \{0\}:$ fixing a bounded subset $B\subset \R$, we have
$$\tilde x(x,y) \to -\infty \mbox{ and } \tilde y(x,y) \to 0^+, \mbox{ as } y \to 0^+,$$ uniformly for $x\in B$.\end{lem}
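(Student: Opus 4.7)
The plan is to decompose $\widetilde\Psi = \widetilde{\Psi^g} \circ \widetilde{\Psi^l}$ and read off each factor in the coordinates $(x,y) = (\theta, I_2 - I_2^c)$ near the inner boundary. Section \ref{sec_twist} already gives $\Psi^l$ in angle-action coordinates as $(\theta, I_2) \mapsto (\theta + \Delta\theta(I_2), I_2)$ with $\Delta\theta$ as in \eqref{variacaolambda}. Passing to $(x,y)$ and taking the natural lift, this yields $\widetilde{\Psi^l}(x,y) = (x + \Delta(y), y)$ with $\Delta$ as in \eqref{delta}, real-analytic on $\R \times (0, \epsilon')$ for $\epsilon' > 0$ small.

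The main step is to establish the form of a suitable lift $\tilde g$ of $\Psi^g$ in the $(x,y)$ charts. The charts $\varphi^u_\delta, \varphi^s_\delta$ of \eqref{eq_fisudelta} are real-analytic across $\{y=0\}$, the flow symplectomorphisms $\psi^u_{\delta, \tau_0}, \psi^s_{\tau_1, \delta}$ from \eqref{eq_deltasu} are real-analytic near the corresponding transverse sections, and $\Psi^g$ is induced by the Hamiltonian flow on a neighborhood of the whole disk $B^u_{\tau_0}$. Hence $\Psi^g$ is represented by a real-analytic area-preserving diffeomorphism $g$ on a full two-sided neighborhood of $\{y=0\}$ in $(\R/2\pi\Z) \times \R$. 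Since we are in the case $N = 1$ with $C^u_1 = \Psi^g(C^u_{\tau_0}) = C^s_{\tau_1}$, $g$ sends $\{y=0\}$ to $\{y=0\}$ and, being orientation-preserving, also sends $\{y>0\}$ to $\{y>0\}$ (the annular complement of the inner disk in both charts).

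I would then pick the degree one lift $\tilde g : \R \times (-\epsilon', \epsilon') \to \R^2$, satisfying $\tilde g_1(x+2\pi, y) = \tilde g_1(x,y) + 2\pi$ and $\tilde g_2(x+2\pi, y) = \tilde g_2(x, y)$. The identity $\tilde g_2(x, 0) \equiv 0$ factors as $\tilde g_2(x,y) = y H_2(x,y)$ with $H_2$ real-analytic and $2\pi$-periodic in $x$; together with $\det D\tilde g |_{y=0} = 1$ and the sign condition $\{y>0\} \mapsto \{y>0\}$, this forces $H_2(x,0) > 0$, and hence $H_2 > 0$ on $\R \times (-\epsilon', \epsilon')$ after shrinking $\epsilon'$. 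The $2\pi$-periodicity of $\tilde g_1(x,y) - x$ in $x$ lets me define the real-analytic $2\pi$-periodic function $\Theta_1(x) := \tilde g_1(x,0) - x$, and a Taylor expansion of $\tilde g_1 - (x + \Theta_1(x))$ in $y$ yields $\tilde g_1(x,y) = x + \Theta_1(x) + y H_1(x,y)$ with $H_1$ real-analytic and $2\pi$-periodic in $x$. Substituting into $\widetilde\Psi(x,y) = \tilde g(x + \Delta(y), y)$ produces \eqref{xytil}.

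The infinite twist assertion is then immediate: by \eqref{delta} and $\bar\omega(I_2^c)/\bar\alpha(I_2^c) > 0$ we have $\Delta(y) \to -\infty$ as $y \to 0^+$, while $\Theta_1, H_1, H_2$ are bounded by $2\pi$-periodicity and continuity. Hence $\tilde x(x,y) = x + \Delta(y) + \Theta_1(x + \Delta(y)) + y H_1(x + \Delta(y), y) \to -\infty$ uniformly on any bounded $B \subset \R$, and $\tilde y(x,y) = y H_2(x + \Delta(y), y) \to 0^+$ uniformly. The main obstacle will be verifying the orientation convention that gives $H_2 > 0$: this amounts to tracking how the annular side $\{y > 0\}$ in each chart matches the outward component of $\A$, but follows from the fact that the flow is orientation-preserving on the transverse sections and from the conventions set in \eqref{eq_fisudelta} and \eqref{eq_deltasu}.
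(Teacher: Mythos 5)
Your proposal is correct and follows essentially the same route as the paper: decompose $\widetilde\Psi = \tilde g \circ \tilde l$, use $N=1$ to see that $\tilde g$ fixes $\{y=0\}$, factor $\tilde g_2 = yH_2$, use the Jacobian determinant equal to $1$ (via $\partial_x Y|_{y=0}=0$) to get $H_2(x,0)\neq 0$, use orientation preservation for the sign, and Taylor-expand $\tilde g_1$ to extract $\Theta_1$ and $H_1$. The only cosmetic difference is that the paper writes $Y = y^k H_2$ and then argues $k=1$, whereas you factor out a single $y$ directly and conclude nonvanishing from the determinant — the content is identical.
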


\begin{proof} Recall that the local transition map $l$ is given by
$$l(\theta,I_2)=\left(\theta+\frac{\bar \omega(I_2)}{\bar \alpha(I_2)} \ln (I_2-I_2^c)+\Lambda(I_2), I_2\right),$$
where $\bar \alpha, \bar \omega$ and $\Lambda$ are real-analytic functions given by \eqref{alphabar}, \eqref{omegabar} and \eqref{Lambda}, respectively.

In the strip $\R \times (0,\epsilon)$, a point $(x,y)$ coincides with the angle-action coordinates $(\theta,I_2-I_2^c)$ and $\widetilde \Psi$ splits as $\widetilde \Psi = \tilde g \circ \tilde l,$ where $\tilde g$ and $\tilde l$ are lifts of the global and local transition maps, respectively,  with respect to the universal covering map $\pi$, see \eqref{eq_covmap}. In coordinates $(x,y),$ a choice of $\tilde l$ is written as
\begin{equation}\label{ltil}
\tilde l(x,y)=(x+\Delta(y),y), \forall (x,y) \in \R \times (0,\epsilon),
\end{equation}
with $\Delta(y)$ defined in \eqref{delta}.

Let us denote \begin{equation}\label{gtil}\tilde g(x,y)=(X(x,y),Y(x,y)),\end{equation} where $X$ and $Y$ are real-analytic functions defined in $\R \times (-\epsilon,\epsilon)$. Since we are assuming $N=1$, and therefore $\Psi^g(C^u_{\tau_0})=C^s_{\tau_1}$, the global transition map $g$ leaves the boundary component $S_c=\{I_2=I_2^c\}$ invariant. Then $Y(x,0)=0$ for all $x \in \R$ and there exists a real-analytic function $H_2: \R\times (-\epsilon,\epsilon) \to \R$, $2\pi$-periodic in $x$, so that $Y(x,y)=y^k H_2(x,y)$ for some integer $k>0$ and $H_2(x,0)\neq 0$ for all $x\in\R$.
We claim that $k=1$. In fact, since $\tilde g$ is an area preserving diffeomorphism, we have
\begin{equation}\label{det}
\det\left(
\begin{array}{cc}
\partial_x X & \partial_y X\\
\partial_x Y & \partial_y Y
\end{array}
\right)=1.
\end{equation}
Observe that $\partial_x Y(x,0)=0$ for all $x$. Since $\partial_y Y = ky^{k-1} H_2(x,y)+y^k\partial_y H_2(x,y)$ and $\partial_y Y(x,0)\neq 0,\forall x\in \R,$ we must have $k=1$.  Thus
\begin{equation}\label{Y}
Y(x,y)=y H_2(x,y), \forall (x,y)\in \R\times (-\epsilon,\epsilon).
\end{equation}
Using that $\partial_y Y(x,0)>0,$ we obtain $H_2(x,0)>0$ for all $x\in \R$.  This implies the existence of $0<\epsilon'<\epsilon$ so that $H_2(x,y)>0$ for all $(x,y)\in \R\times (-\epsilon',\epsilon')$.

We can take $\epsilon'>0$ even smaller so that $X(x,y)=X(x,0)+y H_1(x,y)$, where $H_1(x,y)$ is a real-analytic function defined in  $\R\times (-\epsilon',\epsilon')$, which is $2\pi$-periodic in $x$. Let $\Theta_1$ be the real-analytic function given by $\Theta_1(x)=X(x,0)-x,$ $\forall x\in\R$. Note that $\Theta_1$ is $2\pi$-periodic. Then we write
\begin{equation}\label{X}
X(x,y)=x+\Theta_1(x)+ yH_1(x,y), \forall (x,y)\in \R\times (-\epsilon',\epsilon').
\end{equation}

Now denoting $(\tilde x,\tilde y)=\tilde g \circ \tilde l$, we finally obtain \eqref{xytil} from \eqref{ltil}, \eqref{gtil}, \eqref{Y} and \eqref{X}. Since $\Delta(y) \to -\infty$ as $y \to 0^+$ and $\Theta_1(x), H_1(x,y)$ and $H_2(x,y)>0$ are bounded, we conclude from \eqref{xytil} that $\tilde x(x,y) \to -\infty$ and $\tilde y(x,y) \to 0^+$ as $y \to 0^+$ for all $x\in\R$. Clearly, this convergence is uniform in $x$, if $x$ lies in a bounded subset $B\subset \R$, since $\Theta_1$ and $H_1$ are bounded.  \end{proof}

For each $k\in\N$, consider the lift $\widetilde\Psi_k: \R \times (0,1) \to \R \times (0,1)$ of $\Psi$ given by
\begin{equation}\label{lift}
\widetilde\Psi_k(x,y)=\widetilde\Psi(x,y)+(2k\pi,0).
\end{equation}
We shall see that the infinite twist of $\widetilde\Psi$ near $\R\times\{0\}$ implies the existence of a fixed point for $\widetilde\Psi_k$ for each $k$ sufficiently large. In order to do that, we first recall the notion of returning disks introduced by J. Franks in \cite{franks1}.

\begin{definition}[J. Franks \cite{franks1}]
Let $f$ be a homeomorphism of the open annulus $A:=(\R / 2\pi \Z) \times (0,1)$, which is homotopic to the identity map. Let $\tilde f$ be a lift of $f$ to $\tilde A:=\R \times (0,1)$ with respect to the covering map $\pi$ defined in \eqref{eq_covmap}. We say that an open disk $U \subset \tilde A$ is a positively returning disk for $\tilde f$ if
\begin{itemize}
\item[(i)] $\tilde f(U) \cap U = \emptyset$,
\item[(ii)] there exists an integer $n>0$ so that $\tilde f^n(U) \cap (U+m) \neq \emptyset$ for some $m>0$, where $U+m:=\{(x+m,y): (x,y) \in U\}$.
\end{itemize}
Similarly, we define negatively returning disks for $\tilde f$ by requiring $m<0$ in \emph{(ii)}.
\end{definition}

The following result proved by Franks in \cite{franks1} (see also \cite{franks_erratum}) provides at least one fixed point for a lift of an area preserving homeomorphism of the open annulus containing both negatively and positively returning disks.

\begin{theo}[J. Franks \cite{franks1,franks_erratum}]\label{teo_franks}
Let $f: A \to A$ be a homeomorphism of the open annulus $A=(\R/ 2\pi \Z) \times (0,1)$ which is homotopic to the identity map and preserves a finite area form. Let $\tilde f: \tilde A \to \tilde A$ be a lift of $f$ to the universal covering space $\tilde A=\R \times (0,1)$ with respect to the covering map $\pi$ defined in \eqref{eq_covmap}.
Assume that $\tilde f$ admits both positively and negatively returning disks, which are lifts of disks in $A$.
Then $\tilde f$ has a fixed point.
\end{theo}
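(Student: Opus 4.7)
The plan is to argue by contradiction using the Brouwer plane translation theorem. Suppose $\tilde f:\tilde A\to\tilde A$ has no fixed point. Since $\tilde A=\R\times(0,1)$ is homeomorphic to the plane and $\tilde f$ is an orientation-preserving homeomorphism of it, Brouwer's theorem provides through every point $x$ an embedded arc from $x$ to $\tilde f(x)$ whose $\tilde f$-iterates are pairwise disjoint; as a well-known consequence, $\tilde f$ has no non-wandering points in $\tilde A$. The entire argument therefore reduces to producing at least one non-wandering point of $\tilde f$ in $\tilde A$, which the returning disks together with area preservation should yield.

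First I would apply Poincar\'e recurrence to the finite-area system $(A,f)$: the $f$-recurrent points are dense in $A$, and in particular inside $\bar U:=\pi(U)$ and $\bar V:=\pi(V)$. The quantitative content of the returning-disk hypotheses is that there exist integers $n_+,m_+>0$ and a point $z_+\in U$ with $\tilde f^{n_+}(z_+)\in U+m_+$, and symmetrically integers $n_-,m_->0$ and $z_-\in V$ with $\tilde f^{n_-}(z_-)\in V-m_-$. Composing with the deck translations, one extracts from these relations $\tilde f$-orbit segments that carry $U$ horizontally by any prescribed positive multiple of $m_+$, and $V$ by any prescribed negative multiple of $m_-$.

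The heart of the argument is to combine these opposite drifts with $A$-recurrence to build a non-wandering point of $\tilde f$ in $\tilde A$. Concatenating positively translating orbit segments coming from $U$ with negatively translating ones coming from $V$, and using $f$-recurrent orbits through $\bar U$ and $\bar V$ to close the gaps downstairs with arbitrarily small error, one produces, for every $\epsilon>0$, a pseudo-$\tilde f$-orbit in $\tilde A$ whose net horizontal displacement is zero and whose consecutive jumps have $\tilde A$-diameter less than $\epsilon$. Area preservation of $\tilde f$ then upgrades this pseudo-orbit into a genuine $\tilde f$-non-wandering point via the planar disk-chain construction of Franks \cite{franks1,franks_erratum}, contradicting the Brouwer translation theorem and hence the hypothesis that $\tilde f$ was fixed-point free.

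The main obstacle is exactly this last upgrade step. A typical $f$-recurrent point of $A$ lifts to a $\tilde f$-orbit that translates horizontally by a fixed nonzero integer each return, so $A$-recurrence does not automatically yield $\tilde A$-recurrence; without further input the lifted dynamics can be a pure drift and admit no non-wandering points. The simultaneous presence of returning disks of opposite signs is what breaks this obstruction: it forces the set of available asymptotic translation numbers along orbit segments to contain both a positive and a negative value, and an intermediate-value argument combined with area preservation on the cover produces orbit segments with vanishing asymptotic drift, whose limit set provides the required non-wandering point. Making this combinatorial/planar-topology step rigorous, in the fashion of \cite{franks1,franks_erratum}, is the technical core of the theorem.
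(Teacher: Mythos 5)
The paper does not give its own proof of this statement: right after Theorem~\ref{teo_franks} it says the proof is \emph{contained in} the proof of Theorem~2.1 of Franks \cite{franks1} (with the correction in \cite{franks_erratum}), and leaves it there. So there is no in-paper argument to compare against; what you have written is a sketch of Franks' proof, and it does match the broad strategy (Brouwer translation theorem on the plane, plus area preservation and the two returning disks to contradict fixed-point freeness). That much is in the right spirit.

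However, there is a genuine gap in the reduction you propose, and you point at the wrong missing ingredient. You reduce to producing a \emph{non-wandering point} of $\tilde f$ in $\tilde A$, and then claim that ``area preservation of $\tilde f$ upgrades this pseudo-orbit into a genuine $\tilde f$-non-wandering point.'' That step is not justified and, in the generality you need it, is false: closing up $\epsilon$-pseudo-orbits for every $\epsilon>0$ gives \emph{chain recurrence}, which does not imply non-wandering, and on a non-compact space like $\R\times(0,1)$ there is no a priori way to pass to a limit and extract a non-wandering point (the chain-recurrent base points may escape to the ends). Area preservation of $\tilde f$ on the cover also does not help directly, because the lifted area form is infinite. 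Franks' actual argument avoids this entirely: instead of producing a non-wandering point, he builds a \emph{periodic disk chain} from the two returning disks together with the density of $f$-recurrent points downstairs (Poincar\'e recurrence on $A$), and his key planar lemma -- a sharpening of the Brouwer translation theorem -- says a fixed-point-free orientation-preserving homeomorphism of the plane admits \emph{no} periodic disk chain. That lemma is strictly stronger than ``no non-wandering points,'' precisely because disk chains live at the level of chain recurrence. So the technical core you defer is not a routine ``upgrade to a non-wandering point''; it is this disk-chain version of Brouwer's theorem, which must be proved (or cited) on its own, and your outline as written would not close the argument. Relatedly, the intermediate claim that ``one extracts $\tilde f$-orbit segments that carry $U$ horizontally by any prescribed positive multiple of $m_+$'' is also not immediate from $\tilde f^{n_+}(U)\cap(U+m_+)\ne\emptyset$ alone; iterating this requires exactly the kind of recurrence/disk-chain bookkeeping that Franks sets up, which is another place the sketch glosses over a real step.
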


The proof of Theorem \ref{teo_franks} is contained in the proof of the more general Theorem 2.1 in \cite{franks1}, see also \cite{franks_erratum}.

 Proposition \ref{negpos} below asserts that $\Psi$ admits infinitely many fixed points. Its proof consists of showing that $\widetilde \Psi_k$ admits both positively and negatively returning disks, which are lifts of disks, for every $k>0$ sufficiently large. As a consequence of Theorem \ref{teo_franks}, we obtain a fixed point for $\widetilde \Psi_k$ for each $k$ sufficiently large. Observe that fixed points of $\widetilde \Psi_{k_1}$ and $\widetilde \Psi_{k_2}$, with $k_1\neq k_2$,   project into distinct fixed points of $\Psi$. Therefore, we derive infinitely many fixed points for $\Psi$.

\begin{prop}\label{negpos}
$\Psi$ has infinitely many fixed points.
\end{prop}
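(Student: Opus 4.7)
The plan is to invoke Theorem \ref{teo_franks} for each lift $\widetilde{\Psi}_k$ defined in \eqref{lift}, for all sufficiently large integers $k \geq 1$. Once we exhibit both a positively and a negatively returning disk for $\widetilde{\Psi}_k$ (each projecting injectively to $\A$), Franks' theorem yields a fixed point $\tilde{p}_k \in \R \times (0,1)$ of $\widetilde{\Psi}_k$, whose projection $p_k = \pi(\tilde{p}_k) \in \A$ is a fixed point of $\Psi$ corresponding to a periodic orbit of the Hamiltonian flow in $\U_\A \subset \dot{S}_E$. These periodic points are distinct for distinct values of $k$: if $q \in \A$ is a fixed point of $\Psi$ and $\tilde{q}$ is any lift, then $\widetilde{\Psi}(\tilde{q}) = \tilde{q} + (2\pi k_0, 0)$ for a unique integer $k_0 = k_0(q)$, so $\tilde{q}$ is a fixed point of exactly one of the maps $\widetilde{\Psi}_k$, namely $\widetilde{\Psi}_{-k_0}$. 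Hence we obtain infinitely many periodic orbits of the Hamiltonian flow in $\dot{S}_E$.

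For the negatively returning disk, use the infinite twist from Lemma \ref{lem_global}: for each fixed $k$, by choosing $y_0 > 0$ sufficiently small the $x$-displacement $\tilde{x}(x, y_0) + 2\pi k - x$ of $\widetilde{\Psi}_k$ can be made less than any prescribed negative constant, uniformly for $x$ in a fundamental domain such as $[-\pi, \pi]$. Pick a topological open disk $U$ centered near $(x_0, y_0)$ with $x$-diameter in some fixed range contained in $(0, 2\pi)$ (so that $U$ projects injectively to $\A$) and with $y$-diameter at least the discrepancy $|\tilde{y}(x_0, y_0) - y_0| = y_0 |H_2(\Delta(y_0), y_0) - 1|$, which is $O(y_0)$ by \eqref{xytil}. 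This is possible since the $y$-diameter constraint scales with $y_0$ while the $x$-diameter remains a fixed sub-$2\pi$ quantity. Then $\widetilde{\Psi}_k(U)$ lies far to the left of $U$, giving $\widetilde{\Psi}_k(U) \cap U = \emptyset$, and choosing $m$ to be the negative integer closest to $(\tilde{x}(x_0, y_0) + 2\pi k - x_0)/(2\pi)$ forces $\widetilde{\Psi}_k(U) \cap (U + 2\pi m) \neq \emptyset$.

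For the positively returning disk, use the behavior of $\Psi$ near the outer boundary of $\A$ corresponding to the binding $P_{3,E}$: since $\Psi$ extends continuously to this boundary and preserves it, the $x$-displacement of $\widetilde{\Psi}$ is bounded on a neighborhood of it. Taking a small topological disk $V$ in such a neighborhood and $k$ large enough, the image $\widetilde{\Psi}_k(V)$ is shifted to the right of $V$ by approximately $2\pi k + O(1)$; choosing the positive integer $m$ near $k$ makes $V + 2\pi m$ overlap $\widetilde{\Psi}_k(V)$, while $\widetilde{\Psi}_k(V) \cap V = \emptyset$ for $k$ large. The main technical obstacle is the construction of the negatively returning disk, where one must balance three constraints: (i) $y_0$ small enough that the twist overwhelms the $+2\pi k$ shift, (ii) $U$ wide enough in $y$ to bridge the $y$-discrepancy after one iterate, and (iii) $U$ narrow enough in $x$ to project injectively to $\A$. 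The order-$y_0$ estimate on $\tilde{y}(x_0, y_0) - y_0$ from \eqref{xytil} ensures that these constraints are simultaneously satisfiable for all sufficiently small $y_0$.
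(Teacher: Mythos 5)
Your overall strategy matches the paper's: apply Franks' theorem (Theorem \ref{teo_franks}) to the lifts $\widetilde\Psi_k$, deduce a fixed point of $\widetilde\Psi_k$ for each large $k$, and observe that fixed points belonging to different $k$ project to distinct fixed points of $\Psi$. Your negatively returning disk is also in the same spirit as the paper's, using the infinite twist of Lemma \ref{lem_global} near $\R\times\{0\}$; the paper simply takes the cleaner choice $Q^-_k=(0,2\pi)\times(0,\varepsilon_k)$, which touches the boundary $y=0$ and so absorbs the $O(y_0)$ vertical displacement automatically, without the bookkeeping on $y$-diameters you perform.

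The gap is in your construction of the positively returning disk. You place it near the \emph{outer} boundary component of $\A$, corresponding to the binding $P_{3,E}$, and assert that ``$\Psi$ extends continuously to this boundary and preserves it, [so] the $x$-displacement of $\widetilde\Psi$ is bounded on a neighborhood of it.'' Neither claim is established in the paper: Section 6 records only that $\Psi$ maps points close to $P_{3,E}$ to points close to $P_{3,E}$, which is strictly weaker than a continuous boundary extension and does not bound the angular displacement. (A map of the form $(\theta,r)\mapsto(\theta+1/(1-r),\,r)$ preserves small neighborhoods of $r=1$ yet has unbounded twist there.) Bounded twist at the binding of a page of an open book does hold in this context, but it is a nontrivial consequence of the exponential asymptotics of the leaf $D_{\tau_1}$ near the nondegenerate orbit $P_{3,E}$; it would require a separate argument or citation, which the proposal does not supply. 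The paper sidesteps the issue entirely: it takes $Q^+_\mu=(0,2\pi)\times(\mu,\epsilon'/2)$, bounded away from $y=0$ so that $\widetilde\Psi(Q^+_\mu)$ is a bounded set; a fixed intermediate point of $Q^+_\mu$ determines $l_0\in\Z$ with $\widetilde\Psi(Q^+_\mu)\cap(Q^+_\mu+2l_0\pi)\neq\emptyset$, and then the extra $+2\pi k$ of $\widetilde\Psi_k$ pushes $l_0+k>0$ while boundedness gives $\widetilde\Psi_k(Q^+_\mu)\cap Q^+_\mu=\emptyset$ for $k$ large. This construction lives entirely away from the outer boundary, so no information about $\Psi$ near $P_{3,E}$ is required.
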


\begin{proof}
We can assume that $\widetilde \Psi(x,y)=(\tilde x(x,y),\tilde y(x,y))$, $\forall (x,y)\in \R \times (0,\epsilon')$, where $\tilde x$, $\tilde y$ and $\epsilon'>0$ are given in Lemma \ref{lem_global}, see equation \eqref{xytil}.

Define the open disk $Q^+_\mu \subset \R \times (0,\epsilon')$ depending on a small parameter $0<\mu< \frac{\epsilon'}{2}$  as follows
$$Q^+_\mu := (0,2\pi) \times \left(\mu,  \frac{\epsilon'}{2}\right).$$
Using that $\tilde x(x,y) \to -\infty$ and $\tilde y(x,y)\to 0^+$ as $y\to 0^+$, for all fixed $x$, we find $l_0\in \Z,$ so that $\widetilde \Psi(Q^+_\mu) \cap (Q^+_\mu + 2l_0\pi) \neq \emptyset$ , for all $\mu>0$ sufficiently small. Fixing such a small $\mu>0$ and using that $\widetilde\Psi(Q^+_\mu)$ is a bounded subset of $\R \times (0,1)$, we obtain that $\widetilde\Psi_k(Q^+_\mu)$ does not intersect $Q^+_\mu$ for all $k\in\N$ sufficiently large. Now since $\widetilde\Psi_k(Q^+_\mu)$ intersects $Q^+_\mu +2(l_0+k)\pi$ we conclude that $Q^+_\mu$ is a positively returning disk  for $\widetilde\Psi_k$ for all $k\in \N$ large. See Figure \ref{fig_pos_returning}.

\begin{figure}[ht!]
  \centering
  \includegraphics[width=1\textwidth]{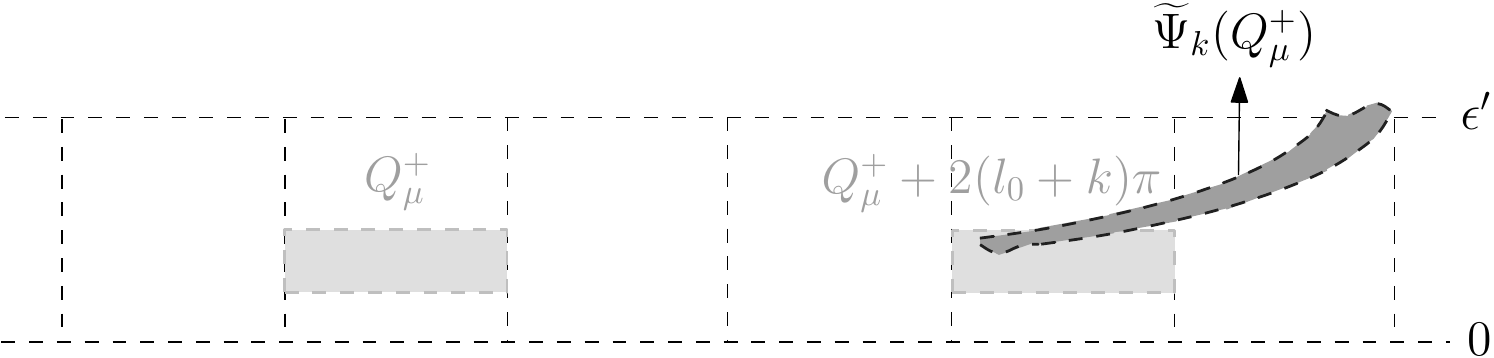}
  \caption{Positively returning disk $Q^+_\mu$ for $\widetilde\Psi_k$.}
  \label{fig_pos_returning}
\hfill
\end{figure}

Now for each fixed $k\in \N$, let $Q^-_{k}\subset \R \times (0, \epsilon')$ be the open disk given by
$$Q^-_{k} := (0,2\pi)\times (0,\varepsilon_k),$$
where $0<\varepsilon_k\ll \mu$ is to be determined below and $\mu$ is fixed as above. Using that $\tilde x(x,y) \to -\infty$ and $\tilde y(x,y)\to 0^+$ as $y\to 0^+$, uniformly in $x\in (0,2\pi)$, we obtain $\varepsilon_k>0$ small so that $\widetilde \Psi(Q^-_k) \cap (Q^-_k-2k\pi)= \emptyset$. Moreover, $\widetilde \Psi(Q^-_k) \cap (Q^-_k - 2l \pi)\neq \emptyset$ for all $l\in \N$ sufficiently large. Fixing one such  $\varepsilon_k>0$ small, we conclude that $\widetilde \Psi_k(Q^-_k) \cap Q^-_k = \emptyset$ and that $\widetilde \Psi_k(Q^-_k) \cap (Q^-_k - 2(l-k)\pi) \neq \emptyset$ for every large $l$. It follows that $Q^-_k$ is a negatively returning disk for $\widetilde \Psi_k$, see Figure \ref{fig_neg_returning}.

\begin{figure}[ht!]
  \centering
  \includegraphics[width=1\textwidth]{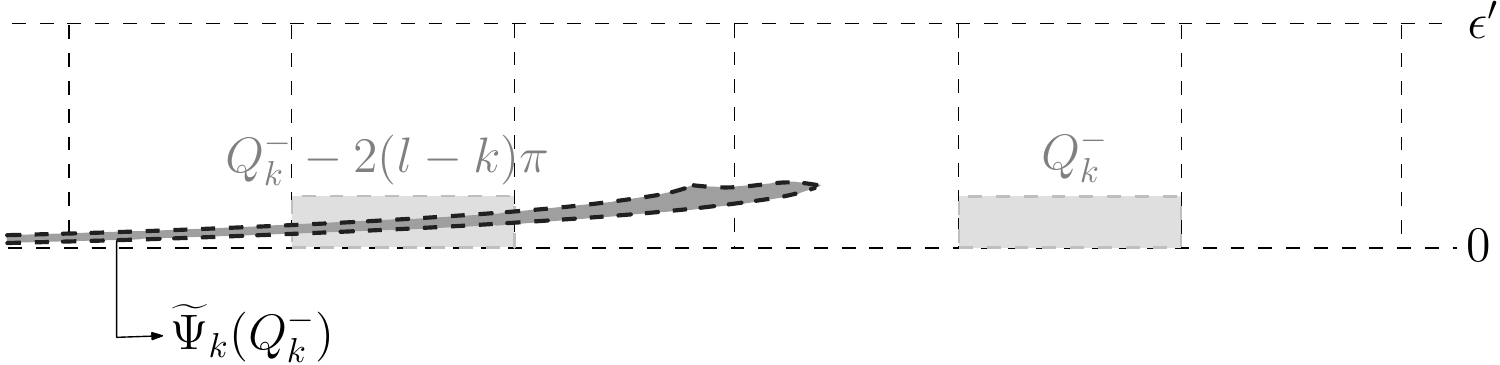}
	  \caption{Negatively returning disk $Q_k^-$ for $\widetilde\Psi_k$.}
  \label{fig_neg_returning}
\hfill
\end{figure}

We conclude that for all $k\in \N$ sufficiently large, $\widetilde\Psi_k$  admits both positively and  negatively returning disks in $\R\times (0,1)$, which are both lifts of disks in $(\R/2\pi \Z)\times (0,1)$. Theorem \ref{teo_franks} implies that $\widetilde\Psi_k$ has a fixed point $q_k \in \R \times (0,1)$. Clearly $\pi(q_k)\in (\R/2\pi \Z)\times (0,1)$ is a fixed point of $\Psi$. Using \eqref{lift} we see that $q_k$ is not a fixed point of $\widetilde\Psi_j$ if $j\neq k$ and hence $\pi(q_j)\neq \pi(q_k)$ if $j\neq k$. This implies that $\Psi$ has infinitely many fixed points. \end{proof}

Finally observe that each fixed point of the first return map $\Psi:\A \subset D_{\tau_1} \to \A$ obtained in Proposition \ref{negpos} corresponds to a periodic orbit of the Hamiltonian flow inside $\U_\A\subset \dot S_E$. Hence $\dot S_E$ contains infinitely many periodic orbits and infinitely many homoclinic orbits to $P_{2,E}$ in case the branches of $W^s_E(P_{2,E})$ and $W^u_E(P_{2,E})$ in $S_E$ coincide.



\vspace{.6cm}


\noindent {\it Acknowledgements.} NP was partially supported by FAPESP 2014/08113-1. PS was partially supported by CNPq 306106/2016-7 and by FAPESP 2016/25053-8.

\end{document}